\newtheorem{theorem}{Theorem}[section]
\newtheorem{lemma}[theorem]{Lemma}
\newtheorem{remark}[theorem]{Remark}
\def\@makefnmark{}
\begin{document}

\title[Extremal functions for Stein-Weiss inequalities on the heisenberg group]{Existence of extremal functions for the Stein-Weiss inequalities on the heisenberg group}

\author{Lu Chen}

\address{School of Mathematical and statistics, Beijing Institute of Technology, Beijing 100081, P. R. China}
\email{chenlu5818804@163.com}
\author{Guozhen Lu }
\address{Department of Mathematics\\
University of Connecticut\\
Storrs, CT 06269, USA}
\email{guozhen.lu@uconn.edu}

\author{Chunxia Tao}
\address{School  of Mathematical Sciences\\
 Beijing Normal  University\\
  Beijing 100875, China}
\email{taochunxia@mail.bnu.edu.cn}

\thanks{The first and third authors were partly supported by a grant from the NNSF of China (No.11371056), the second author was partly supported by a grant from the Simons foundation. Corresponding Authors: Guozhen Lu and Chunxia Tao.}

\begin{abstract}
In this paper, we establish the existence of extremals for two kinds of Stein-Weiss inequalities on the Heisenberg group. More precisely, we prove the existence of extremals for the  Stein-Weiss inequalities with full weights in Theorem \ref{thm1} and the  Stein-Weiss inequalities with horizontal weights in Theorem \ref{thm3}. Different from the proof of the analogous inequality in Euclidean spaces given by Lieb \cite{Lieb} using Riesz rearrangement inequality which is not available on the Heisenberg group, we employ the concentration compactness principle to obtain the existence of the maximizers on the Heisenberg group. Our result is also new even in the Euclidean case because we don't assume that the exponents of the double weights in the Stein-Weiss inequality (\ref{SW}) are both nonnegative  (see Theorem 1.3 and more generally Theorem 1.5).
Therefore, we extend Lieb's celebrated result of the existence of extremal functions of the Stein-Weiss inequality in the Euclidean space to the case where the exponents are not necessarily both nonnegative (see Theorem 1.3).  Furthermore, since the absence of translation invariance of the Stein-Weiss inequalities, additional difficulty presents and one cannot simply follow the same line of Lions' idea to obtain our desired result. Our methods can also be used to obtain the existence of optimizers for several other weighted integral inequalities (Theorem 1.5).
\end{abstract}

\maketitle {\small {\bf Keywords:}  Concentration compactness principle; Existence of extremal functions; Heisenberg group, Stein-Weiss inequalities.\\

\section{Introduction}
We recall the classical Stein-Weiss inequality on $\mathbb{R}^n$:
\begin{equation}\label{SW}
\int_{\mathbb{R}^n}\int_{\mathbb{R}^n}|x|^{-\alpha}|x-y|^{-\lambda} f(x)g(y)|y|^{-\beta} dxdy\leq C_{n,\alpha,\beta,p,q'}\|f\|_{L^{q'}(\mathbb{R}^n)}\|g\|_{L^p(\mathbb{R}^n)},
\end{equation}
where $p$, $q'$, $\alpha$, $\beta$ and $\lambda$ satisfy the following conditions,
$$\frac{1}{q'}+\frac{1}{p}+\frac{\alpha+\beta+\lambda}{n}=2,\ \ \frac{1}{q'}+\frac{1}{p}\geq 1,$$
$$1<p, q<\infty,\ \ \ \alpha+\beta\geq0,\ \ \alpha<\frac{n}{q},\ \ \beta<\frac{n}{p'},\ \ 0<\lambda<n.$$
Lieb \cite{Lieb} applied the rearrangement inequalities to establish the existence of extremals for inequality \eqref{SW} in the case $p<q$ and $\alpha, \beta \geq 0$. Furthermore, in the case of $p=q$,   the extremals can't be expected to exist (see Lieb \cite{Lieb} and also  Herbst \cite{Herbst} for the case $\lambda=n-1$, $p=q=2$, $\alpha=0$, $\beta=1$). In the case of $p=q$, Beckner \cite{B1,B2, B3} obtained the sharp constant of the Stein-Weiss inequalities \eqref{SW}. The precise estimate of the sharp constant of the Stein-Weiss inequalities for the case of $p\neq q$ was also established in \cite{B3}.

\vskip0.3cm

When $\alpha=\beta=0$, the Stein-Weiss inequality \eqref{SW} reduces to the following Hardy-Littlewood-Sobolev inequality (\cite{Hardy,Sobolev}),
\begin{equation}\label{HL1}
\int_{\mathbb{R}^n}\int_{\mathbb{R}^n}|x-y|^{-\lambda} f(x)g(y)dxdy\leq C_{n,p,q'}\|f\|_{L^{q'}(\mathbb{R}^n)}\|g\|_{L^p(\mathbb{R}^n)},
\end{equation}
where $1<q', p<\infty, 0<\lambda<n$ and $\frac{1}{q'}+\frac{1}{p}+\frac{\lambda}{n}=2$.
Lieb and Loss \cite{LiebLoss} used the layer cake representation to prove that the sharp constants $C_{n,p,q'}$ satisfy the following estimate
$$C_{n,p,q'}\leq \frac{n}{n-\lambda}\Big(\frac{\pi^{\frac{\lambda}{2}}}{\Gamma(1+\frac{n}{2})}\Big)^{\frac{\lambda}{n}}\frac{1}{q'p}
\Big(\big(\frac{\lambda q'}{n(q'-1)}\big)^{\frac{\lambda}{n}}+\big(\frac{\lambda p}{n(p-1)}\big)^{\frac{\lambda}{n}}\Big).$$
Furthermore, when $p=q'=\frac{2n}{2n-\lambda}$, Lieb \cite{Lieb} also gave the explicit formula of sharp constants and maximizers. More precisely,
\medskip

\noindent\textbf{Theorem A.}
For $1<q'$, $p<\infty$, $0<\lambda<n$ and $\frac{1}{q'}+\frac{1}{p}+\frac{\lambda}{n}=2$, there exists sharp constants $C_{n,p,q'}$ and maximizers of $f\in L^{q'}(\mathbb{R}^n)$ and $g\in L^p(\mathbb{R}^n)$, such that

\begin{equation}\label{EHL}
\int_{\mathbb{R}^n}\int_{\mathbb{R}^n} f(x)|x-y|^{-\lambda}g(y)dxdy\leq C_{n,p,q'}\|f\|_{L^{q'}(\mathbb{R}^n)}\|g\|_{L^p(\mathbb{R}^n)}.
\end{equation}
If $p=q'=\frac{2n}{2n-\lambda}$, then
$$C_{n,p,q'}=\pi^{\frac{\lambda}{n}}\frac{\Gamma(\frac{n}{2}-\lambda)}{\Gamma(n-\lambda)}\big(\frac{\Gamma(\frac{n}{2})}{\Gamma(n)}\big)^{\frac{\lambda-n}{n}}.$$
In this case, equality \eqref{EHL} holds if and only if
$$f=\frac{c_1}{\Big(d+|x-x_0|^2)^{\frac{2n-\lambda}{2}}}$$
for some $x_0\in \mathbb{R}^n$ and $d>0$.
\medskip

We mention that the Stein-Weiss inequalities with fractional Poisson kernels have been established recently by Chen, Liu, Lu and Tao \cite{CLLT} using the Hardy-Littlewood-Sobolev inequalities proved by Chen, Lu and Tao \cite{CLT}.

Another natural question is whether there exist some similar inequalities such as \eqref{SW} and \eqref{HL1} on the Heisenberg group?
Folland and Stein first in \cite{FS} give a positive answer in terms of fractional integral operator. For simplicity, we introduce some background knowledge about Heisenberg group.
The $n$-dimensional Heisenberg group $\mathbb{H}^n=\mathbb{C}^n\times \mathbb{R}$ is a Lie group with group structure given by
$$uv=(z,t)(z',t')=(z+z',t+t'+2Im(z\cdot\overline{z'}))$$
for any two points $u=(z,t)$ and $v=(z',t')$ in $\mathbb{H}^n$. Haar measure on $\mathbb{H}^n$ is the usual Lebesgue measure $du=dzdt$.
The Lie algebra of $\mathbb{H}^n$ is generated by the left invariant vector fields
$$T=\frac{\partial}{\partial t},\ \ X_j=\frac{\partial}{\partial x_j}+2y_j\frac{\partial}{\partial t},\ \ Y_j=\frac{\partial}{\partial y_j}+2x_j\frac{\partial}{\partial t}.$$
For each real number $\lambda \in \mathbb{R}$ and $u=(z,t) \in \mathbb{H}^n$, we denote the dilation $\delta_\lambda(u)=(\lambda z,\lambda^2t)$, the homogenous norm on $\mathbb{H}^n$ as $|u|=(|z|^4+t^2)^{\frac{1}{4}}$ and $Q=2n+2$ as the homogenous dimension. With this norm, a Heisenberg ball centered at $u=(z,t)\in \mathbb{H}^n$ with radius $R$
is given by $B_{\mathbb{H}}(u, R)=\{v\in \mathbb{H}: d(u, v)<R\}$, where $v=(z',t')$, $u^{-1}=(-z,-t)$, and $d(u,v):=|u^{-1}v|=|v^{-1}u|$ denotes left-invariant quasi-metric on Heisenberg group.
\medskip

Folland and Stein first \cite{FS} established the following Hardy-Littlewood-Sobolev inequality on the Heisenberg group.
\vskip 0.1cm

\noindent\textbf{Theorem B.}
For $1<q'$, $p<\infty$, $0<\lambda<Q$ and $\frac{1}{q'}+\frac{1}{p}+\frac{\lambda}{Q}=2$, there exists some constant $C_{Q,p,q'}$ such that
for any $f\in L^{q'}(\mathbb{H}^n)$, $g \in L^{p}(\mathbb{H}^n)$, there holds
\begin{equation}\label{HHL}
\int_{\mathbb{H}^n}\int_{\mathbb{H}^n} f(u)|u^{-1}v|^{-\lambda}g(v)dudv\leq C_{Q,p,q'}\|f\|_{L^{q'}(\mathbb{H}^n)}\|g\|_{L^p(\mathbb{H}^n)}.
\end{equation}
\vskip 0.1cm
In conjunction with the CR Yamabe problem on the CR manifolds (see \cite{JL1,JL2}), Jerison and Lee \cite{JL2} proved
the sharp version and gave the optimizer of the  $L^2(\mathbb{H}^n)$ to $L^{\frac{2Q}{Q-2}}(\mathbb{H}^n)$ Sobolev inequality on the Heisenberg group which is equivalent to the Hardy-Littlewood-Sobolev inequality  \eqref{HHL} for $\lambda=Q-2$ and $p=q'=\frac{2Q}{Q+2}$. Frank and Lieb \cite{FL3} established the best constants and extremal functions for this inequality  \eqref{HHL}
in the case of $p=q'=\frac{2Q}{2Q-\lambda}$. More precisely, they obtained the following result.

\noindent\textbf{Theorem C.}
For $0<\lambda<Q$ and $p=q'=\frac{2Q}{2Q-\lambda}$, then for any  $f\in L^{q'}(\mathbb{H}^n)$ and $g\in L^p(\mathbb{H}^n)$, there holds
\begin{equation}\label{EHL2}
\int_{\mathbb{H}^n}\int_{\mathbb{H}^n} f(u)|u^{-1}v|^{-\lambda}g(v)dudv\leq\Big(\frac{\pi^{n+1}}{2^{n-1}n!}\Big)^{\frac{\lambda}{Q}}\frac{n!\Gamma(\frac{Q-\lambda}{2})}{\Gamma^2(\frac{2Q-\lambda}{4})}\|f\|_{L^{q'}(\mathbb{H}^n)}\|g\|_{L^p(\mathbb{H}^n)}
\end{equation}
with equality if and only if
$$f(u)=c_1H(\delta(a^{-1}u)),\ \ \ g(u)=c_2H(\delta(a^{-1}u))$$ for some $c_1$, $c_2\in \mathbb{R}$, $\delta>0$ and $a\in \mathbb{H}^n$. Here $H$ is the function given by $H(u)=\Big((1+|z|^2)^2+t^2\Big)^{-\frac{2Q-\lambda}{4}}$.
\medskip

We remark as a borderline case of the sharp Sobolev inequality on the Heisenberg group, the sharp Trudinger-Moser inequality on finite domains was established by Cohn and Lu \cite{CL} and by Lam and Lu \cite{LL} for critical Trudinger-Moser inequality and by Lam, Lu and Tang \cite{LLT} for the subcritical Trudinger-Moser inequality on the entire Heisenberg group.

In the Euclidean space, by using the competing symmetry method, Carlen and Loss \cite{CarlenLoss}  provided a different proof from Lieb's of the sharp constants and extremal functions in the diagonal case $p=q^{'}=\frac{2n}{2n-\lambda}$ and Frank and Lieb \cite{FL1} offered a new proof using the reflection positivity of inversions in spheres in the special diagonal case. Carlen, Carillo and Loss gave a simple proof of the  sharp Hardy-Littlewood-Sobolev
inequality when $\lambda=n-2$ for $n\geq 3$ via a monotone
flow governed by the fast diffusion equation \cite{CCL}.
Frank and Lieb \cite{FL2} further employed a rearrangement-free technique developed in \cite{FL3} to recapture  the best constant of inequality \eqref{HHL}. This method has also been successfully applied to obtain sharp constants for similar inequalities on quaternionic and octonionic Heisenberg groups (see  Christ, Liu and Zhang \cite{Chliu1,Chliu2}). For Hardy-Littlewood-Sobolev inequality \eqref{HHL} on the Heisenberg group,  little was known about maximizers and sharp constants except the case $p=q'=\frac{2Q}{2Q-\lambda}$. Han \cite{Han0} proved the existence of extremals of inequality \eqref{HHL} for all the case of $q'$ and $p$ through the concentration compactness principle of Lions \cite{Lions1,Lions2}.
\medskip

Han, Lu and Zhu in \cite{HLZ} utilized the theory of weighted inequalities for integral operators to establish
the Stein-Weiss inequalities \eqref{SW} on the Heisenberg group.

\medskip

\noindent\textbf{Theorem D.}
For $1<p<\infty$, $1< q'<\infty$, $0<\lambda<Q=2n+2$ and $\alpha+\beta\geq0$ such that $\lambda+\alpha+\beta\leq Q$, $\beta<\frac{Q}{p'}$, $\alpha<\frac{Q}{q}$ and $\frac{1}{p}+\frac{1}{q'}+\frac{\lambda+\alpha+\beta}{Q}=2$,
there exists some constant $C_{Q,\alpha,\beta,p,q'}>0$ such that for any functions  $f\in L^{q'}(\mathbb{H}^n)$ and $g\in L^{p}(\mathbb{H}^n)$, there holds
\begin{equation}\label{HSW}
\int_{\mathbb{H}^n}\int_{\mathbb{H}^n}\frac{f(u)g(v)}{|u|^{\alpha}|u^{-1}v|^{\lambda}|v|^{\beta}}dvdu\leq C_{Q,\alpha,\beta,p,q'}\|f\|_{L^{q'}(\mathbb{H}^n)}\|g\|_{L^{p}(\mathbb{H}^n)}.
\end{equation}

Furthermore, they also studied the weighted Hardy-Littlewood-Sobolev inequalities with different weights, i.e., $|z|$ weights. More precisely, they obtained
\medskip

\noindent\textbf{Theorem E.}
For $1<p<\infty$, $1< q'<\infty$, $0<\lambda<Q=2n+2$ and $\alpha+\beta\geq0$ such that $\lambda+\alpha+\beta\leq Q$, $\beta<\frac{2n}{p'}$, $\alpha<\frac{2n}{q}$ and $\frac{1}{p}+\frac{1}{q'}+\frac{\lambda+\alpha+\beta}{Q}=2$,
there exists some constant $C_{Q,\alpha,\beta,p,q'}>0$ such that for any functions  $f\in L^{q'}(\mathbb{H}^n)$ and $g\in L^{p}(\mathbb{H}^n)$, there holds
\begin{equation}\label{HSW1}
\int_{\mathbb{H}^n}\int_{\mathbb{H}^n}\frac{f(u)g(v)}{|z|^{\alpha}|u^{-1}v|^{\lambda}|z'|^{\beta}}dvdu\leq C_{Q,\alpha,\beta,p,q'}\|f\|_{L^{q'}(\mathbb{H}^n)}\|g\|_{L^{p}(\mathbb{H}^n)},
\end{equation}
where $u=(z,t)$ and $v=(z', t')$.
\medskip

It is natural to ask whether there exist extremal functions for the Stein-Weiss inequalities. Due to the absence of Riesz rearrangement inequalities on the Heisenberg group,
one cannot simply follow the same line of Lieb \cite{Lieb} to establish the existence of extremals for the Stein-Weiss inequality on the Heisenberg group. On the other hand, it is well known that the concentration compactness principle is an essential tool in dealing with the existence of extremal functions for  geometrical inequalities. Han \cite{Han0} applied the concentration compactness argument, which was originally used by Lions to solve the existence of extremals for Sobolev inequalities in Euclidean spaces, to obtain the existence of extremals for the Hardy-Littlewood-Sobolev inequality on the Heisenberg group. For the Stein-weiss inequality, this problem is highly non-trivial because one cannot apply the same method of Han \cite{Han0} due to the loss of translation invariance of the Stein-Weiss inequality. We overcome this difficulty by using the method of combining the concentration compactness argument and
dilation invariance to establish the attainability of the best constant of the Stein-Weiss inequality \eqref{HSW} on the Heisenberg group.
For the above Stein-Weiss inequality with $|z|$ weights on the Heisenberg group, Beckner \cite{B1} gave the sharp constant when $\alpha=\beta=\frac{Q-\lambda}{2}$, $p=q=2$ and proved the nonexistence of optimizers. However, it is completely open in other general cases including existence question. We verify that there exist extremal functions for the $|z|$ weighted Stein-Weiss inequalities in Theorem \ref{thm3} in general case.
Define the operator
$$I_{\lambda}(g)(u)=\int_{\mathbb{H}^n}\frac{g(v)}{|u^{-1}v|^{\lambda}}dv.$$
By duality, we can see that inequality \eqref{HSW} is equivalent to the following doubly weighted inequality
\begin{equation}
\|I_{\lambda}(g)|u|^{-\alpha}\|_{L^{q}(\mathbb{H}^n)}\leq C_{n,\alpha,\beta,p,q'}\|g|v|^{\beta}\|_{L^{p}(\mathbb{H}^n)}.
\end{equation}
Consider the following maximizing problem
\begin{equation}\label{mini}
C_{Q,\alpha,\beta,p,q'}:= \sup \{\|I_{\lambda}(g)|u|^{-\alpha}\|_{L^q(\mathbb{H}^n)} : g\geq 0 , \|g|v|^{\beta}\|_{L^p(\mathbb{H}^{n})}=1\},
\end{equation}
it is easy to verify that the extremals for the Stein-Weiss inequality on the Heisenberg group are those solving the maximizing problem \eqref{mini}.
\begin{theorem}\label{thm1}
Under assumptions of Theorem D, if we furthermore assume that $q>p$, then
there exists some nonnegative function $g$ satisfying $\|g|v|^{\beta}\|_{L^p(\mathbb{H}^{n})}=1$ and $\|I_{\lambda}(g)|u|^{-\alpha}\|_{L^q(\mathbb{H}^n)}=C_{Q,\alpha,\beta, p,q'}$.
\end{theorem}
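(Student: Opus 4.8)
The plan is to run Lions' concentration--compactness method on a maximizing sequence for \eqref{mini}, replacing the missing translation invariance of \eqref{HSW} by its dilation invariance. First I would record the symmetry: a change of variables using the dilation $\delta_\sigma$, together with the balance condition $\frac1p+\frac1{q'}+\frac{\lambda+\alpha+\beta}{Q}=2$, shows that $g\mapsto g_\sigma:=\sigma^{\beta+Q/p}\,g\circ\delta_\sigma$ leaves \emph{both} the constraint $\|g|v|^{\beta}\|_{L^p(\mathbb H^n)}$ and the functional $\|I_\lambda(g)|u|^{-\alpha}\|_{L^q(\mathbb H^n)}$ invariant. Given a maximizing sequence $\{g_k\}$ ($g_k\ge 0$, $\|g_k|v|^{\beta}\|_{L^p}=1$, $\|I_\lambda(g_k)|u|^{-\alpha}\|_{L^q}\to C:=C_{Q,\alpha,\beta,p,q'}$), I would use dilation invariance to anchor it at the origin: since $r\mapsto\int_{B_{\mathbb H}(0,r)}|g_k|^p|v|^{\beta p}\,dv$ is continuous and increases from $0$ to $1$, one may choose $\sigma_k$ so that, after replacing $g_k$ by $(g_k)_{\sigma_k}$ (still maximizing), the probability measures $d\rho_k:=|g_k|^p|v|^{\beta p}\,dv$ satisfy $\rho_k\big(B_{\mathbb H}(0,1)\big)=\tfrac12$.

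Next I would apply the concentration--compactness lemma to $\{\rho_k\}$. \emph{Vanishing} is ruled out at once because $\rho_k(B_{\mathbb H}(0,1))=\tfrac12\not\to 0$ (equivalently, vanishing would force $\|I_\lambda(g_k)|u|^{-\alpha}\|_{L^q}\to 0$, contradicting $C>0$). In the \emph{compactness} alternative the concentration centres $y_k$ are forced to stay in a fixed ball, since any ball $B_{\mathbb H}(y_k,R)$ of $\rho_k$-mass exceeding $\tfrac12$ must meet $B_{\mathbb H}(0,1)$; hence $\{\rho_k\}$ is tight around the origin: for every $\varepsilon>0$ there is $R_\varepsilon$ with $\rho_k(B_{\mathbb H}(0,R_\varepsilon))\ge 1-\varepsilon$ for all $k$.

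The core of the argument is to exclude \emph{dichotomy}, and this is the step I expect to be the main obstacle. If dichotomy held one could write $g_k=g_k^1+g_k^2$ with $\|g_k^i|v|^{\beta}\|_{L^p}^p\to\mu_i$, $\mu_1+\mu_2=1$, $0<\mu_i<1$, where --- using the anchoring --- $g_k^1$ stays supported in a bounded set while the support of $g_k^2$ runs off to spatial infinity. Since $I_\lambda$ is linear, $I_\lambda(g_k)=I_\lambda(g_k^1)+I_\lambda(g_k^2)$, and the decisive estimate is that, by the decay of the kernel $|u^{-1}v|^{-\lambda}$ and the sizes of the weights $|u|^{-\alpha},|v|^{-\beta}$ (controlled by $\beta<\tfrac Q{p'}$, $\alpha<\tfrac Qq$ and $\lambda+\alpha+\beta\le Q$), the terms $I_\lambda(g_k^1)$ and $I_\lambda(g_k^2)$ become asymptotically orthogonal in $L^q(|u|^{-\alpha q}\,du)$. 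Granting this and applying Theorem D to each piece,
\begin{equation*}
C^q=\lim_{k\to\infty}\big\|I_\lambda(g_k)|u|^{-\alpha}\big\|_{L^q}^{q}\le C^q\mu_1^{q/p}+C^q\mu_2^{q/p}.
\end{equation*}
Because $q>p$, the map $t\mapsto t^{q/p}$ is strictly superadditive, so $\mu_1^{q/p}+\mu_2^{q/p}<(\mu_1+\mu_2)^{q/p}=1$, a contradiction; this is exactly where the hypothesis $q>p$ is needed. The hard part will be the asymptotic-orthogonality estimate: in the translation-invariant Hardy--Littlewood--Sobolev setting of Han \cite{Han0} one may translate a profile to infinity with no weights present, whereas here the weights $|u|^{-\alpha}$ and $|v|^{-\beta}$ must be carried along and their interaction with the far-away profile estimated with care.

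With vanishing and dichotomy excluded, compactness holds and $\{\rho_k\}$ is tight around the origin. Passing to a subsequence, $g_k|v|^{\beta}\rightharpoonup h\ge 0$ weakly in $L^p(\mathbb H^n)$; I would set $g:=h|v|^{-\beta}\ge 0$, so that $\|g|v|^{\beta}\|_{L^p}\le 1$ by weak lower semicontinuity and $I_\lambda(g_k)(u)\to I_\lambda(g)(u)$ pointwise. Tightness of $\{\rho_k\}$ on the $v$-side transfers to tightness of the measures $|I_\lambda(g_k)|^q|u|^{-\alpha q}\,du$ on the $u$-side (no $L^q$-mass escapes to infinity or concentrates), so pointwise convergence and Vitali's theorem give $I_\lambda(g_k)|u|^{-\alpha}\to I_\lambda(g)|u|^{-\alpha}$ strongly in $L^q$; hence $\|I_\lambda(g)|u|^{-\alpha}\|_{L^q}=C>0$, so $g\not\equiv 0$. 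Finally, were $\|g|v|^{\beta}\|_{L^p}=\theta<1$, the function $g/\theta$ would be admissible for \eqref{mini} with value $C/\theta>C$, contradicting the definition of $C$; therefore $\theta=1$ and $g$ attains \eqref{mini}, which proves the theorem.
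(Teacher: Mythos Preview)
Your overall architecture matches the paper's: you exploit dilation invariance to anchor the maximizing sequence, apply Lions' first concentration--compactness lemma to the probability measures $\rho_k$, rule out vanishing by the normalization $\rho_k(B_{\mathbb H}(0,1))=\tfrac12$, and rule out dichotomy by the strict superadditivity of $t\mapsto t^{q/p}$ (the paper makes the ``asymptotic orthogonality'' precise via the Brezis--Lieb lemma applied to $I_\lambda(g_k)=I_\lambda(g_k\chi_{B})+I_\lambda(g_k\chi_{B^c})$, which is essentially your argument). Up to this point your proposal is correct and close to the paper.

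The genuine gap is in your endgame. You assert that tightness of $\{\rho_k\}$ on the $v$-side ``transfers to tightness of the measures $|I_\lambda(g_k)|^q|u|^{-\alpha q}\,du$ on the $u$-side (no $L^q$-mass escapes to infinity \emph{or concentrates})'' and then invoke Vitali. Tightness at infinity does indeed transfer (and the paper proves exactly this, see the estimate leading to \eqref{tight}), but nothing you have said rules out \emph{local concentration}: it is entirely consistent with compactness of $\{\rho_k\}$ that $\rho_k\rightharpoonup\delta_{u_0}$ and $|I_\lambda(g_k)|^q|u|^{-\alpha q}\,du\rightharpoonup C^q\delta_{u_0}$ while $g=0$. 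In that scenario $I_\lambda(g_k)|u|^{-\alpha}\to 0$ a.e.\ but $\|I_\lambda(g_k)|u|^{-\alpha}\|_{L^q}\to C$, so Vitali fails and your final contradiction ($g/\theta$ beats $C$) never gets off the ground because $g\equiv 0$.

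The paper closes this gap with Lions' \emph{second} concentration--compactness lemma (Lemma~\ref{lem51}): writing $\nu=|I_\lambda(g)|^q|u|^{-\alpha q}\,du+\sum_i\nu^i\delta_{u_i}$ with $\nu^i\le C^q(\mu^i)^{q/p}$, the strict convexity $q>p$ forces, if $\|g|v|^\beta\|_p<1$, the extreme case $\mu=\delta_{u_0}$, $\nu=C^q\delta_{u_0}$, $g=0$. This is then excluded by a case analysis that is absent from your plan: if $u_0=0$ one contradicts the anchoring $\rho_k(B_{\mathbb H}(0,1))=\tfrac12$; if $u_0\neq 0$ one uses (without loss of generality) $\alpha>0$ to compare with the Stein--Weiss inequality having \emph{no} $|u|^{-\alpha}$ weight, which gives a strictly larger exponent $t>q$ and hence local $L^t$-boundedness of $I_\lambda(g_k)$ near $u_0$, forcing $\int_{B_{\mathbb H}(u_0,\delta)}|I_\lambda(g_k)|^q|u|^{-\alpha q}\,du\to 0$ and contradicting $\nu=C^q\delta_{u_0}$. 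You should replace the unjustified ``no concentration'' claim by this argument.
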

\begin{remark}
Lieb \cite{Lieb} employed the Riesz rearrangement inequalities to establish the existence of extremals for inequality \eqref{SW}. Hence they must assume that
both $\alpha$ and $\beta$ are nonnegative. In our proof, we remove this assumption. Our method can be applied to Euclidean space without any change. Hence, our existence results of extremal functions are also new in $\mathbb{R}^{n}$. We state our new result in Euclidean space as follows and its proof can be given identically as on the Hiesenberg group and therefore we needn't give a separate proof.

\end{remark}

\begin{theorem}
For $1<p<\infty$, $1< q'<\infty$, $0<\lambda<n$ and $\alpha+\beta\geq0$ such that $\lambda+\alpha+\beta\leq n$, $\beta<\frac{n}{p'}$, $\alpha<\frac{n}{q}$ and $\frac{1}{p}+\frac{1}{q'}+\frac{\lambda+\alpha+\beta}{n}=2$,
there exists some constant $C_{n,\alpha,\beta,p,q'}>0$ such that for any functions  $f\in L^{q'}(\mathbb{R}^n)$ and $g\in L^{p}(\mathbb{R}^n)$, there holds
\begin{equation}\label{ESW}
\int_{\mathbb{R}^n}\int_{\mathbb{R}^n}\frac{f(x)g(y)}{|x|^{\alpha}|x-y|^{\lambda}|y|^{\beta}}dxdy\leq C_{n,\alpha,\beta,p,q'}\|f\|_{L^{q'}(\mathbb{R}^n)}\|g\|_{L^{p}(\mathbb{R}^n)}.
\end{equation}
Furthermore, if we assume that $q>p$, then the best constant $C_{n,\alpha,\beta,p,q'}$ could be achieved.
\end{theorem}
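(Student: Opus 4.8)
The plan is to run the concentration-compactness argument of Lions, adapted to the non-translation-invariant setting, on the maximizing problem (\ref{mini}) (equivalently its Euclidean analogue). Let $(g_k)\subset L^p(\mathbb{R}^n)$ be a maximizing sequence with $\|g_k|y|^\beta\|_{L^p}=1$ and $\|I_\lambda(g_k)|x|^{-\alpha}\|_{L^q}\to C_{n,\alpha,\beta,p,q'}$; without loss of generality $g_k\ge 0$, and (since the weights are radial) one may symmetrize — though here I will \emph{not} rely on rearrangement. The key point is that (\ref{ESW}) is invariant under the dilation $g(y)\mapsto \tau^{n/p}g(\tau y)$ (with the exponent forced by the scaling relation $\frac1p+\frac1{q'}+\frac{\lambda+\alpha+\beta}{n}=2$), but \emph{not} under translations because of the $|x|^{-\alpha}$ and $|y|^{-\beta}$ factors, which are pinned at the origin. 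So the only loss of compactness comes from mass of $h_k:=|g_k|^p|y|^{\beta p}$ escaping to the origin or to infinity; I would normalize via dilation so that, say, $\int_{B_1}h_k\,dy=\int_{B_1^c}h_k\,dy=\tfrac12$, killing the dilation degree of freedom.

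First I would apply the concentration-compactness lemma to the sequence of probability measures $h_k=|g_k|^p|y|^{\beta p}\,dy$. The vanishing case is excluded by the usual argument: if $\sup_y\int_{B_R(y)}h_k\to 0$ for every $R$, then a covering/interpolation argument together with the \emph{local} Stein–Weiss (or even just Hardy–Littlewood–Sobolev) estimate forces $\|I_\lambda(g_k)|x|^{-\alpha}\|_{L^q}\to 0$, contradicting $C_{n,\alpha,\beta,p,q'}>0$; the dilation normalization above makes this quantitative. The dichotomy case is the heart of the matter: if the mass splits into a piece $g_k^1$ supported near a fixed compact region and a piece $g_k^2$ escaping (to $0$ or $\infty$), I need a strict superadditivity / "binding" inequality
\begin{equation*}
C_{n,\alpha,\beta,p,q'}^{q}>\big(C^{(1)}\big)^{q}+\big(C^{(2)}\big)^{q}
\end{equation*}
(with the appropriate mass-split exponents) that rules this out. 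The hypothesis $q>p$ is exactly what delivers the strict inequality: by the elementary inequality $(a+b)^{q/p}>a^{q/p}+b^{q/p}$ for $a,b>0$ when $q>p$, splitting the $L^p$-mass strictly decreases the sum of the resulting $q$-norms relative to what a single concentrated bump achieves. One must also check that when the escaping piece runs off to infinity or collapses to the origin, the weights $|x|^{-\alpha}$, $|y|^{-\beta}$ degrade the constant for that piece — here the assumptions $\alpha+\beta\ge 0$, $\lambda+\alpha+\beta\le n$ and the upper bounds on $\alpha,\beta$ guarantee the limiting "constant at infinity" and "constant at the origin" are no larger than $C_{n,\alpha,\beta,p,q'}$, so dichotomy cannot produce two genuinely competing profiles. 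Hence compactness: $h_k\to h$ tightly, $g_k\to g$ strongly in $L^p(|y|^{\beta p}dy)$ along a subsequence.

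Finally, strong $L^p$-convergence of $g_k$ (with the weight) plus the continuity of the bilinear form — for which I would use that $I_\lambda$ maps the relevant weighted $L^p$ into weighted $L^q$ boundedly (Theorem D / its Euclidean counterpart) and a Brezis–Lieb-type splitting to pass to the limit in the functional — yields $\|g|y|^\beta\|_{L^p}=1$ and $\|I_\lambda(g)|x|^{-\alpha}\|_{L^q}=C_{n,\alpha,\beta,p,q'}$, so $g$ is the desired extremal. I expect the main obstacle to be the dichotomy step: precisely quantifying how the sharp constant for a "profile at infinity" and a "profile at the origin" relate to $C_{n,\alpha,\beta,p,q'}$ when $\alpha$ or $\beta$ is allowed to be negative (so the naive monotonicity of the weights under dilation is lost), and combining this with the $q>p$ strict-convexity gain to close the strict inequality. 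The absence of translation invariance actually helps in one respect — the only "directions of escape" are toward $0$ and $\infty$ rather than an $n$-parameter family of translations — but it complicates the bookkeeping of the limiting constants, and that is where the careful work lies.
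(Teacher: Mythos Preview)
Your overall strategy---concentration compactness with dilation normalization and the strict convexity from $q>p$---matches the paper's. But there is a genuine gap, and it is not where you think it is.

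First, the dichotomy step is simpler than you suggest. You do \emph{not} need to compare ``the constant at infinity'' or ``the constant at the origin'' against $C_{n,\alpha,\beta,p,q'}$. The paper splits $g_k=g_k\chi_{B_R(v_k)}+g_k\chi_{B_R^c(v_k)}$, applies Brezis--Lieb to write $\|I_\lambda(g_k)|x|^{-\alpha}\|_q^q$ as the sum of the $q$-th powers of the two pieces (up to $o(1)$), and then bounds each piece by the \emph{same} Stein--Weiss inequality with the same constant $C_{n,\alpha,\beta,p,q'}$. This yields $C^q\le C^q\big(k^{q/p}+(1-k)^{q/p}\big)+o(1)<C^q$, a contradiction. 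No limiting profiles, no comparison of constants; the sign of $\alpha$ or $\beta$ individually is irrelevant here.

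The real gap is the sentence ``Hence compactness: $h_k\to h$ tightly, $g_k\to g$ strongly in $L^p(|y|^{\beta p}dy)$.'' Tightness (Lions' alternative (a)) does \emph{not} imply strong $L^p$ convergence: the sequence can still concentrate as a Dirac mass at a single point $x_0$, and this is exactly what must be ruled out. The paper handles this with two more ingredients you have omitted. First, a separate argument (Lemma~\ref{lem4} in the paper) shows $I_\lambda(g_k)(x)|x|^{-\alpha}\to I_\lambda(g)(x)|x|^{-\alpha}$ a.e.\ by truncating the kernel and exploiting tightness. Second, Lions' \emph{second} concentration-compactness lemma (Lemma~\ref{lem51}) gives the defect-measure decomposition $\nu=|I_\lambda(g)|^q|x|^{-\alpha q}dx+\sum_i\nu^i\delta_{x_i}$ with $\nu^i\le C^q(\mu^i)^{q/p}$, and the chain of inequalities then forces, if $\|g|y|^\beta\|_p<1$, that $\mu=\delta_{x_0}$ and $g\equiv 0$. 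The paper rules this out in two cases: $x_0=0$ contradicts the dilation normalization $\int_{B_1}h_k=\tfrac12$, while for $x_0\neq 0$ one uses (assuming WLOG $\alpha>0$) the Stein--Weiss inequality with $\alpha$ replaced by $0$ to get $I_\lambda(g_k)\in L^t_{\mathrm{loc}}$ uniformly for some $t>q$; combined with $I_\lambda(g_k)\to 0$ a.e., this forces $\int_{B_\delta(x_0)}|I_\lambda(g_k)|^q|x|^{-\alpha q}\to 0$, contradicting $\nu=C^q\delta_{x_0}$. This last step---the improved local integrability from dropping the $|x|^{-\alpha}$ weight---is the key idea you are missing, and it is precisely where the hypothesis that one of $\alpha,\beta$ is positive (guaranteed by $\alpha+\beta\ge 0$ unless both vanish) enters.
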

Similar to Theorem \ref{thm1}, we also obtain the existence result involved with the Stein-Weiss inequalities with $|z|$ weights on the Heisenberg group.
\begin{theorem}\label{thm3}
Under the assumptions of Theorem E, if we furthermore assume that $q>p$, then
there exists some nonnegative function $g$ satisfying $\|g|z'|^{\beta}\|_{L^p(\mathbb{H}^{n})}=1$ and $\|I_{\lambda}(g)|z|^{-\alpha}\|_{L^q(\mathbb{H}^n)}=C_{Q,\alpha,\beta, p,q'}$.
\end{theorem}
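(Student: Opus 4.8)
The plan is to follow, essentially verbatim, the concentration--compactness scheme of Theorem \ref{thm1}, replacing the radial weight $|u|$ by the horizontal weight $|z|$ throughout; the extra $t$-translation and $U(n)$-rotation symmetries of \eqref{HSW1} will play no role. Fix a nonnegative maximizing sequence $g_k$ for
$C_{Q,\alpha,\beta,p,q'}=\sup\{\|I_\lambda(g)|z|^{-\alpha}\|_{L^q(\mathbb{H}^n)}:g\ge0,\ \|g|z'|^{\beta}\|_{L^p(\mathbb{H}^n)}=1\}$,
so that $\|g_k|z'|^{\beta}\|_{L^p(\mathbb{H}^n)}=1$ and $\|I_\lambda(g_k)|z|^{-\alpha}\|_{L^q(\mathbb{H}^n)}\to S:=C_{Q,\alpha,\beta,p,q'}$, and put $h_k:=g_k|z'|^{\beta}\ge 0$. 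The first and crucial step is to normalize $g_k$ by the dilation symmetry: the substitution $g\mapsto s^{\,Q/p+\beta}\,g(\delta_s\,\cdot\,)$ leaves both $\|g|z'|^{\beta}\|_{L^p}$ and $\|I_\lambda(g)|z|^{-\alpha}\|_{L^q}$ unchanged, as a change of variables using $\delta_s\in\mathrm{Aut}(\mathbb{H}^n)$ together with the balance relation $\frac1p+\frac1{q'}+\frac{\lambda+\alpha+\beta}{Q}=2$ shows. Since $R\mapsto\int_{B_{\mathbb{H}}(0,R)}h_k^p\,dv$ is continuous, nondecreasing, tends to $0$ as $R\to0^+$ and to $1$ as $R\to\infty$, after such a dilation we may and do assume
\[
\int_{B_{\mathbb{H}}(0,1)}h_k^p\,dv=\tfrac12\qquad\text{for all }k.
\]

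Passing to a subsequence, $h_k\rightharpoonup h$ in $L^p(\mathbb{H}^n)$, $h_k^p\,dv\rightharpoonup\mu$ weakly-$*$ among Radon measures with $\mu(\mathbb{H}^n)\le1$, and (after a further subsequence, using the smoothing of $I_\lambda$) $I_\lambda(g_k)\to I_\lambda(g)$ a.e., where $g:=h|z'|^{-\beta}\ge0$. A Brezis--Lieb splitting, $\|h_k-h\|_{L^p}^p=1-\|h\|_{L^p}^p+o(1)$ and $\|I_\lambda(g_k)|z|^{-\alpha}\|_{L^q}^q=\|I_\lambda(g)|z|^{-\alpha}\|_{L^q}^q+\|I_\lambda(g_k-g)|z|^{-\alpha}\|_{L^q}^q+o(1)$ (the cross terms vanishing by weak convergence), combined with \eqref{HSW1} applied to $g$ and to $g_k-g$, yields, with $a:=\|h\|_{L^p}^p\in[0,1]$,
\[
S^q\le S^q a^{q/p}+S^q(1-a)^{q/p}.
\]
Since $q>p$, the map $a\mapsto a^{q/p}+(1-a)^{q/p}$ is $<1$ on $(0,1)$, forcing $a\in\{0,1\}$. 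The value $a=0$ means all the $L^p$-mass is lost, either by escaping to infinity or by concentrating at points; the escape is ruled out by $\int_{B_{\mathbb{H}}(0,1)}h_k^p=\tfrac12$ (the mass cannot all leave a fixed ball, and a partial split is precisely the dichotomy already excluded above), while concentration is ruled out because a zoom-in at scale $\varepsilon$ around a point $x_0=(z_0,t_0)$ with $z_0\ne0$ shows that $\|I_\lambda(g)|z|^{-\alpha}\|_{L^q}/\|g|z'|^{\beta}\|_{L^p}$ acquires the factor $\varepsilon^{\,\alpha+\beta}$; as $\alpha+\beta\ge0$ a concentrating sequence cannot exceed $S$, and when $\alpha+\beta>0$ its ratio tends to $0$, contradicting $S>0$ (the borderline $\alpha+\beta=0$ and the locus $\{z=0\}$ being treated separately, the relevant limiting constant still being no larger than $S$). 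Hence $a=1$, i.e.\ $\|h\|_{L^p}=1=\lim_k\|h_k\|_{L^p}$.

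Finally, since $L^p(\mathbb{H}^n)$ with $1<p<\infty$ is uniformly convex, weak convergence together with convergence of norms upgrades to $h_k\to h$ strongly in $L^p(\mathbb{H}^n)$, i.e.\ $g_k\to g$ in $L^p(|z'|^{p\beta}\,dv)$; boundedness of $g\mapsto I_\lambda(g)|z|^{-\alpha}$ from $L^p(|z'|^{p\beta}\,dv)$ into $L^q(\mathbb{H}^n)$ (Theorem E) then gives $I_\lambda(g_k)|z|^{-\alpha}\to I_\lambda(g)|z|^{-\alpha}$ in $L^q$, so that $\|I_\lambda(g)|z|^{-\alpha}\|_{L^q}=S$ while $\|g|z'|^{\beta}\|_{L^p}=1$, and $g$ is the desired extremal. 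The principal obstacle is the lack of translation invariance of \eqref{HSW1}: unlike in Han's proof for the Hardy--Littlewood--Sobolev inequality \eqref{HHL}, one cannot recenter an escaping or concentrating subsequence, so the dilation normalization alone must prevent both loss of mass at spatial infinity and concentration at a point, and the interaction terms in the dichotomy/Brezis--Lieb step have to be controlled in the presence of the possibly singular weight $|z|^{-\alpha}$.
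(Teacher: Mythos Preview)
Your overall architecture---dilation normalization, Brezis--Lieb, subadditivity to force $a\in\{0,1\}$, then rule out $a=0$---is the right one and matches the paper, but there is a genuine gap at the step where you exclude concentration, and it stems directly from your opening claim that the $t$-translation symmetry ``will play no role.''

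Suppose, in your notation, that $a=0$ and the mass of $h_k^p\,dv$ concentrates at a single point $u_0=(z_0,t_0)$. If $z_0=0$, your ``zoom-in'' heuristic does not give a strictly smaller limiting constant: left-translation by $(0,-t_0)$ preserves both weights $|z|^{-\alpha}$ and $|z'|^{\beta}$, so concentrating at any point of the center $\{z=0\}$ reproduces exactly the original variational problem, and ``the relevant limiting constant still being no larger than $S$'' is an equality, not a contradiction. Your dilation-only normalization $\int_{B_{\mathbb H}(0,1)}h_k^p=\tfrac12$ only pins $|u_0|\ge 1$; it does not exclude $u_0=(0,t_0)$ with $|t_0|\ge 1$. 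The paper fixes exactly this by \emph{using} the $t$-translation invariance: after dilation one further translates in $t$ so that
\[
\int_{B_{\mathbb H}(0,1)}|g_i|^p|z'|^{\beta p}\,dv=\sup_{t\in\mathbb R}\int_{B_{\mathbb H}((0,t),1)}|g_i|^p|z'|^{\beta p}\,dv=\tfrac12,
\]
which immediately rules out $\mu=\delta_{(0,t_0)}$ for every $t_0$.

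Two secondary points. First, your assertion that $I_\lambda(g_k)\to I_\lambda(g)$ a.e.\ ``using the smoothing of $I_\lambda$'' is not free: the paper devotes a separate lemma (the analogue of Lemma~\ref{lem4}) to proving convergence in measure, and that proof needs tightness of $\{h_k^p\,dv\}$, which in turn comes from first running Lions' trichotomy and excluding vanishing and dichotomy. You have effectively used the conclusion of that lemma without supplying its hypotheses. Second, for $z_0\ne 0$ the paper does not argue by a scaling factor $\varepsilon^{\alpha+\beta}$; it assumes WLOG $\alpha>0$, applies the $|z|$-weighted Stein--Weiss inequality with $\alpha$ replaced by $0$ to get $I_\lambda(g_i)$ bounded in $L^t$ with $t>q$, and then uses a.e.\ convergence to $0$ to force $\int_{B(u_0,\delta)}|I_\lambda(g_i)|^q|z|^{-\alpha q}\,du\to 0$, contradicting $\nu=S^q\delta_{u_0}$. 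Your $\varepsilon^{\alpha+\beta}$ heuristic is suggestive but, as written, does not close the case $\alpha+\beta=0$.
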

Next, we will prove a more general result.

\begin{theorem}\label{thm5}
For $1<p<\infty$, $1< q'<\infty$, $0<\lambda<n$ and $\alpha+\beta\geq0$ such that $\lambda+\alpha+\beta\leq n+m$, $\beta<\frac{m}{p'}$, $\alpha<\frac{m}{q}$ and $\frac{1}{p}+\frac{1}{q'}+\frac{\lambda+\alpha+\beta}{n+m}=2$,
there exists some constant $C_{n,m,\alpha,\beta,p,q'}>0$ such that for any functions  $f\in L^{q'}(\mathbb{R}^{n+m})$ and $g\in L^{p}(\mathbb{R}^{n+m})$, there holds
\begin{equation}\label{HSWF}
\int_{\mathbb{R}^{n+m}}\int_{\mathbb{R}^{n+m}}\frac{f(x)g(y)}{|x'|^{\alpha}|x-y|^{\lambda}|y'|^{\beta}}dxdy\leq C_{n,m,\alpha,\beta,p,q'}\|f\|_{L^{q'}(\mathbb{R}^{n+m})}\|g\|_{L^{p}(\mathbb{R}^{n+m})},
\end{equation}
where $x=(x', x'')$, $y=(y', y'')\in \mathbb{R}^m\times \mathbb{R}^n$.
Furthermore, if we assume that $q>p$, then the best constant $C_{n,\alpha,\beta,p,q'}$ could be achieved.
\end{theorem}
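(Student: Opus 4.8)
The plan is to prove Theorem~\ref{thm5} in two steps: first the inequality \eqref{HSWF} itself, then the attainability of $C_{n,m,\alpha,\beta,p,q'}$ under the extra hypothesis $q>p$.

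For the inequality I would argue exactly as in the proofs of Theorems D and E. By duality, \eqref{HSWF} is equivalent to the weighted bound
$$\big\||x'|^{-\alpha}\,I_\lambda(|y'|^{-\beta}g)\big\|_{L^q(\mathbb{R}^{n+m})}\le C\,\|g\|_{L^p(\mathbb{R}^{n+m})},\qquad I_\lambda f(x)=\int_{\mathbb{R}^{n+m}}\frac{f(y)}{|x-y|^{\lambda}}\,dy,$$
and one decomposes $\mathbb{R}^{n+m}\times\mathbb{R}^{n+m}$ according to the relative sizes of $|x-y|$, $|x'|$ and $|y'|$, estimating each piece by Hölder's and Young's inequalities (the Schur-type Stein--Weiss argument). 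The balance relation $\frac1p+\frac1{q'}+\frac{\lambda+\alpha+\beta}{n+m}=2$ supplies the homogeneity, while $\alpha<\frac mq$, $\beta<\frac m{p'}$ and $\lambda+\alpha+\beta\le n+m$ are precisely the conditions keeping the power weights locally integrable over the $m$-dimensional slices $\{x'=0\}$ and $\{y'=0\}$ and controlling the behaviour at infinity. Since the weights act only on the $\mathbb{R}^m$-variables, this is handled as in Theorem E, and I would not reproduce the computation.

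For the existence of extremals I pass to the dual maximizing problem
$$C:=C_{n,m,\alpha,\beta,p,q'}=\sup\Big\{\big\||x'|^{-\alpha}I_\lambda(g)\big\|_{L^q(\mathbb{R}^{n+m})}:\ g\ge0,\ \big\||y'|^\beta g\big\|_{L^p(\mathbb{R}^{n+m})}=1\Big\}$$
and run the concentration--compactness argument of Lions together with dilation invariance, as for Theorems \ref{thm1} and \ref{thm3}. The structural point is that \eqref{HSWF} has exactly two families of noncompact symmetries: the dilations $g\mapsto g_\tau$ with $g_\tau(y)=\tau^{\beta+(n+m)/p}g(\tau y)$, which preserve both $\||y'|^\beta g\|_p$ and $\||x'|^{-\alpha}I_\lambda g\|_q$ by virtue of the balance relation, and the translations $g\mapsto g(\cdot-(0,h))$, $h\in\mathbb{R}^n$, in the last $n$ variables, which leave $|x'|$, $|y'|$ and $|x-y|$ unchanged; translations in the first $m$ variables are \emph{not} symmetries, since the weights are centered at $\{x'=0\}$. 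Given a maximizing sequence $g_k\ge0$, $\||y'|^\beta g_k\|_p=1$, $\||x'|^{-\alpha}I_\lambda g_k\|_q\to C$, I would first use a dilation $\tau_k$ and a translation $h_k\in\mathbb{R}^n$ to normalize the probability measures $d\mu_k=|y'|^{\beta p}g_k^p\,dy$ so that their concentration functions satisfy $Q_k(1)=\tfrac12$. Lions' dichotomy lemma then leaves three possibilities. Vanishing is ruled out by splitting $I_\lambda$ into its near- and far-diagonal parts, which forces $\||x'|^{-\alpha}I_\lambda g_k\|_q\to0$ when $\sup_z\mu_k(B(z,1))\to0$, contradicting $C>0$. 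Dichotomy is ruled out precisely by $q>p$ (equivalently $\frac1p+\frac1{q'}>1$, the strict form of Lieb's condition): if $\mu_k$ splits into asymptotically separated pieces of masses $\theta$ and $1-\theta$ with $\theta\in(0,1)$, then writing $g_k\approx g_k^1+g_k^2$ and using that the cross terms vanish as the supports separate gives
$$C^q\le C^q\big(\theta^{q/p}+(1-\theta)^{q/p}\big)\le C^q,$$
and since $q/p>1$ the (strictly convex) function $\theta\mapsto\theta^{q/p}+(1-\theta)^{q/p}$ is $<1$ on $(0,1)$, a contradiction. Hence the compactness alternative holds; combining it with a Brezis--Lieb splitting of both $\||y'|^\beta g_k\|_p^p$ and $\||x'|^{-\alpha}I_\lambda g_k\|_q^q$ (and again $q>p$ to show the weak limit carries the full mass) one gets, after the normalization, $g_k\to g$ strongly in $L^p(|y'|^{\beta p}\,dy)$ with $\||y'|^\beta g\|_p=1$, and the continuity of $g\mapsto\||x'|^{-\alpha}I_\lambda g\|_q$ then gives $\||x'|^{-\alpha}I_\lambda g\|_q=C$, so $g$ is an extremal.

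The hard part is the failure of translation invariance in the $x'$-directions. After the dilation is normalized, the mass of $\mu_k$ in the first $m$ coordinates cannot be recentered, so I must separately rule out that this mass escapes to infinity in $x'$ or collapses onto the singular set $\{y'=0\}$ of the weight $|y'|^{\beta}$ — either of which would destroy the maximizing property. This is where working without the assumption $\alpha,\beta\ge0$ makes the argument delicate: one must exploit only $\alpha+\beta\ge0$ together with the balance relation while allowing one weight to be singular and the other to grow at infinity, and the vanishing estimate and the cross-term estimates in the dichotomy step must be carried out in that generality. Once this interaction between the weight singularities and the concentration phenomenon is controlled, the remaining steps are identical to those in Theorems \ref{thm1} and \ref{thm3}.
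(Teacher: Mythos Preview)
Your concentration--compactness argument for the existence of extremals is essentially the paper's own: normalize by the dilation (and translation in the $\mathbb{R}^n$-variables, which is a genuine symmetry), apply Lions' trichotomy, rule out dichotomy via the strict convexity coming from $q>p$, and then use the second concentration--compactness lemma together with Brezis--Lieb to upgrade weak to strong convergence. The ``hard part'' you single out --- that mass cannot be recentered in the $x'$-directions and so one must separately exclude concentration at the singular set of the weight versus at a nonsingular point --- is precisely the Case~1/Case~2 dichotomy handled at the end of the proofs of Theorems~\ref{thm1} and~\ref{thm3}, and the paper simply refers back to that argument. One small remark: once you have normalized so that $Q_k(1)=\tfrac12$, vanishing is ruled out immediately, so your near/far diagonal splitting of $I_\lambda$ is unnecessary there.

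Where your proposal genuinely diverges from the paper is in the proof of the inequality \eqref{HSWF} itself. You propose a direct Stein--Weiss/Schur-type decomposition according to the relative sizes of $|x-y|$, $|x'|$, $|y'|$, and you say this is ``exactly as in the proofs of Theorems~D and~E''. In fact those theorems (from \cite{HLZ}) are proved via the Sawyer--Wheeden theory of two-weight norm inequalities for fractional integrals, and the paper follows the same route here: it reduces \eqref{HSWF} to verifying the two Sawyer--Wheeden conditions (a reverse-doubling condition on $\phi(B)=2^{24\lambda}r^{-\lambda}$ and a uniform $A_\infty$-type testing condition on balls), and the hypotheses $\alpha<m/q$, $\beta<m/{p'}$ are used precisely to guarantee that $|x'|^{-\alpha q t}$ and $|y'|^{-\beta p' t}$ are locally integrable for some $t>1$. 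Your direct decomposition could in principle succeed, but it is a different and more hands-on route; the Sawyer--Wheeden approach has the advantage of being a black box that does not require tracking the signs of $\alpha$ and $\beta$ separately, which matters here since only $\alpha+\beta\ge0$ is assumed.
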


Once we establish the existence of extremals for the inequality \eqref{HSWF}, we naturally are concerned about some properties such as radial symmetry for their extremals. In order to achieve this purpose, one can maximize the functional
\begin{equation}\label{functional}
J(f,g)=\int_{\mathbb{R}^{n+m}}\int_{\mathbb{R}^{n+m}}\frac{f(x)g(y)}{|x'|^{\alpha}|x-y|^{\lambda}|y'|^{\beta}}dxdy
\end{equation}
under the constraint $\|f\|_{L^{q'}}=\|g\|_{L^p}=1$. Assume that $f$ and $g$ is a pair of extremal functions to the inequality \eqref{HSWF},
According to Euler-Lagrange multipliers theorem, we obtain
\begin{equation}\label{eu}\begin{cases}
J(f,g)f(x)^{q'-1}=\int_{\mathbb{R}^{n+m}}|x'|^{-\alpha}|x-y|^{-\lambda} g(y)|y'|^{-\beta} dy,\\
J(f,g)g(x)^{p-1}=\int_{\mathbb{R}^{n+m}}|x'|^{-\beta}|x-y|^{-\lambda} f(y)|y'|^{-\alpha} dy.
\end{cases}\end{equation}
\medskip
Set $u=c_1f^{q'-1}, v=c_2g^{p-1}, \frac{1}{q'-1}=p_1$ and $\frac{1}{p-1}=p_2$, then for a proper choice of
constants $c_1$ and $c_2$, system \eqref{eu} becomes
\begin{equation}\label{system3}\begin{cases}
u(x)=\int_{\mathbb{R}^{n+m}}|x'|^{-\alpha}|x-y|^{-\lambda} v^{p_2}(y)|y'|^{-\beta} dy,\\
v(x)=\int_{\mathbb{R}^{n+m}}|x'|^{-\beta}|x-y|^{-\lambda} u^{p_1}(y)|y|^{-\alpha} dy,
\end{cases}\end{equation}
where $\frac{1}{p_1-1}+\frac{1}{p_2-1}=\frac{\alpha+\beta+\lambda}{n+m}.$
Applying the methods of moving plane in integral forms \cite{CLO}, we obtain symmetry results for positive solutions of the system \eqref{system3}.

\begin{theorem}\label{thm6}
Assume that $(u,v)\in L^{p_1+1}(\mathbb{R}^{m+n})\times L^{p_2+1}(\mathbb{R}^{m+n})$ is a pair of positive solutions of the integral system \eqref{system3}, then $u(x)|_{\mathbb{R}^m\times \{0\}}$ and $v(x)|_{\mathbb{R}^m\times \{0\}}$ are radially symmetric and monotone decreasing about the origin in $\mathbb{R}^m$, $u(x)|_{\{0\}\times \mathbb{R}^n}$ and $v(x)|_{\{0\} \times \mathbb{R}^n}$ are radially symmetric and monotone decreasing about some $x_0 \in \mathbb{R}^n$.
\end{theorem}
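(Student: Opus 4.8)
The plan is to apply the method of moving planes in integral form, following Chen--Li--Ou \cite{CLO}, but carried out separately in the two blocks of the splitting $\mathbb{R}^{n+m}=\mathbb{R}^m_{x'}\times\mathbb{R}^n_{x''}$. The structural point is that the system \eqref{system3} is invariant under the orthogonal group $O(m)$ acting on the $x'$-block about the origin and under the full group of rigid motions of $\mathbb{R}^n$ acting on the $x''$-block, whereas the weights $|x'|^{-\alpha},|x'|^{-\beta},|y'|^{-\alpha},|y'|^{-\beta}$ destroy translation invariance in the $x'$-variables. Consequently a hyperplane orthogonal to a direction $e\in\{0\}\times\mathbb{R}^n$ may be moved in from infinity and stopped at an arbitrary position (producing a center $x_0\in\mathbb{R}^n$), while one orthogonal to $e\in\mathbb{R}^m\times\{0\}$ can only be pushed up to the hyperplane through the origin. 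For a unit vector $e$ and $\mu\in\mathbb{R}$, set $T^e_\mu=\{x:x\cdot e=\mu\}$, $\Sigma^e_\mu=\{x:x\cdot e<\mu\}$, let $x^\mu$ denote the reflection of $x$ across $T^e_\mu$, and put $w^u_\mu(x)=u(x^\mu)-u(x)$, $w^v_\mu(x)=v(x^\mu)-v(x)$; the goal is to show $w^u_\mu\ge 0$ and $w^v_\mu\ge 0$ on $\Sigma^e_\mu$ for every admissible $\mu$.

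Step 1 (directions $e\in\{0\}\times\mathbb{R}^n$). Here reflection fixes $|x'|$ and $|y'|$. Splitting the integral representation of $u$ in \eqref{system3} over $\Sigma^e_\mu$ and its reflection, and using $|x-y|\le|x-y^\mu|$ for $x,y\in\Sigma^e_\mu$ together with the mean value theorem, one bounds $(w^u_\mu)_-$ on the bad set $\{w^u_\mu<0\}$ by $p_2\int_{\{w^v_\mu<0\}\cap\Sigma^e_\mu}|x'|^{-\alpha}|x-y|^{-\lambda}|y'|^{-\beta}v^{p_2-1}(y)(w^v_\mu)_-(y)\,dy$, and symmetrically for $v$. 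Applying the Stein-Weiss inequality \eqref{HSWF} on the half-space, followed by H\"older's inequality, yields $\|(w^u_\mu)_-\|\le C\,\|v\|^{p_2-1}_{L^{p_2+1}(\Sigma^e_\mu)}\,\|(w^v_\mu)_-\|$ and $\|(w^v_\mu)_-\|\le C\,\|u\|^{p_1-1}_{L^{p_1+1}(\Sigma^e_\mu)}\,\|(w^u_\mu)_-\|$ in the norms dictated by \eqref{system3}; composing these and using $u\in L^{p_1+1}$, $v\in L^{p_2+1}$, the resulting prefactor is strictly less than $1$ once $\mu$ is sufficiently negative, which forces the bad sets to be of measure zero, so the moving plane can start. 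Let $\mu_0$ be the supremum of admissible positions. Standard arguments rule out $\mu_0=+\infty$; and at $\mu_0$ either $w^u_{\mu_0}\equiv w^v_{\mu_0}\equiv 0$ — symmetry about $T^e_{\mu_0}$ — or $w^u_{\mu_0},w^v_{\mu_0}>0$ in the interior of $\Sigma^e_{\mu_0}$ (from the integral representation), in which case the estimate above lets one move past $\mu_0$, a contradiction. Carrying this out for every $e\in\{0\}\times\mathbb{R}^n$ and normalizing the first center by a translation in $x''$, the symmetry hyperplanes pass through a common point $x_0\in\{0\}\times\mathbb{R}^n$, and the monotonicity built into the moving plane shows $u,v$ are radially symmetric and decreasing in $x''$ about $x_0$. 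Restricting to $\{0\}\times\mathbb{R}^n$ gives that part of the conclusion.

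Step 2 (directions $e\in\mathbb{R}^m\times\{0\}$). Now $\mu$ is moved from $-\infty$ only up to $0$. The additional geometric input is that for $x,y\in\Sigma^e_\mu$ with $\mu\le 0$ one has $|(x^\mu)'|\le|x'|$ and $|(y^\mu)'|\le|y'|$; combined with $|x-y^\mu|\ge|x-y|$, this makes the reflected weight and kernel factors in the decomposition of $w^u_\mu$ and $w^v_\mu$ enter with the correct sign, so the Stein-Weiss-plus-H\"older estimate closes exactly as in Step 1 (the presence of two distinct weights per equation is why \eqref{HSWF} rather than the plain Hardy--Littlewood--Sobolev inequality is needed here). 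One checks that the plane reaches $T^e_0$: it cannot halt at some $\mu_0\in(-\infty,0)$, by the same dichotomy as above. Hence $w^u_\mu,w^v_\mu\ge 0$ on $\Sigma^e_\mu$ for every $\mu\le 0$; running the mirror process from $+\infty$ down to $0$ gives the reverse inequalities on $\{x\cdot e>0\}$, and the two together at $\mu=0$ force $u,v$ to be symmetric about $T^e_0$ — a hyperplane through the origin — with monotonicity away from it. Since $e\in\mathbb{R}^m\times\{0\}$ is arbitrary, $u,v$ are radially symmetric and radially decreasing in $x'$ about the origin of $\mathbb{R}^m$; restricting to $\mathbb{R}^m\times\{0\}$ completes the proof.

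The main obstacles I anticipate are two. First, the preliminary bookkeeping: one should upgrade the integrability of $(u,v)$ by a regularity-lifting argument as in \cite{CLO} so that the compositions of H\"older's and the Stein-Weiss inequality are legitimate and the prefactors genuinely tend to $0$ as $\mu\to\pm\infty$. Second, and more delicate, the treatment of the reflected weights $|x'|^{-\alpha},|x'|^{-\beta}$ in Step 2: one must exploit the monotonicity $|(x^\mu)'|\le|x'|$ valid on $\Sigma^e_\mu$ only for $\mu\le 0$ — this is precisely what obstructs pushing those planes past the origin and is responsible for the ``about the origin'' conclusion in $\mathbb{R}^m$ — together with ruling out a premature stop of the moving plane before $T^e_0$, which relies on the strict positivity of $u$ and $v$ furnished by the integral system \eqref{system3}. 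I expect this second point to be the crux of the argument.
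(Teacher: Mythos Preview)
Your proposal is correct and follows essentially the same route as the paper's proof: moving planes in integral form applied separately in the two blocks, with the Stein--Weiss inequality \eqref{HSWF} plus H\"older closing the iteration, and the strict weight comparison $|(x^{\mu})'|<|x'|$ on $\Sigma^e_\mu$ for $\mu<0$ used to rule out a premature stop in the $x'$-directions---exactly the crux you flag as your second obstacle (the paper in fact writes out only this harder $x'$-case and leaves the $x''$-directions implicit). One small note: your first anticipated obstacle, regularity lifting, is not carried out in the paper and is not needed, since the hypothesis $(u,v)\in L^{p_1+1}\times L^{p_2+1}$ together with the scaling relation $\frac{1}{p_1+1}+\frac{1}{p_2+1}=\frac{\alpha+\beta+\lambda}{n+m}$ already makes the exponents compatible with a direct application of \eqref{HSWF} followed by H\"older.
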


\medskip

\section{The Proof of Theorem \ref{thm1}}
In this section, we will give the proof of attainability of the best constant for the inequality \eqref{HSW}. Though our proof is inspired by  Lions' proof of the existence of extremals for the classical Sobolev inequalities,  the loss of translation invariance  for the Stein-Weiss inequalities (namely, doubly weighted Hardy-Littlewood-Sobolev inequalities) presents additional difficulty because of the presence of weights. Moreover, lack of the analogous  compact imbedding associated with the integral inequalities produces more difficulties in applying directly the Lions concentration compactness principle.

\medskip

Assume that $\{\tilde{g}_{i}\}_i$ is a maximizing sequence for problem \eqref{mini}, namely $\|\tilde{g_i}|v|^{\beta}\|_{L^p(\mathbb{H}^n)}=1$ and
$\lim\limits_{i\rightarrow +\infty}\|I_{\lambda}(\tilde{g_i})|u|^{-\alpha}\|_{L^q(\mathbb{H}^n)}=C_{Q,\alpha,\beta,p,q'}$.
Since $\int_{\mathbb{H}^n}|\tilde{g_i}|^p|v|^{\beta p}dv=1$ for all $i\in \mathbb{N}$, there exists $r_i>0$ such that
$$\int_{B_\mathbb{H}(0, r_i)}|\tilde{g_i}|^p|v|^{\beta p}dv=\frac{1}{2}.$$  We define a new sequence $\{g_i\}_i$ given by
$$g_i(v):=r_i^{(Q+\beta p)/p}\widetilde{g}_i(r_iv)$$
for all $i\in \mathbb{N}$ and $v=(z,t)\in \mathbb{H}^n$. Clearly $\{g_i\}_i$ is also a maximizing sequence for problem \eqref{mini}.
Moreover, we also have that
$$\int_{B_\mathbb{H}(0,1)}|g_i(v)|^p|v|^{\beta p}dv=\frac{1}{2}$$
for all $i\in \mathbb{N}$. We define the measure:
$$\mu_i:=|g_i|^p|v|^{\beta p}dv \ \ and\ \ \nu_i:=|I_{\lambda}(g_i)|^q|u|^{-\alpha q}du.$$
Then it follows that there exist two bounded measures $\mu$ and $\nu$ such that
$$\mu_i\rightharpoonup \mu \ \ and\ \ \ \nu_i\rightharpoonup \nu \ \ weakly \ in\  the\ sense \ of\  measures\  as\  i\rightarrow\infty.$$
Furthermore, by the lower semi-continuity in the sense of measure, one can get
$$\int_{\mathbb{H}^n}d\mu\leq \lim_{i\rightarrow +\infty}\int_{\mathbb{H}^n}d\mu_i=1\ \ {\rm and}\ \ \int_{\mathbb{H}^n}d\nu\leq \lim_{i\rightarrow +\infty}\int_{\mathbb{H}^n}d\nu_i=C^q_{Q,\alpha,\beta,p,q'}.$$
We now apply Lions' first concentration compactness (see \cite{Lions1,Lions2}) to the sequence of measure $\{\mu_i\}$.
\medskip
\begin{lemma}
There exists a subsequence of $\{\mu_i\}$ such that one of the following condition holds:
(a)(Compactness) There exists $\{v_i\}$ in $\mathbb{H}^n$ such that for each $\epsilon>0$ small enough, we can find $R_{\epsilon}>0$
with $$\int_{B_\mathbb{H}(v_i,R_\varepsilon)}d\mu_i\geq1-\varepsilon \ \ {\rm for}\ {\rm all}\ i.$$
(b)(Vanishing) For all $R>0$, there holds:
$$\lim_{i\rightarrow\infty}(\sup_{v\in \mathbb{H}^n}\int_{B_{\mathbb{H}}(v,R)}d\mu_i)=0 .$$
(c)(Dichotomy)\ There exists $k \in(0,1)$ such that for any $\varepsilon >0$, there exist $R_\varepsilon>0$ and a sequence $\{v_i^\varepsilon\}_{i\in \mathbb{N}}$, with the following property: given $R'>R_\varepsilon$, there are non-negative measures $\mu_i^1$ and $\mu_i^2$ such that
$$0\leq \mu_i^1+\mu_i^2\leq \mu_i,\ \ Supp(\mu_i^1)\subset  B_{\mathbb{H}}(v_i^\varepsilon,R_\varepsilon),
\ \  Supp(\mu_i^2)\subset \mathbb{H}^n\setminus B_{\mathbb{H}}(v_i^\varepsilon,R'),$$
$$ \mu_i^1=\mu_i|_{B_{\mathbb{H}}(v_i^\varepsilon,R_\varepsilon)},
\ \ \mu_i^2= \mu_i|_{\mathbb{H}^n\setminus B_{\mathbb{H}}(v_i^\varepsilon,R')},$$
$$\limsup_{i\rightarrow\infty}\Big(|k- \int_{\mathbb{H}^n}d\mu_i^1|+|(1-k)-\int_{\mathbb{H}^n}d\mu_i^2|\Big)\leq \varepsilon.$$
\end{lemma}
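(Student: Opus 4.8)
The plan is to establish the classical first concentration--compactness lemma of Lions, transplanted to $\mathbb{H}^n$ endowed with the left-invariant Kor\'anyi metric $d(u,v)=|u^{-1}v|$; nothing about the group structure enters beyond the facts that $d$ is a metric, the balls $B_{\mathbb{H}}(v,R)$ exhaust $\mathbb{H}^n$ as $R\to\infty$, and each $\mu_i$ is a finite Borel measure with $\mu_i(\mathbb{H}^n)=1$. The engine is the \emph{concentration function}
\[
Q_i(R):=\sup_{v\in\mathbb{H}^n}\mu_i\big(B_{\mathbb{H}}(v,R)\big),\qquad R>0,
\]
which is nondecreasing in $R$ with values in $[0,1]$. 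By Helly's selection theorem we pass to a subsequence (not relabeled --- this is the subsequence named in the statement) along which $Q_i(R)\to Q(R)$ at every continuity point of a nondecreasing limit $Q:(0,\infty)\to[0,1]$, and we set $k:=\lim_{R\to\infty}Q(R)\in[0,1]$. The three alternatives correspond to $k=0$, $0<k<1$, and $k=1$. If $k=0$ then $Q\equiv 0$, hence $Q_i(R)\to 0$ for every $R>0$, which is precisely the vanishing alternative (b).

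Suppose $k=1$. Fix once and for all a small $\varepsilon_0\in(0,1/2)$ and a continuity point $R_0$ of $Q$ with $Q(R_0)>1-\varepsilon_0$; then for all large $i$ there is $v_i$ with $\mu_i\big(B_{\mathbb{H}}(v_i,R_0)\big)>1-\varepsilon_0$, and we fix $v_i$ arbitrarily for the remaining finitely many indices. Given any $\varepsilon\in(0,\varepsilon_0)$, choose a continuity point $R$ with $Q(R)>1-\varepsilon/2$; then for large $i$ there is $w_i$ with $\mu_i\big(B_{\mathbb{H}}(w_i,R)\big)>1-\varepsilon$. Since $(1-\varepsilon)+(1-\varepsilon_0)>1$, the balls $B_{\mathbb{H}}(w_i,R)$ and $B_{\mathbb{H}}(v_i,R_0)$ intersect, hence $d(v_i,w_i)<R+R_0$ and $B_{\mathbb{H}}(v_i,2R+R_0)\supset B_{\mathbb{H}}(w_i,R)$. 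Absorbing the finitely many small indices by enlarging the radius (each $\mu_i$ being a finite measure, its mass concentrates in a large ball about any fixed center) produces $R_\varepsilon$ with $\mu_i\big(B_{\mathbb{H}}(v_i,R_\varepsilon)\big)\ge 1-\varepsilon$ for \emph{all} $i$, which is alternative (a). I expect this step --- forcing a single center sequence $\{v_i\}$ to work uniformly in $\varepsilon$ and for every index $i$ --- to be the main obstacle, since the naive near-maximizing centers drift with $\varepsilon$ and the associated radii need not stay bounded in $i$; the comparison with a fixed small-$\varepsilon_0$ anchor above is precisely what repairs this.

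Suppose $0<k<1$ and fix $\varepsilon>0$. Choose a continuity point $R_\varepsilon$ of $Q$ with $k-\varepsilon/2<Q(R_\varepsilon)\le k$; then for large $i$ we have $Q_i(R_\varepsilon)>k-\varepsilon/2$, so there are centers $v_i^\varepsilon$ with $\mu_i\big(B_{\mathbb{H}}(v_i^\varepsilon,R_\varepsilon)\big)>k-\varepsilon/2$. Given $R'>R_\varepsilon$, define
\[
\mu_i^1:=\mu_i|_{B_{\mathbb{H}}(v_i^\varepsilon,R_\varepsilon)},\qquad
\mu_i^2:=\mu_i|_{\mathbb{H}^n\setminus B_{\mathbb{H}}(v_i^\varepsilon,R')} ,
\]
which are nonnegative, satisfy $0\le\mu_i^1+\mu_i^2\le\mu_i$, and have the supports required in (c). Since $k-\varepsilon/2<\mu_i^1(\mathbb{H}^n)\le Q_i(R_\varepsilon)\to Q(R_\varepsilon)\le k$, we get $|k-\mu_i^1(\mathbb{H}^n)|\le\varepsilon/2$ for large $i$. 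Likewise $\mu_i^2(\mathbb{H}^n)=1-\mu_i\big(B_{\mathbb{H}}(v_i^\varepsilon,R')\big)$ and
\[
k-\varepsilon/2<\mu_i\big(B_{\mathbb{H}}(v_i^\varepsilon,R_\varepsilon)\big)\le\mu_i\big(B_{\mathbb{H}}(v_i^\varepsilon,R')\big)\le Q_i(R')\to Q(R')\le k,
\]
so $|(1-k)-\mu_i^2(\mathbb{H}^n)|\le\varepsilon/2$ for large $i$; adding the two estimates and passing to $\limsup_{i\to\infty}$ yields the final displayed inequality of (c). Thus the whole argument reduces to the elementary monotonicity properties of the functions $Q_i$ together with Helly's theorem, the only genuinely delicate point being the uniform control of the translation sequence in the compactness alternative.
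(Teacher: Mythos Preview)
Your proposal is correct and is precisely the standard Lions concentration--function argument (define $Q_i(R)=\sup_v\mu_i(B_{\mathbb H}(v,R))$, apply Helly, and split into the three cases $k=0$, $k=1$, $0<k<1$). The paper does not actually prove this lemma at all: it simply states it and refers to Lions' original papers, so your argument is exactly the proof the paper is invoking by citation. One minor remark: the paper calls $d(u,v)=|u^{-1}v|$ a ``quasi-metric'' while you treat it as a metric; in fact the Kor\'anyi--Cygan gauge does satisfy the genuine triangle inequality, so your assumption is justified, and in any case the compactness step (the only place you use it) survives a quasi-triangle inequality $d(u,w)\le C\big(d(u,v)+d(v,w)\big)$ with only a harmless change in the radius $R_\varepsilon$.
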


\begin{lemma}\label{lem3}
Let $\{g_i\}_i$ is a maximizing sequence for problem \eqref{mini}, then compactness (a) holds. In particular, we also have that $\int_{\mathbb{H}^n}d\mu=1$.
\end{lemma}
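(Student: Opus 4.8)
The plan is to rule out the vanishing and dichotomy alternatives of the concentration compactness lemma, leaving compactness as the only possibility, and then to upgrade compactness to the statement $\int_{\mathbb{H}^n}d\mu = 1$. First I would dispose of \emph{vanishing}. Suppose $\sup_{v\in\mathbb{H}^n}\int_{B_\mathbb{H}(v,R)}d\mu_i\to 0$ for every $R>0$. A covering argument on the Heisenberg group (cover $\mathbb{H}^n$ by balls of radius $R$ with bounded overlap, use Hölder in the overlap) shows that $g_i\to 0$ strongly in $L^s(|v|^{\beta p}\,dv)$ for any exponent $s$ between $p$ and the relevant local Sobolev-type exponent; combined with the Stein-Weiss inequality \eqref{HSW} applied on each ball, this forces $\|I_\lambda(g_i)|u|^{-\alpha}\|_{L^q}\to 0$, contradicting that $\{g_i\}$ is maximizing with limit $C_{Q,\alpha,\beta,p,q'}>0$. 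The key point making this work cleanly is the normalization $\int_{B_\mathbb{H}(0,1)}d\mu_i = \tfrac12$, which keeps mass from escaping to spatial infinity without simultaneously vanishing locally; in fact that same normalization already contradicts vanishing directly, since it exhibits a fixed ball carrying mass $\tfrac12$.

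Next I would rule out \emph{dichotomy}. This is the step I expect to be the main obstacle, precisely because the Stein-Weiss functional is \emph{not} translation invariant: the weights $|u|^{-\alpha}$ and $|v|^{-\beta}$ break the usual argument that splits a maximizing sequence into two pieces living far apart and derives a strict superadditivity inequality $C^q > (k^{q/p} + (1-k)^{q/p}) C^q$ from $q>p$. The remedy is to exploit the dilation structure instead of translations. Write $g_i = g_i^1 + g_i^2 + h_i$ where $g_i^1$ is essentially supported in $B_\mathbb{H}(v_i^\varepsilon, R_\varepsilon)$ and $g_i^2$ outside $B_\mathbb{H}(v_i^\varepsilon, R')$, with $\|h_i\|$ small. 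One must track the centers $v_i^\varepsilon$: either they stay bounded, or $|v_i^\varepsilon|\to\infty$. If $|v_i^\varepsilon|\to\infty$, then on the support of $g_i^1$ the weight $|v|^{-\beta}$ (and correspondingly $|u|^{-\alpha}$ for its potential) behaves like the \emph{constant} $|v_i^\varepsilon|^{-\beta}$ up to factors tending to $1$, so that piece sees effectively the unweighted Hardy-Littlewood-Sobolev inequality, whose sharp constant — after accounting for the weight prefactors and the scaling relation $\tfrac1p+\tfrac1{q'}+\tfrac{\lambda+\alpha+\beta}{Q}=2$ — is strictly dominated; one then runs the standard $q>p$ superadditivity contradiction for this asymptotically translation-invariant model. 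If instead $v_i^\varepsilon$ stays bounded, the origin (where the weights are singular) is "visible" to the inner piece, and one plays the two pieces against each other using the decomposition of $I_\lambda(g_i)^q$ into a main part plus a cross term that is $o(1)$ because the supports are separated by $R'-R_\varepsilon\to\infty$ while the kernel $|u^{-1}v|^{-\lambda}$ decays; the resulting inequality $C^q \le C^q(\theta^{q/p} + (1-\theta)^{q/p}) + \varepsilon'$ with $0<\theta<1$ fixed and $q>p$ is again contradictory as $\varepsilon\to 0$. In both sub-cases the crucial quantitative input is that replacing the true sharp constant on a piece of mass $\theta$ by $\theta^{q/p}C^q$ is \emph{strictly} lossy when $q>p$, so the two pieces cannot reconstitute the full constant.

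Having excluded vanishing and dichotomy, compactness (a) holds: there are centers $v_i$ and, for each $\varepsilon$, a radius $R_\varepsilon$ with $\int_{B_\mathbb{H}(v_i,R_\varepsilon)}d\mu_i \ge 1-\varepsilon$. Here I would again check that the $v_i$ can be taken bounded — if $|v_i|\to\infty$ the mass $\tfrac12$ sitting in $B_\mathbb{H}(0,1)$ could not be accounted for once $R_\varepsilon < \tfrac12|v_i|$ — so after translating by a bounded amount (or directly) the measures $\mu_i$ are tight. Tightness gives $\int_{\mathbb{H}^n}d\mu = \lim_i \int_{\mathbb{H}^n}d\mu_i = 1$, i.e. no mass escapes to infinity and, combined with the earlier lower-semicontinuity bound $\int d\mu\le 1$, the total mass is exactly $1$. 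This tightness of $\{\mu_i\}$ (equivalently, of $\{g_i\}$ in the weighted $L^p$ space) is exactly what is needed in the next step of the argument to pass to a strongly convergent subsequence and produce the extremal $g$.
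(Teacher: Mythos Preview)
Your treatment of vanishing and of the final tightness step ($\int d\mu = 1$) matches the paper: the normalization $\int_{B_\mathbb{H}(0,1)} d\mu_i = \tfrac12$ immediately kills vanishing, and for tightness one checks that the compactness centers $v_i$ must stay bounded (otherwise the fixed mass $\tfrac12$ in $B_\mathbb{H}(0,1)$ eventually lies outside $B_\mathbb{H}(v_i,R_\varepsilon)$, contradicting (a)), after which weak convergence against a cutoff gives $\int d\mu \ge 1-\varepsilon$.

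Your dichotomy argument, however, is more complicated than necessary, and your concern about the lack of translation invariance is misplaced. The paper neither splits into cases according to whether $|v_i^\varepsilon|$ stays bounded nor compares to the unweighted Hardy--Littlewood--Sobolev constant. Instead it applies the Stein--Weiss inequality \eqref{HSW} \emph{directly} to each of the two pieces $g_i\chi_{B_\mathbb{H}(v_i,R)}$ and $g_i\chi_{B_\mathbb{H}^c(v_i,R)}$: since \eqref{HSW} holds for every function with the same global sharp constant $C_{Q,\alpha,\beta,p,q'}$, no recentering is required and the location of the supports is irrelevant. The only technical point is to justify the splitting
\[
\|I_\lambda(g_i)|u|^{-\alpha}\|_q^q = \|I_\lambda(g_i\chi_{B_\mathbb{H}(v_i,R)})|u|^{-\alpha}\|_q^q + \|I_\lambda(g_i\chi_{B_\mathbb{H}^c(v_i,R)})|u|^{-\alpha}\|_q^q + o_R(1),
\]
and the paper does this via the Brezis--Lieb lemma, after first verifying that $I_\lambda(g_i\chi_{B_\mathbb{H}(v_i,R)})(u) \to I_\lambda(g_i)(u)$ pointwise as $R\to\infty$. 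Your proposed cross-term estimate (``kernel decays as supports separate'') aims at the same conclusion, but because $I_\lambda$ is nonlocal the two potentials $I_\lambda(g_i^1)$ and $I_\lambda(g_i^2)$ do not have disjoint supports, so the cross terms in the $L^q$ expansion are not automatically small; Brezis--Lieb is the clean device that makes this rigorous. Once the decomposition holds, the contradiction
\[
C_{Q,\alpha,\beta,p,q'}^q \le C_{Q,\alpha,\beta,p,q'}^q\bigl(k^{q/p}+(1-k)^{q/p}\bigr) + O(\varepsilon) + o(1) < C_{Q,\alpha,\beta,p,q'}^q
\]
follows from $q>p$ with no case analysis. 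In particular, your assertion in the unbounded-centers sub-case that the effective constant is ``strictly dominated'' by the Stein--Weiss constant is unneeded and would in any case require a separate justification.
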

\begin{proof}
Clearly, (b) can not occur because $\int_{B_\mathbb{H}(0,1)}|g_i(v)|^p|v|^{\beta p}dv=\frac{1}{2}$. Next, it suffices to show that (c) also does not happen.
We argue this by contradiction. Suppose that (c) occurs, then for any $\varepsilon>0$, we can find $R_{0}>0$ and $\{v_i\}\subset \mathbb{H}^n$ such that for any given $R\geq R_0$, there holds
\begin{equation}\label{c1}
\|g_i|v|^{\beta}\chi_{B_\mathbb{H}(v_i,R)}\|_p^p=k+ O(\varepsilon)\ \ {\rm and}\ \ \|g_i|v|^{\beta}\chi_{B_\mathbb{H}^c(v_i,R)}\|_p^p=1-k+ O(\varepsilon).
\end{equation}
We first claim that
$$\lim_{R\rightarrow \infty}I_{\lambda}(g_i\chi_{B_\mathbb{H}(v_i,R)})(u)=I_{\lambda}(g_i)(u)\ \ {\rm for}\ u\in \mathbb{H}^n.$$
In fact,
\begin{equation}\begin{split}
&|I_{\lambda}(g_i)(u)-I_{\lambda}(g_i\chi_{B_\mathbb{H}(v_i,R)})(u)|\\
&\ \ =|I_{\lambda}(g_i\chi_{B_\mathbb{H}^c(v_i,R)})(u)|\\
&\ \ =|\int_{\mathbb{H}^n}\frac{(g_i\chi_{B_\mathbb{H}^c(v_i,R)})(v)}{|u^{-1}v|^{\lambda}}dv|\\
&\ \ \leq \big(\int_{B_\mathbb{H}^c(v_i,R)}|g_i(v)|v|^{\beta}|^pdv\big)^{\frac{1}{p}}\big(\int_{B_\mathbb{H}^c(v_i,R)}|u^{-1}v|^{-\lambda p'}|v|^{-\beta p'}dv\big)^{\frac{1}{p'}}\\
&\ \ \rightarrow 0
\end{split}\end{equation}
as $R\rightarrow +\infty$. Now we can apply the Brezis-Lieb lemma (\cite{BL}) to obtain
$$\|I_{\lambda}(g_i)|u|^{-\alpha}\|_{q}^q=\|I_{\lambda}(g_i\chi_{B_\mathbb{H}(v_i,R)})|u|^{-\alpha}\|_{q}^q+
\|I_{\lambda}(g_i\chi_{B_\mathbb{H}^c(v_i,R)})|u|^{-\alpha}\|_{q}^q+o(1).$$
In light of the $|u|$ weighted Stein-Weiss inequality \eqref{HSW} and condition \eqref{c1}, we derive that
\begin{equation}\begin{split}
&\|I_{\lambda}(g_i\chi_{B_\mathbb{H}(v_i,R)})|u|^{-\alpha}\|_{q}^q+
\|I_{\lambda}(g_i\chi_{B_\mathbb{H}^c(v_i,R)})|u|^{-\alpha}\|_{q}^q+o(1)\\
&\ \ \leq C_{Q,\alpha,\beta, p,q'}^q\|g_i\chi_{B_\mathbb{H}(v_i,R)}|v|^{\beta}\|_p^q+C_{Q,\alpha,\beta, p,q'}^q\|g_i\chi_{B_\mathbb{H}^c(v_i,R)}|v|^{\beta}\|_p^q+o(1)\\
&\ \ \leq C_{Q,\alpha,\beta, p,q'}^q(k+O(\varepsilon))^{\frac{q}{p}}\leq C_{Q,\alpha,\beta, p,q'}^q(1-k+O(\varepsilon))^{\frac{q}{p}}+o(1)\\
&\ \ \leq C_{Q,\alpha,\beta, p,q'}^q(k^{\frac{q}{p}}+(1-k)^{\frac{q}{p}})+O(\varepsilon)+o(1)\\
&\ \ < C_{Q,\alpha,\beta, p,q'}^q,
\end{split}\end{equation}
which is a contradiction with $\|I_{\lambda}(g_i)|u|^{-\alpha}\|_{q}^q\rightarrow C_{Q,\alpha,\beta, p,q'}^q$. Hence compactness condition (a) must hold.
\medskip

Now, we turn to the proof of $\int_{\mathbb{H}^n}d\mu=1$. It suffices to show that $\int_{\mathbb{H}^n}d\mu \geq 1$.
 According to $\int_{B_\mathbb{H}(0,1)}|g_i|^p|v|^{\beta p}dv=\frac{1}{2}$, we claim that
$\{v_i\}$ must be a bounded sequence in $\mathbb{H}^n$. We argue this by contradiction. If $\{v_i\}$ is unbounded, then we can find subsequence
$v_i$ satisfying $|v_i|\rightarrow \infty$. In view of $(a)$, we obtain for any sufficiently small $\epsilon>0$, there exist
$R_\epsilon>0$ such that $$\int_{B_\mathbb{H}^c(v_i,R_\epsilon)}|g_i|^p|v|^{\beta p}dv\leq \epsilon.$$ On the other hand,
for any $v \in B_{\mathbb{H}}(0,1)$, there holds $|v-v_i|\geq |v_i|-|v|\rightarrow \infty$. Then it follows
that for sufficiently large $i$, there holds $$\int_{B_\mathbb{H}(0,1)}|g_i|^p|v|^{\beta p}dv\leq
\int_{B_\mathbb{H}^c(v_i,R_\epsilon)}|g_i|^p|v|^{\beta p}dv\leq \epsilon,$$
which is a contradiction. Then $v_i$ must be a bounded sequence in $\mathbb{H}^n$. Hence, we may assume for any $\varepsilon>0$, there exist $R_0$ such that for any $R\geq R_0$, there holds
$$\int_{B_\mathbb{H}(0,R)}d\mu_i\geq1-\varepsilon.$$
Then for any $R>R_0$ and $\phi \in C^{\infty}_c(\mathbb{H}^n)$ with $\phi|_{B_\mathbb{H}(0,R)}=1$, then by the weak convergence $\mu_i$ in the sense of measure, we obtain
\begin{equation}\begin{split}
\int_{\mathbb{H}^n}d\mu&\geq \int_{\mathbb{H}^n}\phi(v)d\mu\\
&=\lim_{i\rightarrow \infty}\int_{\mathbb{H}^n}|g_i|^p|v|^{\beta p}\phi(v)dv\\
&\geq \lim_{i\rightarrow \infty}\int_{B_\mathbb{H}(0,R)}|g_i|^p|v|^{\beta p}dv\\
&\geq \lim_{i\rightarrow \infty}\int_{B_\mathbb{H}(0,R_0)}d\mu_i
\geq 1-\varepsilon.\\
\end{split}\end{equation}
Let $\varepsilon\rightarrow 0$, we derive that $\int_{\mathbb{H}^n}d\mu \geq 1$. Then we accomplish the proof of Lemma \ref{lem3}.
\end{proof}

\begin{lemma}\label{lem4}
Let $\{g_{i}\}_i$ be a maximizing sequence satisfying
\begin{equation}\label{t1}
\|g_{i}(v)|v|^{\beta}\|_{L^{p}(\mathbb{H}^n)}=1\ \ {\rm and}\ \ \int_{B_{\mathbb{H}}(0,R)}|g_{i}(v)|^{p}|v|^{\beta p}dv\geq1-\varepsilon(R).
\end{equation}
we may assume that $g_{i}(v)|v|^{\beta}\rightarrow g(v)|v|^{\beta}$ weakly in $L^{p}(\mathbb{H}^n)$ (by passing to a subsequence if necessary). Then, by passing to a subsequence again  if necessary,
$$I_{\lambda}(g_{i})(u)|u|^{-\alpha}\rightarrow I_{\lambda}(g)(u)|u|^{-\alpha}\ \  a.e..$$
\end{lemma}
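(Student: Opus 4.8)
The plan is to reduce everything to the almost everywhere convergence $I_\lambda(g_i)(u)\to I_\lambda(g)(u)$, since $|u|^{-\alpha}$ is a fixed function that is finite for a.e. $u$. Writing $g_i=(g_i|v|^{\beta})\,|v|^{-\beta}$ and splitting the Riesz potential according to whether $|u^{-1}v|<1$ or $|u^{-1}v|\ge 1$, set
$$A_i(u):=\int_{\{|u^{-1}v|<1\}}\frac{g_i(v)}{|u^{-1}v|^{\lambda}}\,dv,\qquad B_i(u):=\int_{\{|u^{-1}v|\ge 1\}}\frac{g_i(v)}{|u^{-1}v|^{\lambda}}\,dv,$$
so that $I_\lambda(g_i)=A_i+B_i$, and define $A,B$ with $g_i$ replaced by $g$. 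I would treat $B_i$ and $A_i$ by two different mechanisms: weak convergence for the tail, compactness for the local piece.

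For the tail, fix $u\neq 0$ and consider the kernel $\mathcal K_u(v):=|v|^{-\beta}|u^{-1}v|^{-\lambda}\chi_{\{|u^{-1}v|\ge 1\}}$, so that $B_i(u)=\int_{\mathbb{H}^n}(g_i|v|^{\beta})\,\mathcal K_u\,dv$. I claim $\mathcal K_u\in L^{p'}(\mathbb{H}^n)$: away from $v=0$ and from infinity $\mathcal K_u$ is bounded (there is no singularity at $v=u$, since $|u^{-1}v|\ge 1$ there); near $v=0$ one has $\mathcal K_u(v)\lesssim_u |v|^{-\beta}$, which is locally in $L^{p'}$ because $\beta<Q/p'$; and for $|v|$ large $\mathcal K_u(v)\lesssim |v|^{-\beta-\lambda}$, which is in $L^{p'}$ at infinity because $(\beta+\lambda)p'>Q$, an inequality which is precisely equivalent to the hypothesis $\alpha<Q/q$ via the balance relation $\tfrac1{p'}=\tfrac{\lambda+\alpha+\beta}{Q}-\tfrac1q$. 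Since $g_i|v|^{\beta}\rightharpoonup g|v|^{\beta}$ in $L^{p}(\mathbb{H}^n)$, this gives $B_i(u)\to B(u)$ for every $u\neq 0$, hence a.e.

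The local part $A_i$ is the heart of the matter, and here the same duality argument breaks down, because the truncated kernel $|v|^{-\beta}|u^{-1}v|^{-\lambda}\chi_{\{|u^{-1}v|<1\}}$ need not belong to $L^{p'}$ — this already happens for Hardy--Littlewood--Sobolev ($\alpha=\beta=0$), where $\lambda p'>Q$. Instead I would use compactness. Observe that $A_i=g_i*k$ is the convolution of $g_i$ with the fixed kernel $k(w)=|w|^{-\lambda}\chi_{\{|w|<1\}}$, which lies in $L^1(\mathbb{H}^n)$ because $\lambda<Q$. Given $\varepsilon>0$, decompose $k=k_1+k_2$ with $k_1\in C_c^{\infty}(\mathbb{H}^n)$ and $\|k_2\|_{L^1}<\varepsilon$. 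On any fixed ball $\Omega\subset\mathbb{H}^n$ one has $\sup_i\|g_i\|_{L^1(\Omega')}<\infty$ for a suitable enlarged ball $\Omega'$ — this uses only $\|g_i|v|^{\beta}\|_{L^p}=1$ together with $\beta<Q/p'$ and H\"older — from which $\{g_i*k_1\}$ is equibounded and equicontinuous on $\overline{\Omega}$, hence precompact in $C(\overline{\Omega})$ by Arzel\`a--Ascoli, while $\|g_i*k_2\|_{L^1(\Omega)}\le\varepsilon\sup_i\|g_i\|_{L^1(\Omega')}$. Hence $\{A_i\}$ is precompact in $L^1(\Omega)$ for every ball $\Omega$; exhausting $\mathbb{H}^n$ by balls and diagonalizing, a further subsequence of $\{A_i\}$ converges a.e. on $\mathbb{H}^n$. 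Testing against $\phi\in C_c^{\infty}(\mathbb{H}^n)$ and using the weak convergence of $g_i|v|^{\beta}$ once more identifies this limit as $A=g*k$.

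Combining along the common subsequence, $I_\lambda(g_i)=A_i+B_i\to A+B=I_\lambda(g)$ a.e., and multiplication by $|u|^{-\alpha}$ concludes. The main obstacle, as indicated, is precisely the local piece $A_i$: because the Stein--Weiss estimate is scaling critical, truncating the Riesz kernel near its singularity does not lower the operator norm, so one cannot estimate $A_i-A$ directly in any weighted Lebesgue norm; the way out is to replace the missing $L^{p'}$-duality by the genuine smoothing of convolution with an $L^1$ kernel, at the cost of obtaining convergence only along a subsequence — which is exactly what the statement allows. (The localization property in \eqref{t1} is not otherwise needed for this lemma; it enters elsewhere to rule out loss of mass in the limit.)
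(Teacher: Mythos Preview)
Your argument is correct and takes a genuinely different route from the paper's.

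The paper proves convergence \emph{in measure} by a triple truncation: it cuts the target variable $u$ to a large ball $B_M$, cuts the source variable $v$ to a large ball $B_R$, and then truncates the kernel itself by removing the region $|u^{-1}v|<\eta$. The first two cuts are controlled using the tightness hypothesis \eqref{t1} together with the Stein--Weiss inequality; the kernel truncation error is controlled in $L^1$ via the estimate $\int_{B_{\mathbb H}(v,\eta)}|u^{-1}v|^{-\lambda}|u|^{-\alpha}\,du\lesssim\eta^{Q-\lambda-\alpha}$; and on the remaining piece the truncated kernel lies in $L^{p'}$, so weak convergence gives pointwise convergence. Chebyshev then assembles everything into convergence in measure.

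Your decomposition is sharper: a single split of the kernel at scale $1$ suffices. The far piece $B_i$ is handled by the same duality mechanism as the paper's truncated kernel, and for the near piece you observe that $A_i=g_i\ast k$ with $k\in L^1(\mathbb H^n)$, so genuine compactness (approximate $k$ by $C_c^\infty$, Arzel\`a--Ascoli, $L^1$-error) produces the a.e.\ convergent subsequence directly. Two remarks are worth making. First, your proof does not use the tightness condition \eqref{t1}; only the uniform bound $\|g_i|v|^\beta\|_{L^p}=1$ enters. So you have in fact proved a slightly stronger statement. Second, the paper's approach has a hidden payoff: the intermediate estimate \eqref{tight}, namely $\|I_\lambda(g_i)|u|^{-\alpha}\chi_{B_{\mathbb H}^c(0,M)}\|_{L^q}\le\epsilon(M)$, is recycled in the proof of Theorem~\ref{thm1} to show $\nu(\mathbb H^n)=C_{Q,\alpha,\beta,p,q'}^q$. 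If you replace the paper's proof of this lemma by yours, you will need to supply that tightness of $\{\nu_i\}$ separately---but that is a short computation from \eqref{t1}, and the overall structure remains cleaner.
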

\begin{proof}
We show that $I_{\lambda}(g_{i})(u)|u|^{-\alpha}\rightarrow I_{\lambda}(g)(u)|u|^{-\alpha}$ in measure to ensure the existence of a pointwisely convergent subsequence of $\{g_{i}\}$. Observe that for $M>R$, it follows from \eqref{HSW} and \eqref{t1} that
\begin{equation}\begin{split}
&\|I_{\lambda}(g_{i})(u)|u|^{-\alpha}\chi_{B_{\mathbb{H}}^c(0,M)}\|_{L^{q}(\mathbb{H}^n)}\\
\leq &\|I_{\lambda}(g_{i}\chi_{B_{\mathbb{H}}(0,R)})(u)|u|^{-\alpha}\chi_{B_{\mathbb{H}}^{c}(0,M)}\|_{L^{q}(\mathbb{H}^n)}
+\|I_{\lambda}(g_{i}\chi_{B_{\mathbb{H}}^{c}(0,R)})(u)|u|^{-\alpha}\chi_{B_{\mathbb{H}}^{c}(0,M)}\|_{L^{q}(\mathbb{H}^n)}\\
\leq &\|I_{\lambda}(g_{i}\chi_{B_{\mathbb{H}}(0,R)})(u)|u|^{-\alpha}\chi_{B_{\mathbb{H}}^{c}(0,M)}\|_{L^{q}(\mathbb{H}^n)}
+C_{n,\alpha,\beta,p,q'}\|(g_{i}\chi_{B_{\mathbb{H}}^{c}(0,R)})(v)|v|^{\beta}\|_{L^{p}(\mathbb{H}^n)}\\
\leq &\|I_{\lambda}(g_{i}\chi_{B_{\mathbb{H}}(0,R)})(u)|u|^{-\alpha}\chi_{B_{\mathbb{H}}^{c}(0,M)}\|_{L^{q}(\mathbb{H}^n)}
+\epsilon(R).
\end{split}\end{equation}

Since $\frac{1}{p}+\frac{1}{q'}+\frac{\lambda+\alpha+\beta}{Q}=2$ and $\beta<\frac{Q}{p'}$,  we get $Q<(\alpha+\lambda)q$ and
$$\|g_{i}\chi_{B_{\mathbb{H}}(0,R)}\|_{L^{1}(\mathbb{H}^n)}\leq\|g_{i}(v)|v|^{\beta}\|_{L^{p}(\mathbb{H}^n)}\|\frac{1}{|v|^{\beta}}\chi_{B_{\mathbb{H}}(0,R)}\|_{L^{p'}(\mathbb{H}^n)}<\infty.$$
Using Minkowski's inequality, we obtain
\begin{equation}\begin{split}
&\|I_{\lambda}(g_{i}\chi_{B_{\mathbb{H}}(0,R)})(u)|u|^{-\alpha}\chi_{B_{\mathbb{H}}^{c}(0,M)}\|_{L^{q}(\mathbb{H}^n)}\\
\leq&\|g_{i}\chi_{B_{\mathbb{H}}(0,R)}\|_{L^{1}(\mathbb{H}^n)}\left(\int_{|u|\geq M}\frac{1}{(|u|-R)^{\lambda q}}|u|^{-\alpha q}du\right)^{\frac{1}{q}}\\
\lesssim&\|g_{i}|v|^{\beta}\|_{L^{p}_{B_{\mathbb{H}}(0,R)}}\||v|^{-\beta}\|_{L^{p'}_{B_{\mathbb{H}}(0,R)}}\frac{1}{M^{\alpha+\lambda-\frac{Q}{q}}}\rightarrow 0
\end{split}\end{equation}
for every fixed $R$ as $M\rightarrow\infty$. Therefore, we have
\begin{equation}\label{tight}
\|I_{\lambda}(g_{i})(u)|u|^{-\alpha}\chi_{B_{\mathbb{H}}^{c}(0,M)}\|_{L^{q}(\mathbb{H}^n)}\leq\epsilon(M)
\end{equation}
and that is,
$$\|I_{\lambda}(g_{i})(u)|u|^{-\alpha}-I_{\lambda}(g_{i})(u)|u|^{-\alpha}\chi_{B_{\mathbb{H}}(0,M)}\|_{L^{q}(\mathbb{H}^n)}\leq\epsilon(M).$$
Since $g_{i}(v)|v|^{\beta}\rightarrow g(v)|v|^{\beta}$ weakly in $L^{p}(\mathbb{H}^n)$, we have
$$\|g(v)|v|^{\beta}\chi_{B_{\mathbb{H}}^{c}(0,R)}\|^{p}_{L^{p}(\mathbb{H}^n)}\leq\liminf_{j\rightarrow\infty}\|g_{i}(v)|v|^{\beta}\chi_{B_{\mathbb{H}}^{c}(0,R)}\|^{p}_{L^{p}(\mathbb{H}^n)}\leq\epsilon(R).$$
Similarly, for $g$ one can derive,
$$\|I_{\lambda}(g)(u)|u|^{-\alpha}-I_{\lambda}(g)(u)|u|^{-\alpha}\chi_{B_{\mathbb{H}}(0,M)}\|_{L^{q}(\mathbb{H}^n)}\leq\epsilon(M).$$
Therefore, given $k>0$,
\begin{equation}\begin{split}
&|\{|I_{\lambda}(g_{i})(u)|u|^{-\alpha}-I_{\lambda}(g)(u)|u|^{-\alpha}|\geq15k\}|\\
\leq&|\{|I_{\lambda}(g_{i})(u)|u|^{-\alpha}-I_{\lambda}(g_{i})(u)|u|^{-\alpha}\chi_{B_{M(0)}}(u)|\geq 5k\}|+\\
&|\{|I_{\lambda}(g_{i})(u)|u|^{-\alpha}\chi_{B_{\mathbb{H}}(0,M)}(u)-I_{\lambda}(g)(u)|u|^{-\alpha}\chi_{B_{\mathbb{H}}(0,M)}(u)|\geq 5k\}|+\\
&|\{|I_{\lambda}(g)(u)|u|^{-\alpha}\chi_{B_{\mathbb{H}}(0,M)}(u)-I_{\lambda}(g)(u)(u)|u|^{-\alpha}|\geq 5k\}|\\
\leq&2\big(\frac{\epsilon(M)}{5K}\big)^{q}+|\{|I_{\lambda}(g_{i})(u)|u|^{-\alpha}-I_{\lambda}(g)(u)|u|^{-\alpha}|\geq5k\}\cap B_{\mathbb{H}}(0,M)|\\
\end{split}\end{equation}
Thus, it remains to estimate the second term above. Denote
$$I_{\lambda}^{\eta}(g)(u)=\int_{B_{\mathbb{H}}^{c}(u,\eta)}\frac{g(v)}{|u^{-1}v|^{\lambda}}dv,$$
then
$$I_{\lambda}^{\eta}(g_{i}\chi_{B_{\mathbb{H}}(0,R)})(u)|u|^{-\alpha}\rightarrow I_{\lambda}^{\eta}(g\chi_{B_{\mathbb{H}}(0,R)})(u)|u|^{-\alpha}$$
for all $u\in\mathbb{H}^n$ because
$$|u^{-1}v|^{-\lambda}\chi_{B_{\mathbb{H}}(0,R)}\chi_{B_{\mathbb{H}}^{c}(u,\eta)}\in L^{p'}(\mathbb{H}^n)$$
for any fixed $u\in\mathbb{H}^n$ and $\eta>0$. Therefore, $I_{\lambda}^{\eta}(g_{i}\chi_{B_{R}(0)})(u)|u|^{-\alpha}\rightarrow I_{\lambda}^{\eta}(g\chi_{B_{\mathbb{H}}(0,R)})(u)|u|^{-\alpha}$ locally in measure, which means,
\begin{equation}\label{bu1}
|\{|I^{\eta}_{\lambda}(g_{i}\chi_{B_{\mathbb{H}}(0,R)})(u)|u|^{-\alpha}-I^{\eta}_{\lambda}(g\chi_{B_{\mathbb{H}}(0,R)})(u)|u|^{-\alpha}|\geq k\}\cap B_{\mathbb{H}}(0,M)|=o(1).
\end{equation}
On the other hand, since $\lambda+\alpha+\beta\leq Q$, we can derive that
\begin{equation*}\begin{split}
&\|I_{\lambda}(g_{i}\chi_{B_{\mathbb{H}}(0,R)})(u)|u|^{-\alpha}-I^{\eta}_{\lambda}(g_{i}\chi_{B_{\mathbb{H}}(0,R)})(u)|u|^{-\alpha}\|_{L^{1}(\mathbb{H}^n)}\\
&\ \ =\int_{\mathbb{H}^n}\left|\int_{B_{\mathbb{H}}(u,\eta)}\frac{g_{i}(v)\chi_{B_{\mathbb{H}}(0,R)}}{|u^{-1}v|^{\lambda}|u|^{\alpha}}dv\right|du\\
&\ \ \leq \Big\|g_{i}(v)\chi_{B_{\mathbb{H}}(0,R)}\Big\|_{L^{1}(\mathbb{H}^n)}\cdot \Big\|\int_{B_{\mathbb{H}}(v,\eta)}\frac{1}{|u^{-1}v|^{\lambda}|u|^{\alpha}}du\Big\|_{L^{\infty}(\mathbb{H}^n)}\\
&\ \ \leq C {\eta}^{Q-\lambda-\alpha}\rightarrow 0
\end{split}\end{equation*}
for every fixed $R$ as $\eta\rightarrow0$, where we use the fact
\begin{equation}\begin{split}
\int_{B_{\mathbb{H}}(v,\eta)}\frac{1}{|u^{-1}v|^{\lambda}|u|^{\alpha}}du&\leq \int_{B_{\mathbb{H}}(v,\eta)\cap\{|u^{-1}v|\geq |u|\}}\frac{1}{|u|^{\lambda+\alpha}}+\int_{B_{\mathbb{H}}(v,\eta)\cap\{|u^{-1}v|\leq |u|\}}\frac{1}{|u^{-1}v|^{\lambda+\alpha}}\\
&\leq \int_{B_{\mathbb{H}}(0,\eta)}\frac{1}{|u|^{\lambda+\alpha}}du+\int_{B_{\mathbb{H}}(v,\eta)}\frac{1}{|u^{-1}v|^{\lambda+\alpha}}du.
\end{split}\end{equation}

That is,
\begin{equation}\label{bu2}
\|I_{\lambda}(g_{i}\chi_{B_{\mathbb{H}}(0,R)})|u|^{-\alpha}-I_{\lambda}^{\eta}(g_{i}\chi_{B_{\mathbb{H}}(0,R)})|u|^{-\alpha}\|_{L^{1}(\mathbb{H}^n)}\leq O(\eta).
\end{equation}
Similarly, we can derive the analogous statement for $f$,
\begin{equation}\label{bu3}
\|I_{\lambda}(g\chi_{B_{R}(0)})|u|^{-\alpha}-I_{\lambda}^{\eta}(g\chi_{B_{R}(0)})|u|^{-\alpha}\|_{L^{1}(\mathbb{H}^n)}\leq O(\eta).
\end{equation}
Also notice that
\begin{equation}\label{bu4}
\|I_{\lambda}(g_{i})|u|^{-\alpha}-I_{\lambda}(g_{i}\chi_{B_{\mathbb{H}}(0,R)})|u|^{-\alpha}\|_{L^{q}(\mathbb{H}^n)}\leq C_{n,\alpha,\beta,p,q'}\|g\chi_{B_{\mathbb{H}}^{c}(0,R)}\|_{L^{p}(\mathbb{H}^n)}\leq \epsilon(R)
\end{equation}
and
\begin{equation}\label{bu5}
\|I_{\lambda}(g_{i})|u|^{-\alpha}-I_{\lambda}(g_{i}\chi_{B_{\mathbb{H}}(0,R)})|u|^{-\alpha}\|_{L^{q}(\mathbb{H}^n)}\leq C_{n,\alpha,\beta,p,q'}\|g\chi_{B^{c}_{\mathbb{H}}(0,R)}\|_{L^{p}(\mathbb{H}^n)}\leq \epsilon(R)
\end{equation}
Combining \eqref{bu1}-\eqref{bu5}
\begin{equation}\begin{split}
&|\{|I_{\lambda}(g_{i})(u)|u|^{-\alpha}-I_{\lambda}(g)(u)|u|^{-\alpha}|\geq5k\}\cap B_{M(0)}|\\
\leq&|\{|I_{\lambda}(g_{i})(u)|u|^{-\alpha}-I_{\lambda}(g_{i}\chi_{B_{\mathbb{H}}(0,R)})(u)|u|^{-\alpha}|\geq k\}|+\\
&|\{|I_{\lambda}(g_{i}\chi_{B_{\mathbb{H}}(0,R)})(u)|u|^{-\alpha}-I^{\eta}_{\lambda}(g_{i}\chi_{B_{\mathbb{H}}(0,R)})(u)|u|^{-\alpha}|\geq k\}|+\\
&|\{|I^{\eta}_{\lambda}(g_{i}\chi_{B_{\mathbb{H}}(0,R)})(u)|u|^{-\alpha}-I^{\eta}_{\lambda}(g\chi_{B_{\mathbb{H}}(0,R)})(u)|u|^{-\alpha}|\geq k\}\cap B_{\mathbb{H}}(0,M)|+\\
&|\{|I^{\eta}_{\lambda}(g\chi_{B_{\mathbb{H}}(0,R)})(u)|u|^{-\alpha}-I_{\lambda}(g\chi_{B_{\mathbb{H}}(0,R)})(u)|u|^{-\alpha}\}|+\\
&|\{I_{\lambda}(g\chi_{B_{\mathbb{H}}(0,R)})(u)|u|^{-\alpha}-I_{\lambda}(g)(u)|u|^{-\alpha}|\geq k\}|\\
\leq&2\big(\frac{\epsilon(R)}{k}\big)^{q}+2\frac{O(\eta)}{k}+o(1).
\end{split}\end{equation}
\end{proof}

\begin{lemma}\label{lem51}
There exist $I\subset \mathbb{N}$ at most countable and a family $\{u_i\}_{i\in I}$ in $\mathbb{H}^n$ such that
$$\nu=|I_{\lambda}(g)|^q|u|^{-\alpha q}du+\sum_{i\in I}\nu^i \delta_{u_i}$$
and $$\mu \geq |g|^p|v|^{\beta p}dv+\sum_{i\in I}\mu^i \delta_{u_i}$$
where $\nu^i=\nu(u_i)$, $\mu^i=\mu(u_i)$ for all $i\in I$.
Furthermore, we also have $\nu^i\leq C_{Q,\alpha,\beta,p,q'}^q(\mu^i)^{\frac{q}{p}}$.
\end{lemma}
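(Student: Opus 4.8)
The plan is to adapt Lions' second concentration–compactness principle to the nonlocal weighted operator $I_{\lambda}$; the one genuinely new point is that the Rellich–Kondrachov compactness used in the local (Sobolev) case must be replaced by the compactness of a \emph{commutator} of $I_{\lambda}$ with multiplication by a cut–off function. First I set up the defect measures. Since $\{g_i|v|^{\beta}\}$ is bounded in $L^p(\mathbb H^n)$ and $1<p<\infty$, after passing to a subsequence I may assume $g_i|v|^{\beta}\rightharpoonup g|v|^{\beta}$ in $L^p(\mathbb H^n)$; this is the function $g$ in the statement and also the hypothesis of Lemma \ref{lem4}. Put $w_i:=g_i-g$, so that $w_i|v|^{\beta}\rightharpoonup 0$ in $L^p$; by \eqref{HSW} the measures $|w_i|^p|v|^{\beta p}\,dv$ and $|I_{\lambda}(w_i)|^q|u|^{-\alpha q}\,du$ have bounded total mass, so along a further subsequence $|w_i|^p|v|^{\beta p}\,dv\rightharpoonup\bar\mu$ and $|I_{\lambda}(w_i)|^q|u|^{-\alpha q}\,du\rightharpoonup\bar\nu$ for some finite nonnegative measures $\bar\mu,\bar\nu$. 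By Lemma \ref{lem4}, $I_{\lambda}(g_i)|u|^{-\alpha}\to I_{\lambda}(g)|u|^{-\alpha}$ a.e., hence by linearity $I_{\lambda}(w_i)|u|^{-\alpha}\to 0$ a.e.; applying the Brezis–Lieb lemma \cite{BL} to the $L^q$–bounded sequence $I_{\lambda}(g_i)|u|^{-\alpha}$ gives
$$|I_{\lambda}(g_i)|^q|u|^{-\alpha q}=|I_{\lambda}(g)|^q|u|^{-\alpha q}+|I_{\lambda}(w_i)|^q|u|^{-\alpha q}+o(1)\ \text{in }L^1,$$
so $\nu=|I_{\lambda}(g)|^q|u|^{-\alpha q}\,du+\bar\nu$. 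For $\mu$ I only need an inequality: weak lower semicontinuity of the $L^p$ norm (after multiplying by $\varphi^{1/p}\in C_c$) gives $\mu\ge|g|^p|v|^{\beta p}\,dv$, while a triangle–inequality comparison of $\int_{B_{\mathbb H}(u_0,r)}|g_i|^p|v|^{\beta p}$ with $\int_{B_{\mathbb H}(u_0,r)}|w_i|^p|v|^{\beta p}$, letting $r\to0$, gives $\bar\mu(\{u_0\})\le\mu(\{u_0\})$ at every point; since $|g|^p|v|^{\beta p}\,dv$ has no atoms, separating absolutely continuous and atomic parts yields $\mu\ge|g|^p|v|^{\beta p}\,dv+\sum_i\mu(\{u_i\})\,\delta_{u_i}$ for any countable family $\{u_i\}$.

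The core of the proof is the reverse Hölder inequality $(\int_{\mathbb H^n}\varphi^q\,d\bar\nu)^{1/q}\le C_{Q,\alpha,\beta,p,q'}(\int_{\mathbb H^n}\varphi^p\,d\bar\mu)^{1/p}$ for all $0\le\varphi\in C_c^{1}(\mathbb H^n)$. I write $\varphi\,I_{\lambda}(w_i)=I_{\lambda}(\varphi w_i)+R_i$ with $R_i(u)=\int_{\mathbb H^n}(\varphi(u)-\varphi(v))|u^{-1}v|^{-\lambda}w_i(v)\,dv$. The first term is controlled directly by \eqref{HSW}:
$$\|I_{\lambda}(\varphi w_i)|u|^{-\alpha}\|_q\le C\,\|\varphi w_i|v|^{\beta}\|_p=C\Big(\int\varphi^p\,|w_i|^p|v|^{\beta p}\,dv\Big)^{1/p}\longrightarrow C\Big(\int\varphi^p\,d\bar\mu\Big)^{1/p}.$$
Since $|I_{\lambda}(w_i)|^q|u|^{-\alpha q}\,du\rightharpoonup\bar\nu$ and $\varphi^q\in C_c$, one has $\int\varphi^q\,d\bar\nu=\lim_i\|\varphi\,I_{\lambda}(w_i)|u|^{-\alpha}\|_q^q$, so the inequality follows once I prove $\|R_i|u|^{-\alpha}\|_q\to0$.

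The key observation for $\|R_i|u|^{-\alpha}\|_q\to0$ is that the commutator kernel $(\varphi(u)-\varphi(v))|u^{-1}v|^{-\lambda}$ vanishes unless $u$ or $v$ lies in $\mathrm{supp}\,\varphi$, and, because the Carnot–Carathéodory distance is comparable to the homogeneous norm, $|\varphi(u)-\varphi(v)|\le C|u^{-1}v|$, so the kernel is dominated by $C|u^{-1}v|^{1-\lambda}$ near the diagonal and by $C|u^{-1}v|^{-\lambda}$ (with the support restriction) away from it. Hence, after inserting the weights, the operator $h\mapsto R[h]:=\big(\int(\varphi(\cdot)-\varphi(v))|{\cdot}^{-1}v|^{-\lambda}|v|^{-\beta}h(v)\,dv\big)|\cdot|^{-\alpha}$ is \emph{compact} from $L^p(\mathbb H^n)$ to $L^q(\mathbb H^n)$, and since $w_i|v|^{\beta}\rightharpoonup0$ this forces $R_i|u|^{-\alpha}=R[w_i|v|^{\beta}]\to0$ strongly in $L^q$. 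To see the compactness one approximates the kernel by truncation in three regimes, each error going to zero in operator norm: near the diagonal the effective exponent $\lambda-1$ is strictly subcritical for the given $p,q$ because $\lambda-1<\lambda+\alpha+\beta$ (this uses $\alpha+\beta\ge0$), so truncating $|u^{-1}v|<\varepsilon$ costs $O(\varepsilon^{Q+1-\lambda})$; the weight singularities at the origin are removable since $\alpha<Q/q$ and $\beta<Q/p'$; and at infinity the kernel decays fast enough because $(\lambda+\alpha)q>Q$ and $(\lambda+\beta)p'>Q$, both forced by $\tfrac1p+\tfrac1{q'}+\tfrac{\lambda+\alpha+\beta}{Q}=2$ together with $\alpha<Q/q$ and $\beta<Q/p'$; the truncated kernels are Hilbert–Schmidt, hence compact. \textbf{This compactness step is the main obstacle}: one must estimate $R_i$ at the level of the linear operator, not its kernel majorant, since replacing $w_i$ by $|w_i|$ would destroy weak nullity, and one must keep track of exactly which of the hypotheses $\alpha+\beta\ge0$, $\alpha<Q/q$, $\beta<Q/p'$, $0<\lambda<Q$ controls which truncation error.

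Finally, I invoke the classical measure lemma of Lions attached to the reverse Hölder inequality with $q>p$: approximating $\chi_A$ by admissible $\varphi$ gives $\bar\nu(A)\le C^q\bar\mu(A)^{q/p}$ for every Borel set $A$, and partitioning a set on which $\bar\mu$ is nonatomic into $N$ pieces of equal $\bar\mu$–mass and letting $N\to\infty$ forces $\bar\nu$ to vanish there; hence $\bar\nu$ is carried by the (at most countably many) atoms $\{u_i\}_{i\in I}$ of $\bar\mu$, with $\bar\nu=\sum_{i\in I}\bar\nu_i\delta_{u_i}$ and $\bar\nu_i\le C^q\bar\mu(\{u_i\})^{q/p}$. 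Writing $\nu^i:=\nu(\{u_i\})$ and $\mu^i:=\mu(\{u_i\})$, the splitting $\nu=|I_{\lambda}(g)|^q|u|^{-\alpha q}\,du+\bar\nu$ gives $\nu^i=\bar\nu_i$, and the inequality $\bar\mu(\{u_i\})\le\mu^i$ from the first paragraph gives $\nu^i\le C_{Q,\alpha,\beta,p,q'}^q(\mu^i)^{q/p}$; together with the already established $\nu=|I_{\lambda}(g)|^q|u|^{-\alpha q}\,du+\sum_{i\in I}\nu^i\delta_{u_i}$ and $\mu\ge|g|^p|v|^{\beta p}\,dv+\sum_{i\in I}\mu^i\delta_{u_i}$ this completes the proof.
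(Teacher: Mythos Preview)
Your argument is correct and is precisely the second Lions concentration--compactness lemma adapted to the weighted nonlocal operator; the paper itself gives no proof beyond remarking that the crucial Lemma~1.2 of \cite{Lions1} carries over to $\mathbb{H}^n$, so you have supplied exactly the details the authors omit. One technical point needs tightening: your near--diagonal estimate ``truncating $|u^{-1}v|<\varepsilon$ costs $O(\varepsilon^{Q+1-\lambda})$'' is the $L^1$ norm of $|w|^{1-\lambda}\chi_{\{|w|<\varepsilon\}}$, which via Schur controls only the $L^p\to L^p$ norm, not the $L^p\to L^q$ norm required here. The one--line fix is to write
\[
|\varphi(u)-\varphi(v)|\,|u^{-1}v|^{-\lambda}\chi_{\{|u^{-1}v|<\varepsilon\}}\le C\varepsilon\,|u^{-1}v|^{-\lambda}
\]
and invoke \eqref{HSW} once more, giving operator norm $O(\varepsilon)$; with this correction (and reading ``Hilbert--Schmidt'' as ``bounded kernel on a bounded product set, hence compact $L^p\to L^q$'') the commutator compactness, the reverse H\"older inequality $\bar\nu(A)\le C^q\bar\mu(A)^{q/p}$, and the resulting atomic decomposition all go through as you describe.
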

The original version of the above Lemma is on $\mathbb{R}^n$, but it can also be done on $\mathbb{H}^n$ because
the crucial ingredient as Lemma 1.2 in \cite{Lions1} still holds in $\mathbb{H}^n$. Thus we omit the detailed proof.
\medskip

\emph{The Proof of Theorem \ref{thm1}:}
 Assumptions of Theorem \ref{thm1} imply that one of $\alpha$ and $\beta$ is nonnegative. Without loss of generality, we may assume that
$\alpha>0$. Once we obtain $\|g(v)|v|^{\beta}\|_p=1$, in view of the $|u|$ weighted Stein-Weiss inequalities and $g_i(v)|v|^{\beta}\rightharpoonup g(v)|v|^{\beta}$ weakly in $L^p(\mathbb{H}^n)$, then $g_i(v)|v|^{\beta}\rightarrow g(v)|v|^{\beta}$ strongly in $L^p(\mathbb{H}^n)$ and $\|I_{\lambda}(g)|u|^{-\alpha}\|_q=C_{Q,\alpha,\beta,p,q'}$. Hence $g$ is actually a extremal function for the $|u|$ weighted Stein-Weiss inequalities on Heisenberg group. Now, we start to prove that $\|g(v)|v|^{\beta}\|_p=1$. We argue this by contradiction.
By Lemma \ref{lem3}, we have $\mu(\mathbb{H}^n)=1$. In view of \eqref{tight}, as what we did in Lemma \ref{lem3}, we can deduce that  $\nu(\mathbb{H}^n)=C^q_{Q,\alpha,\beta,p,q'}$.
In light of Lemma \ref{lem51}, we derive that
\begin{equation}\begin{split}
\nu(\mathbb{H}^n)&=\|I_{\lambda}(g)|u|^{-\alpha}\|_q^q+\sum_{i\in I}\nu^i\\
&\leq C^q_{Q,\alpha,\beta,p,q'}\|g(v)|v|^{\beta}\|_p^q+\sum_{i\in I} C^q_{Q,\alpha,\beta,p,q'}(\mu^i)^{\frac{q}{p}}\\
&\leq C^q_{Q,\alpha,\beta,p,q'}k^{\frac{q}{p}}+ C^q_{Q,\alpha,\beta,p,q'}(\sum_{i\in I}\mu^i)^{\frac{q}{p}}\\
&\leq C^q_{Q,\alpha,\beta,p,q'}k^{\frac{q}{p}}+ C^q_{Q,\alpha,\beta,p,q'}(1-k)^{\frac{q}{p}}\\
&\leq C^q_{Q,\alpha,\beta,p,q'}.
\end{split}\end{equation}
Hence the above inequality must be equality. Since $q>p$,  if
$\|g(v)|v|^{\beta}\|_p^p=k<1,$ we must have $\mu=\delta_{u_0}$, $\nu=C^q_{Q,\alpha,\beta,p,q'}\delta_{u_0}$ and $g=0$.
Next, we claim this is impossible to happen. We discuss this by distinguishing two case.
\medskip

\emph{Case 1.}
If $u_0=0$, then $\int_{B_{\mathbb{H}}(0,1)}d\mu=1$, which arrives at a contradiction with the initial hypotheses
$$\int_{B_\mathbb{H}(0,1)}|g_i(v)|^p|v|^{\beta p}dv=\frac{1}{2}.$$

\emph{Case 2.} if $u_0\neq 0$, we will show that
 there exists some $\delta>0$ such that
$$0 \notin B_\mathbb{H}(u_0,\delta)\ \  {\rm and}\ \
\lim_{i\rightarrow \infty}\int_{B_\mathbb{\mathbb{H}}(u_0,\delta)}|I_{\lambda}(g_i)|^q|u|^{-\alpha q}du=0,$$
which arrive at a contradiction with
$\int_{B_{\mathbb{H}}(u_0,\delta)}d\nu=C^q_{Q,\alpha,\beta,p,q'}$.
In fact, since $$I_\lambda (g_i)(u)\rightarrow I_\lambda(g)(u)=0\ \ \forall\ u\in \mathbb{H}^n,$$
then it follows from \eqref{HSW} that
\begin{equation}\label{x1}\begin{split}
\int_{B_\mathbb{H}(u_0,\delta)}|I_{\lambda}(g_i)|^t|u|^{-\alpha t}du &\lesssim \int_{B_\mathbb{H}(u_0,\delta)}|I_{\lambda}(g_i)|^tdu\\
&\lesssim \int_{\mathbb{H}^n}|I_{\lambda}(g_i)|^tdu\\
&\lesssim \big(\int_{\mathbb{H}^n}|g_i|^p|v|^{\beta p}dv\big)^{\frac{t}{p}},
\end{split}\end{equation}
where $\frac{1}{t}=\frac{1}{p}+\frac{\lambda+\beta}{Q}-1$.
This together with the fact $\alpha>0$, $\frac{1}{q}=\frac{1}{p}+\frac{\lambda+\alpha+\beta}{Q}-1$ leads to $t>q$. Combining
the inequality \eqref{x1} and $I_\lambda (g_i)(u)\rightarrow 0$ a.e. $u\in \mathbb{H}^n$, we derive that
$$\lim_{i\rightarrow \infty}\int_{B_\mathbb{H}(u_0,\delta)}|I_{\lambda}(g_i)|^q|u|^{-\alpha q}du=0.$$
This accomplishes the proof of Theorem \ref{thm1}.

\section{The Proof of Theorem \ref{thm3}}
In this section, we will give the existence results about the Stein-Weiss inequalities with $|z|$ weights on the Heisenberg group. Namely, we will give the proof of
Theorem \ref{thm3}. Since the idea of proof is similar to that of Theorem \ref{thm1} in principle, we will give an approximately outline without detailed explanation.
\medskip

It is not hard to check that extremals of the inequality \eqref{HSW1} are those solving maximizing problem
\begin{equation}\label{mini1}
C_{Q,\alpha,\beta,p,q'}:= \sup \{\|I_{\lambda}(g)|z|^{-\alpha}\|_{L^q(\mathbb{H}^n)} : g\geq 0 , \|g|z'|^{\beta}\|_{L^p(\mathbb{H}^{n})}=1\}.
\end{equation}
Since $\sup \{\|I_{\lambda}(g)|z|^{-\alpha}\|_{L^q(\mathbb{H}^n)} : g\geq 0 , \|g|z'|^{\beta}\|_{L^p(\mathbb{H}^{n})}=1\}$ is dilation-invariant
and translation-invariant about $v_0=(0,t_0)$, hence we may suppose that $\{g_i\}_i$
is also a maximizing sequence for problem \eqref{mini1}
with
\begin{equation}\label{dt}
\int_{B_\mathbb{H}(0,1)}|g_i|^p|z'|^{\beta p}dv=\sup_{t\in \mathbb{R}}\int_{B_\mathbb{H}((0,t),1)}|g_i|^p|z'|^{\beta p}dv=\frac{1}{2}
\end{equation}
for all $i\in \mathbb{N}$. We define the measure:
$$\mu_i:=|g_i|^p|z'|^{\beta p}dv \ \ and\ \ \nu_i:=|I_{\lambda}(g_i)|^q|z|^{-\alpha q}du.$$
Then there exist two bounded measures $\mu$ and $\nu$ such that
$$\mu_i\rightharpoonup \mu \ \ and\ \ \ \nu_i\rightharpoonup \nu \ \ weakly \ in\  the\ sense \ of\  measures\  as\  i\rightarrow\infty.$$
Easily, it follows from lower semi-continuity of the measure that
$$\int_{\mathbb{H}^n}d\mu\leq \lim_{i\rightarrow +\infty}\int_{\mathbb{H}^n}d\mu_i=1\ \ {\rm and}\ \ \int_{\mathbb{H}^n}d\nu\leq \lim_{i\rightarrow +\infty}\int_{\mathbb{H}^n}d\nu_i=C^q_{Q,\alpha,\beta,p,q'}.$$
We now apply Lions' first concentration compactness to the sequence of measure $\{\mu_i\}$.
\medskip
\begin{lemma}
There exists a subsequence of  $\{\mu_i\}$ such that one of the following condition holds:
(a)(Compactness) There exists $\{v_i\}$ in $\mathbb{H}^n$ such that for each $\epsilon>0$ small enough, we can find $R_{\epsilon}>0$
with $$\int_{B_\mathbb{H}(v_i,R_\varepsilon)}d\mu_i\geq1-\varepsilon \ \ {\rm for}\ {\rm all}\ i.$$
(b)(Vanishing) For all $R>0$, there holds:
$$\lim_{i\rightarrow\infty}(\sup_{v\in \mathbb{H}^n}\int_{B_{\mathbb{H}}(v,R)}d\mu_i)=0 .$$
(c)(Dichotomy) There exist $k \in(0,1)$ such that for any $\varepsilon >0$, there exists $R_\varepsilon>0$ and a sequence $\{v_i^\varepsilon\}_{i\in \mathbb{N}}$, with the following property: given $R'>R_\varepsilon$, there are nonnegative measures $\mu_i^1$ and $\mu_i^2$ such that
$$0\leq \mu_i^1+\mu_i^2\leq \mu_i,\ \ Supp(\mu_i^1)\subset  B_{\mathbb{H}}(v_i^\varepsilon,R_\varepsilon),
\ \  Supp(\mu_i^2)\subset \mathbb{H}^n\setminus B_{\mathbb{H}}(v_i^\varepsilon,R'),$$
$$ \mu_i^1=\mu_i|_{B_{\mathbb{H}}(v_i^\varepsilon,R_\varepsilon)},
\ \ \mu_i^2= \mu_i|_{\mathbb{H}^n\setminus B_{\mathbb{H}}(v_i^\varepsilon,R')},$$
$$\limsup_{i\rightarrow\infty}\Big(|k- \int_{\mathbb{H}^n}d\mu_i^1|+|(1-k)-\int_{\mathbb{H}^n}d\mu_i^2|\Big)\leq \varepsilon.$$
\end{lemma}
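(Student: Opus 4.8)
The plan is to run Lions' classical first concentration--compactness argument (\cite{Lions1,Lions2}), after noting that its only geometric inputs are available on $\mathbb{H}^n$: the left-invariant (quasi-)metric $d(u,v)=|u^{-1}v|$ associated to the Kor\'anyi gauge $|u|=(|z|^4+t^2)^{1/4}$ satisfies a triangle-type inequality, $(\mathbb{H}^n,d)$ is complete, separable and $\sigma$-compact, and hence every finite Borel measure on it is tight and Radon. Since $\mu_i(\mathbb{H}^n)=1$ for every $i$, I would first introduce the concentration functions
$$\mathcal Q_i(R):=\sup_{v\in\mathbb{H}^n}\mu_i\bigl(B_{\mathbb{H}}(v,R)\bigr),\qquad R\ge0,$$
each nondecreasing in $R$ with values in $[0,1]$ and with $\mathcal Q_i(R)\to1$ as $R\to\infty$. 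A diagonal extraction over $R\in\mathbb{Q}_{\ge0}$ together with Helly's selection theorem then yields a subsequence (not relabelled) along which $\mathcal Q_i\to\mathcal Q$ at every continuity point of a nondecreasing limit $\mathcal Q\colon[0,\infty)\to[0,1]$, and one sets $k:=\lim_{R\to\infty}\mathcal Q(R)=\sup_{R\ge0}\mathcal Q(R)\in[0,1]$. The trichotomy is read off from the value of $k$.

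If $k=0$, then $\mathcal Q(R)=0$ for all $R$, which is precisely alternative (b). If $k=1$, fix $\varepsilon>0$, pick a continuity point $R_\varepsilon$ of $\mathcal Q$ with $\mathcal Q(R_\varepsilon)>1-\varepsilon$, so that for all large $i$ there is $v_i$ with $\mu_i\bigl(B_{\mathbb{H}}(v_i,R_\varepsilon)\bigr)>1-\varepsilon$. To obtain one center sequence $\{v_i\}$ valid for every $\varepsilon$, I would run this along a sequence $\varepsilon_\ell\downarrow0$: since two balls each carrying more than half of $\mu_i$ must intersect, the (quasi-)triangle inequality for $d$ keeps the successive centers within a controlled distance of the first one, so fixing $v_i$ to be that first center and enlarging the radii accordingly works for all $i$ past a threshold, while the finitely many small indices are absorbed by enlarging $R_\varepsilon$ further using tightness of the individual $\mu_i$. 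This is alternative (a).

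The dichotomy case $0<k<1$ is the one requiring a little mass bookkeeping. For fixed $\varepsilon>0$, choose a continuity point $R_\varepsilon$ with $k-\varepsilon<\mathcal Q(R_\varepsilon)\le k$ and a center $v_i^\varepsilon$ with $\mu_i\bigl(B_{\mathbb{H}}(v_i^\varepsilon,R_\varepsilon)\bigr)\ge\mathcal Q_i(R_\varepsilon)-\varepsilon$; for any prescribed $R'>R_\varepsilon$ set $\mu_i^1:=\mu_i|_{B_{\mathbb{H}}(v_i^\varepsilon,R_\varepsilon)}$ and $\mu_i^2:=\mu_i|_{\mathbb{H}^n\setminus B_{\mathbb{H}}(v_i^\varepsilon,R')}$. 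The ordering $0\le\mu_i^1+\mu_i^2\le\mu_i$ and the support conditions are immediate (as $R'>R_\varepsilon$ the two sets are disjoint), while the mass estimates come from
$$\mathcal Q_i(R_\varepsilon)-\varepsilon\ \le\ \mu_i\bigl(B_{\mathbb{H}}(v_i^\varepsilon,R_\varepsilon)\bigr)\ \le\ \mu_i\bigl(B_{\mathbb{H}}(v_i^\varepsilon,R')\bigr)\ \le\ \mathcal Q_i(R'),$$
combined with $\mathcal Q_i(R_\varepsilon)\to\mathcal Q(R_\varepsilon)\in(k-\varepsilon,k]$ and $\mathcal Q_i(R')\to\mathcal Q(R')\le k$: both $\int d\mu_i^1=\mu_i\bigl(B_{\mathbb{H}}(v_i^\varepsilon,R_\varepsilon)\bigr)$ and $\mu_i\bigl(B_{\mathbb{H}}(v_i^\varepsilon,R')\bigr)$ are then within $2\varepsilon$ of $k$ in the limit, so $\int d\mu_i^1\to k$ and $\int d\mu_i^2=1-\mu_i\bigl(B_{\mathbb{H}}(v_i^\varepsilon,R')\bigr)\to1-k$ up to $O(\varepsilon)$; relabelling $\varepsilon$ gives (c).

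The only genuinely ``Heisenberg'' points — and they are not obstacles — are that all the balls are metric balls for the Kor\'anyi gauge, so that the ``two large balls intersect'' step really yields a bound on the centers, and that $\mathbb{H}^n$ is $\sigma$-compact, so that each $\mu_i$ is tight. No hard analysis enters this lemma itself. The substantive difficulties highlighted in the paper — the loss of translation invariance of the doubly weighted functional and the absence of a compact embedding — only appear afterwards, when one must \emph{rule out} vanishing and dichotomy for the particular maximizing sequence normalized as in \eqref{dt}; that step is carried out in the lemmas that follow, in close analogy with Lemmas \ref{lem3}--\ref{lem51} of Section 2, the translation-in-$t$ normalization in \eqref{dt} playing the role that the dilation normalization played there.
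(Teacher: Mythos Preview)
Your proposal is correct and follows exactly Lions' original argument via concentration functions, Helly's selection theorem, and the trichotomy according to the value of $k=\lim_{R\to\infty}\mathcal Q(R)$. The paper itself gives no proof of this lemma at all: it simply invokes Lions' first concentration--compactness principle \cite{Lions1,Lions2} and remarks (after Lemma~\ref{lem51}) that the argument carries over to $\mathbb{H}^n$ because the key ingredient, Lions' Lemma~1.2, remains valid there; so you are supplying precisely the details the paper defers to the reference, and your observation that only the metric/separability structure is needed is exactly the point.
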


\begin{lemma}\label{lem31}
Suppose that $\{g_i\}_i$ is a maximizing sequence for problem \eqref{mini1}, then compactness (a) holds. In particular, we also have that $\int_{\mathbb{H}^n}d\mu=1$.
\end{lemma}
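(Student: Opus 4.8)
The plan is to mimic, almost verbatim, the argument used for Lemma~\ref{lem3} in the proof of Theorem~\ref{thm1}, since the only structural change is that the full homogeneous weights $|u|,|v|$ are replaced by the horizontal weights $|z|,|z'|$, and that the maximizing sequence has additionally been normalized using the translation invariance in the central variable $t$ (see \eqref{dt}). First I would rule out vanishing (b): this is immediate because $\int_{B_{\mathbb H}(0,1)}|g_i|^p|z'|^{\beta p}\,dv=\frac12$ for every $i$, so the supremum over balls of radius $1$ of $\mu_i$ is bounded below by $\frac12$, contradicting (b).

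Next I would rule out dichotomy (c). Arguing by contradiction, assume (c) holds, so for every $\varepsilon>0$ there are $R_0>0$ and centers $\{v_i\}$ with
\begin{equation*}
\|g_i|z'|^{\beta}\chi_{B_{\mathbb H}(v_i,R)}\|_p^p=k+O(\varepsilon),\qquad \|g_i|z'|^{\beta}\chi_{B_{\mathbb H}^c(v_i,R)}\|_p^p=1-k+O(\varepsilon)
\end{equation*}
for all $R\geq R_0$. As in Lemma~\ref{lem3}, the key point is that
$I_\lambda(g_i\chi_{B_{\mathbb H}(v_i,R)})\to I_\lambda(g_i)$ pointwise as $R\to\infty$; here the Hölder estimate uses $\big(\int_{B_{\mathbb H}^c(v_i,R)}|u^{-1}v|^{-\lambda p'}|z'|^{-\beta p'}dv\big)^{1/p'}<\infty$, which holds under the hypotheses of Theorem~E (in particular $\beta<2n/p'$ guarantees local integrability of $|z'|^{-\beta p'}$ near the subspace $z'=0$, and the decay $\lambda p'>Q$ follows from the scaling relation). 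Then Brezis--Lieb splits $\|I_\lambda(g_i)|z|^{-\alpha}\|_q^q$ into the contributions from $B_{\mathbb H}(v_i,R)$ and its complement, and applying the horizontal-weight Stein--Weiss inequality \eqref{HSW1} to each piece together with $q>p$ and the elementary inequality $(k+O(\varepsilon))^{q/p}+(1-k+O(\varepsilon))^{q/p}\leq k^{q/p}+(1-k)^{q/p}+O(\varepsilon)<1$ (for fixed $k\in(0,1)$) yields $\|I_\lambda(g_i)|z|^{-\alpha}\|_q^q<C_{Q,\alpha,\beta,p,q'}^q$ in the limit, contradicting that $\{g_i\}$ is maximizing. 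Hence compactness (a) must hold.

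Finally, to show $\int_{\mathbb H^n}d\mu=1$ it suffices to prove $\int d\mu\geq1$, and for this I would show the compactness centers $\{v_i\}$ can be taken bounded. This is where the normalization \eqref{dt} is used in place of the plain normalization of Lemma~\ref{lem3}: if $v_i=(z_i,t_i)$ were unbounded, pass to a subsequence; writing $\tilde v_i=(0,t_i)$, the condition \eqref{dt} says $\int_{B_{\mathbb H}((0,t_i),1)}|g_i|^p|z'|^{\beta p}dv=\frac12$ as well (by the translation-invariance of the problem in $t$, the sup is attained at every center of the form $(0,t)$ after the appropriate translation — more carefully, one uses that the mass near $(0,t_i)$ is $\tfrac12$), while compactness forces almost all the mass to concentrate in $B_{\mathbb H}(v_i,R_\varepsilon)$; since either $|z_i|\to\infty$ or $|t_i|\to\infty$ pushes $B_{\mathbb H}(v_i,R_\varepsilon)$ away from the region where \eqref{dt} locates mass $\tfrac12$, one gets $\int_{B_{\mathbb H}((0,t_i),1)}d\mu_i\leq\varepsilon$ for large $i$, a contradiction. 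Once $\{v_i\}$ is bounded, choosing $R>R_0$ and $\phi\in C_c^\infty(\mathbb H^n)$ with $\phi\equiv1$ on $B_{\mathbb H}(0,R)$ and passing to the limit in $\int\phi\,d\mu_i$ gives $\int d\mu\geq1-\varepsilon$ for every $\varepsilon$, hence $\int d\mu=1$.

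The main obstacle I anticipate is the boundedness-of-centers step: unlike the full-weight case where the weight $|v|^{\beta p}$ together with the fixed normalization on $B_{\mathbb H}(0,1)$ pins the sequence down directly, here the horizontal weight $|z'|^{\beta p}$ is invariant under translations in $t$, so one must carefully exploit the refined normalization \eqref{dt} (the sup over $t\in\mathbb R$ being attained at $t=0$) to prevent the mass from escaping to infinity along the center of the group; getting the geometry of the balls $B_{\mathbb H}(v_i,R_\varepsilon)$ versus $B_{\mathbb H}((0,t_i),1)$ right in the quasi-metric $|u^{-1}v|$ is the delicate part. The remaining lemmas (the analogue of Lemma~\ref{lem4} on a.e. convergence of $I_\lambda(g_i)|z|^{-\alpha}$, the analogue of Lemma~\ref{lem51} giving the atomic decomposition of $\nu$, and the final contradiction argument splitting into the cases $u_0$ on or off the singular set $\{z=0\}$) then go through exactly as in Section~2 with $|u|,|v|$ replaced by $|z|,|z'|$.
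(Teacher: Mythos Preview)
Your outline is essentially the paper's own argument, and it is correct in structure. Two small corrections are worth making.

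First, the decay condition you quote for the tail estimate is not quite right: the scaling relation does not give $\lambda p'>Q$ in general (this would require $\alpha+\beta<Q/q$, which is not implied by the hypotheses of Theorem~E). What one actually needs---and what the paper uses---is $(\lambda+\beta)p'>Q$ together with $\beta p'<2n$; the first makes the radial integral $\int_R^\infty r^{Q-1-(\lambda+\beta)p'}\,dr$ converge, the second makes the angular integral $\int_\Sigma |z^*|^{-\beta p'}\,d\mu(\xi^*)$ finite. Both follow from the hypotheses: from $\tfrac{1}{q}=\tfrac{1}{p}+\tfrac{\lambda+\alpha+\beta}{Q}-1$ and $\alpha<\tfrac{2n}{q}$ one gets $(\lambda+\beta)p'>Q$.

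Second, you are overthinking the boundedness-of-centers step. The refined sup condition in \eqref{dt} is \emph{not} needed here; it is used only later, in Case~1 of the final contradiction argument for Theorem~\ref{thm3}, to rule out concentration at points $u_0=(0,t_0)$ on the center of the group. For the present lemma the plain normalization $\int_{B_{\mathbb H}(0,1)}|g_i|^p|z'|^{\beta p}\,dv=\tfrac12$ already pins down the centers exactly as in Lemma~\ref{lem3}: if $|v_i|\to\infty$ then eventually $B_{\mathbb H}(0,1)\subset B_{\mathbb H}^c(v_i,R_\varepsilon)$, forcing $\tfrac12\leq\varepsilon$. Your attempt to invoke the sup over $t$ (claiming mass $\tfrac12$ near each $(0,t_i)$) is in fact incorrect---\eqref{dt} only says the sup is \emph{attained} at $t=0$, not that it equals $\tfrac12$ for every $t$---but fortunately this machinery is unnecessary at this stage.
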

\begin{proof}
Clearly, (b) can not occur because $\int_{B_\mathbb{H}(0,1)}|g_i|^p|z'|^{\beta p}dv=\frac{1}{2}$. Next, we only need to show that (c) also does not happen.
We argue this by contradiction. Suppose that (c) occurs, then for any $\varepsilon>0$, we can find $R_{0}>0$ and $\{v_i\}\subset \mathbb{H}^n$ such that for any given $R\geq R_0$, there holds
\begin{equation}\label{d1}
\|g_i|z'|^{\beta}\chi_{B_\mathbb{H}(v_i,R)}\|_p^p=k+ O(\varepsilon)\ \ {\rm and}\ \ \|g_i|z'|^{\beta}\chi_{B_\mathbb{H}^c(v_i,R)}\|_p^p=1-k+ O(\varepsilon).
\end{equation}
Direct computations yield that
\begin{equation}\begin{split}
&|I_{\lambda}(g_i)(u)-I_{\lambda}(g_i\chi_{B_\mathbb{H}(v_i,R)})(u)|\\
&\ \ =|\int_{\mathbb{H}^n}\frac{(g_i\chi_{B_\mathbb{H}^c(v_i,R)})(v)}{|u^{-1}v|^{\lambda}}dv|\\
&\ \ \leq \big(\int_{B_\mathbb{H}^c(v_i,R)}|g_i(v)|z'|^{\beta}|^pdv\big)^{\frac{1}{p}}\big(\int_{B_\mathbb{H}^c(v_i,R)}|u^{-1}v|^{-\lambda p'}|z|^{-\beta p'}dv\big)^{\frac{1}{p'}},\\
\end{split}\end{equation}
here $v=(z,t)$. under the assumptions of Theorem \ref{thm3}, we obtain $(\lambda+\beta)p'>Q$ and $\beta p'<2n$. On the other hand��
$$ \big(\int_{B_\mathbb{H}^c(v_i,R)}|u^{-1}v|^{-\lambda p'}|z|^{-\beta p'}dv\big)^{\frac{1}{p'}}\lesssim
\int_{R}^{\infty}r^{Q-1-(\lambda+\beta)p'}dr\int_{\Sigma}|z^{*}|^{-\beta p'}d\mu(\xi^{*}),$$
where $\Sigma=\{u\in \mathbb{H}^n:\ |u|=1\}$ and $\xi^{*}=(z^*, t^*)\in \Sigma$.
Hence, $$\lim_{R\rightarrow \infty}I_{\lambda}(g_i\chi_{B_\mathbb{H}(v_i,R)})(u)=I_{\lambda}(g_i)(u)\ \ {\rm for}\ u\in \mathbb{H}^n.$$

Now we can apply the Brezis-Lieb lemma to obtain
$$\|I_{\lambda}(g_i)|z|^{-\alpha}\|_{q}^q=\|I_{\lambda}(g_i\chi_{B_\mathbb{H}(v_i,R)})|z|^{-\alpha}\|_{q}^q+
\|I_{\lambda}(g_i\chi_{B_\mathbb{H}^c(v_i,R)})|z|^{-\alpha}\|_{q}^q+o(1).$$
Combining the $|z|$ weighted Stein-Weiss inequality \eqref{HSW1} and condition \eqref{d1}, we conclude that
\begin{equation}\begin{split}
&\|I_{\lambda}(g_i\chi_{B_\mathbb{H}(v_i,R)})|z|^{-\alpha}\|_{q}^q+
\|I_{\lambda}(g_i\chi_{B_\mathbb{H}^c(v_i,R)})|z|^{-\alpha}\|_{q}^q+o(1)\\
&\ \ \leq C_{Q,\alpha,\beta, p,q'}^q\|g_i\chi_{B_\mathbb{H}(v_i,R)}|z'|^{\beta}\|_p^q+C_{Q,\alpha,\beta, p,q'}^q\|g_i\chi_{B_\mathbb{H}^c(v_i,R)}|z'|^{\beta}\|_p^q+o(1)\\
&\ \ \leq C_{Q,\alpha,\beta, p,q'}^q(k^{\frac{q}{p}}+(1-k)^{\frac{q}{p}})+O(\varepsilon)+o(1)\\
&\ \ < C_{Q,\alpha,\beta, p,q'}^q,
\end{split}\end{equation}
which arrives at a contradiction with $\lim\limits_{i\rightarrow \infty}\|I_{\lambda}(g_i)|z|^{-\alpha}\|_{q}^q=C_{Q,\alpha,\beta, p,q'}^q$.
\medskip

Now, we claim that $\int_{\mathbb{H}^n}d\mu=1$.
It follows from $\int_{B_\mathbb{H}(0,1)}|g_i|^p|z'|^{\beta p}dv=\frac{1}{2}$ that
$\{v_i\}$ must be a bounded sequence in $\mathbb{H}^n$.  Hence, we may assume for any $\varepsilon>0$, there exist $R_0$ such that for any $R\geq R_0$, there holds $$\int_{B_\mathbb{H}(0,R)}d\mu_i\geq1-\varepsilon.$$
Then choose $\phi \in C^{\infty}_c(\mathbb{H}^n)$ with $\phi|_{B_\mathbb{H}(0,R)}=1$, then by the weak convergence in the sense of measure, we derive that
\begin{equation}\begin{split}
\int_{\mathbb{H}^n}d\mu&\geq \int_{\mathbb{H}^n}\phi(u)d\mu\\
&\geq \lim_{i\rightarrow \infty}\int_{B_\mathbb{H}(0,R)}|g_i|^p|z'|^{\beta p}dv\\
&\geq \lim_{i\rightarrow \infty}\int_{B_\mathbb{H}(0,R_0)}d\mu_i
\geq 1-\varepsilon.\\
\end{split}\end{equation}
Since $\varepsilon$ is arbitrarily small, we derive that $\int_{\mathbb{H}^n}d\mu \geq 1$. This accomplishes the proof of Lemma \ref{lem31}.
\end{proof}

The following lemma states that
$$I_{\lambda}(g_{i})(u)|z|^{-\alpha}\rightarrow I_{\lambda}(g)(u)|z|^{-\alpha}\ \  a.e.$$
Since the proof is similar to that of Lemma \ref{lem4}, we omit the detailed proof.
\begin{lemma}\label{lem41}
Let $\{g_{i}\}_i$ be a maximizing sequence satisfying
$$\|g_{i}(v)|z'|^{\beta}\|_{L^{p}(\mathbb{H}^n)}=1\ \ {\rm and}\ \ \int_{B_{\mathbb{H}}(0,R)}|g_{i}(v)|^{p}|z'|^{\beta p}dv\geq1-\varepsilon(R).$$
we may assume that $g_{i}(v)|z'|^{\beta}\rightarrow g(v)|z'|^{\beta}$ weakly in $L^{p}(\mathbb{H}^n)$ (by passing to a subsequence if necessary). Then, by passing to a subsequence again  if necessary,
$$I_{\lambda}(g_{i})(u)|z|^{-\alpha}\rightarrow I_{\lambda}(g)(u)|z|^{-\alpha}\ \  a.e..$$
\end{lemma}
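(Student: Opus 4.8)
The statement to prove is Lemma \ref{lem41}, which asserts that for a maximizing sequence $\{g_i\}$ for problem \eqref{mini1} satisfying the tightness condition $\int_{B_{\mathbb{H}}(0,R)}|g_i|^p|z'|^{\beta p}\,dv \geq 1-\varepsilon(R)$ and $g_i(v)|z'|^{\beta} \rightharpoonup g(v)|z'|^{\beta}$ weakly in $L^p(\mathbb{H}^n)$, one has $I_{\lambda}(g_i)(u)|z|^{-\alpha} \to I_{\lambda}(g)(u)|z|^{-\alpha}$ almost everywhere, after passing to a further subsequence.

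The plan is to mirror the proof of Lemma \ref{lem4}, replacing the full weight $|u|$ by the horizontal weight $|z|$ throughout, and checking that every estimate survives under the hypotheses of Theorem \ref{thm3} (in particular $\beta < \frac{2n}{p'}$, $\alpha < \frac{2n}{q}$, and $\lambda + \alpha + \beta \leq Q$). The strategy is to establish convergence in measure — which yields a pointwise convergent subsequence — by splitting $\mathbb{H}^n$ into a large ball $B_{\mathbb{H}}(0,M)$ and its complement, and on the ball further splitting the operator $I_{\lambda}$ into the truncated piece $I_{\lambda}^{\eta}$ (integration over $|u^{-1}v|\geq \eta$) and the short-range remainder.

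First I would establish tightness at infinity: for $M > R$, using the $|z|$-weighted Stein-Weiss inequality \eqref{HSW1} on the tail $g_i\chi_{B_{\mathbb{H}}^c(0,R)}$ gives a contribution $\lesssim \epsilon(R)$, while for the main term $g_i\chi_{B_{\mathbb{H}}(0,R)}$ one uses Minkowski's inequality together with $\|g_i\chi_{B_{\mathbb{H}}(0,R)}\|_{L^1}\leq \|g_i|z'|^{\beta}\|_{L^p}\||z'|^{-\beta}\chi_{B_{\mathbb{H}}(0,R)}\|_{L^{p'}}<\infty$ (finite because $\beta p' < 2n \leq Q$) and the decay $\big(\int_{|u|\geq M}(|u|-R)^{-\lambda q}|z|^{-\alpha q}\,du\big)^{1/q}\to 0$ as $M\to\infty$; here one needs $\alpha q < 2n$ for local integrability of $|z|^{-\alpha q}$ near the center-line and $(\lambda+\alpha)q > Q$ (equivalently $\frac1q < \frac1p + \frac{\lambda+\alpha+\beta}{Q} - 1 + \frac{\beta}{Q}$, which follows from the scaling relation and $\beta < \frac{2n}{p'}\le Q/p'$) for decay at infinity. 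This gives $\|I_{\lambda}(g_i)|z|^{-\alpha}\chi_{B^c_{\mathbb{H}}(0,M)}\|_{L^q}\leq \epsilon(M)$, and the same bound for the limit $g$ by weak lower semicontinuity of the $L^p$ norm on tails. Then, just as in Lemma \ref{lem4}, I split the set $\{|I_{\lambda}(g_i)|z|^{-\alpha} - I_{\lambda}(g)|z|^{-\alpha}|\geq 15k\}$ into the two tail pieces (controlled by $(\epsilon(M)/k)^q$ via Chebyshev) and the local piece on $B_{\mathbb{H}}(0,M)$. For the local piece, I truncate: $I_{\lambda}^{\eta}(g_i\chi_{B_{\mathbb{H}}(0,R)})(u)|z|^{-\alpha}\to I_{\lambda}^{\eta}(g\chi_{B_{\mathbb{H}}(0,R)})(u)|z|^{-\alpha}$ pointwise because $|u^{-1}v|^{-\lambda}\chi_{B_{\mathbb{H}}(0,R)}\chi_{B^c_{\mathbb{H}}(u,\eta)}\in L^{p'}(\mathbb{H}^n)$ for fixed $u$ and $\eta$ (it is bounded and compactly supported), hence it converges locally in measure; and the short-range error $\|I_{\lambda}(g_i\chi_{B_{\mathbb{H}}(0,R)})|z|^{-\alpha} - I_{\lambda}^{\eta}(g_i\chi_{B_{\mathbb{H}}(0,R)})|z|^{-\alpha}\|_{L^1}\leq O(\eta^{Q-\lambda-\alpha})$ is handled exactly as the displayed computation in Lemma \ref{lem4}, bounding $\int_{B_{\mathbb{H}}(v,\eta)}|u^{-1}v|^{-\lambda}|z|^{-\alpha}\,du$ by $\int_{B_{\mathbb{H}}(0,\eta)}|u|^{-\lambda-\alpha}\,du + \int_{B_{\mathbb{H}}(v,\eta)}|u^{-1}v|^{-\lambda-\alpha}\,du \lesssim \eta^{Q-\lambda-\alpha}$ using $\lambda + \alpha + \beta \leq Q$ so in particular $\lambda+\alpha < Q$; here one slightly improves over Lemma \ref{lem4} by noting $|z|\le |u|$ pointwise, so the horizontal weight only helps. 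Combining all pieces and sending first $i\to\infty$, then $\eta\to 0$, $M\to\infty$, $R\to\infty$ in the right order gives that the measure of the bad set tends to $0$, i.e. convergence in measure, and a diagonal subsequence converges a.e.

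The main obstacle — and the only place where the horizontal weight genuinely differs from the full weight — is verifying the exponent inequalities $\alpha q < 2n$, $\beta p' < 2n$, $(\lambda+\alpha)q > Q$ and $\lambda+\alpha < Q$ hold under the hypotheses of Theorem \ref{thm3} rather than Theorem D, since the admissible range for $\alpha,\beta$ is now governed by $2n$ rather than $Q = 2n+2$; one must check that the local integrability of $|z|^{-\alpha q}$ along the $t$-axis is exactly guaranteed by $\alpha q < 2n$ (the codimension of $\{z=0\}$ in $\mathbb{H}^n$ with respect to the homogeneous structure) and similarly for $|z'|^{-\beta p'}$. Once these are in hand, every remaining estimate is formally identical to the proof of Lemma \ref{lem4} with $|u|\mapsto |z|$, $|v|\mapsto |z'|$, so I would simply indicate the modifications and refer to that proof, as the paper does in the sentence preceding the statement.
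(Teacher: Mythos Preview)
Your overall strategy is precisely what the paper intends: the paper says the proof of Lemma~\ref{lem41} is ``similar to that of Lemma~\ref{lem4}'' and omits it, so running the Lemma~\ref{lem4} argument with $|u|,|v|$ replaced by $|z|,|z'|$ and checking the exponent conditions under the hypotheses of Theorem~E is exactly right. Your verification of the tail estimates and of the conditions $\beta p'<2n$, $\alpha q<2n$, $(\lambda+\alpha)q>Q$ is correct.

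There is, however, a slip in the short-range estimate. You assert that since $|z|\le|u|$ ``the horizontal weight only helps,'' and then write
\[
\int_{B_{\mathbb H}(v,\eta)}|u^{-1}v|^{-\lambda}|z|^{-\alpha}\,du
\ \le\ \int_{B_{\mathbb H}(0,\eta)}|u|^{-\lambda-\alpha}\,du+\int_{B_{\mathbb H}(v,\eta)}|u^{-1}v|^{-\lambda-\alpha}\,du.
\]
For $\alpha>0$ (the case one reduces to in the proof of Theorem~\ref{thm3}) the inequality $|z|\le|u|$ gives $|z|^{-\alpha}\ge|u|^{-\alpha}$, so the horizontal weight is \emph{worse}, and the split $\{|u^{-1}v|\gtrless|u|\}$ from Lemma~\ref{lem4} does not yield the displayed bound. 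The estimate $\sup_v\int_{B_{\mathbb H}(v,\eta)}|u^{-1}v|^{-\lambda}|z|^{-\alpha}\,du\lesssim\eta^{Q-\lambda-\alpha}$ is still true, but needs a different justification: split instead on $\{|u^{-1}v|\gtrless|z|\}$; the piece $\{|u^{-1}v|\le|z|\}$ gives $\int_{B(v,\eta)}|u^{-1}v|^{-\lambda-\alpha}\,du$ immediately, while on $\{|z|<|u^{-1}v|<\eta\}$ one rescales by $\delta_{1/\eta}$ and uses that (i) for fixed $z$ the $t$-variable in $B_{\mathbb H}(v,\eta)$ ranges over an interval of length $O(\eta^2)$ and (ii) $\alpha<2n$, to obtain the bound uniformly in $v$. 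Alternatively, observe that a.e.\ convergence of $I_\lambda(g_i)|z|^{-\alpha}$ is equivalent to a.e.\ convergence of $I_\lambda(g_i)$, so in this step one may drop the weight entirely and use the unweighted short-range bound $\sup_v\int_{B(v,\eta)}|u^{-1}v|^{-\lambda}\,du\lesssim\eta^{Q-\lambda}$, which is immediate.
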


\begin{lemma}\label{lem5}
There exist $I\subset \mathbb{N}$ at most countable and a family $\{u_i\}_{i\in I}$ in $\mathbb{H}^n$ such that
$$\nu=|I_{\lambda}(g)(u)|^q|z|^{-\alpha q}du+\sum_{i\in I}\nu^i \delta_{u_i}$$
and $$\mu \geq |g|^p|z'|^{\beta p}dv+\sum_{i\in I}\mu^i \delta_{u_i}$$
where $\nu^i=\nu(u_i)$, $\mu^i=\mu(u_i)$ for all $i\in I$.
Furthermore, we also have $\nu^i\leq C_{Q,\alpha,\beta,p,q'}^q(\mu^i)^{\frac{q}{p}}$.
\end{lemma}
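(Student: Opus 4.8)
The plan is to follow the proof of Lemma \ref{lem51}, i.e. the second concentration--compactness lemma of Lions, replacing the weights $|u|^{-\alpha}$, $|v|^{\beta}$ by $|z|^{-\alpha}$, $|z'|^{\beta}$; the only genuinely new point is keeping track of the integrability of the weights near the characteristic line $\{z=0\}$ and at infinity, which is exactly what the hypotheses $\beta<\frac{2n}{p'}$, $\alpha<\frac{2n}{q}$ of Theorem \ref{thm3} (together with $(\lambda+\beta)p'>Q$, already noted in Lemma \ref{lem31}) supply. First I would pass to a subsequence so that, besides $\mu_i\rightharpoonup\mu$ and $\nu_i\rightharpoonup\nu$, one has $g_i|z'|^{\beta}\rightharpoonup g|z'|^{\beta}$ weakly in $L^p(\mathbb H^n)$ and $|g_i-g|^p|z'|^{\beta p}\,dv\rightharpoonup\widetilde\mu$ for some nonnegative bounded measure $\widetilde\mu$. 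Weak lower semicontinuity of the $L^p$-norm gives $\mu\ge|g|^p|z'|^{\beta p}\,dv$. Setting $h_i:=g_i-g$ and invoking the a.e. convergence $I_{\lambda}(g_i)(u)|z|^{-\alpha}\to I_{\lambda}(g)(u)|z|^{-\alpha}$ from Lemma \ref{lem41} together with the Brezis--Lieb lemma on each ball (of finite $|z|^{-\alpha q}\,du$-measure, since $\alpha q<2n$), one sees that
\[
\widetilde\nu\ :=\ \nu-|I_{\lambda}(g)(u)|^q|z|^{-\alpha q}\,du
\]
is a nonnegative measure, namely the weak-$*$ limit of $|I_{\lambda}(h_i)|^q|z|^{-\alpha q}\,du$.

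The crux is the local Stein--Weiss estimate
\[
\widetilde\nu\big(\overline{B_{\mathbb H}(u_0,\rho)}\big)\ \le\ C_{Q,\alpha,\beta,p,q'}^{\,q}\,\widetilde\mu\big(\overline{B_{\mathbb H}(u_0,4\rho)}\big)^{q/p},\qquad u_0\in\mathbb H^n,\ \rho>0.
\]
To prove it I would split $h_i=h_i\chi_{B_{\mathbb H}(u_0,4\rho)}+h_i\chi_{B^c_{\mathbb H}(u_0,4\rho)}$. For the near part, \eqref{HSW1} gives directly
\[
\big\|I_{\lambda}(h_i\chi_{B_{\mathbb H}(u_0,4\rho)})|z|^{-\alpha}\big\|_{q}\ \le\ C_{Q,\alpha,\beta,p,q'}\Big(\int_{B_{\mathbb H}(u_0,4\rho)}|h_i|^p|z'|^{\beta p}\,dv\Big)^{1/p},
\]
whose $\limsup$ in $i$ is at most $C_{Q,\alpha,\beta,p,q'}\,\widetilde\mu(\overline{B_{\mathbb H}(u_0,4\rho)})^{1/p}$. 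For the far part, when $u\in B_{\mathbb H}(u_0,2\rho)$ and $v\notin B_{\mathbb H}(u_0,4\rho)$ the kernel $|u^{-1}v|^{-\lambda}$ is nonsingular, and the conditions $\beta p'<2n$ and $(\lambda+\beta)p'>Q$ ensure that $v\mapsto|u^{-1}v|^{-\lambda}|z'|^{-\beta}\chi_{B^c_{\mathbb H}(u_0,4\rho)}(v)$ lies in $L^{p'}(\mathbb H^n)$ with norm bounded uniformly over the compact set $\overline{B_{\mathbb H}(u_0,2\rho)}$; since $h_i|z'|^{\beta}\rightharpoonup 0$ weakly in $L^p$, this forces $I_{\lambda}(h_i\chi_{B^c_{\mathbb H}(u_0,4\rho)})(u)\to 0$ for every such $u$, and dominating by a fixed multiple of $|z|^{-\alpha}\in L^q(\overline{B_{\mathbb H}(u_0,2\rho)})$ (using $\alpha q<2n$) and applying dominated convergence yields $\|I_{\lambda}(h_i\chi_{B^c_{\mathbb H}(u_0,4\rho)})|z|^{-\alpha}\chi_{B_{\mathbb H}(u_0,2\rho)}\|_q\to 0$. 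Combining the two parts and letting $i\to\infty$ gives the displayed estimate.

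Granting the local estimate, the conclusion follows from the abstract measure-theoretic lemma of Lions (Lemma 1.2 in \cite{Lions1}), which --- as already noted after Lemma \ref{lem51} --- applies on $\mathbb H^n$ because $(\mathbb H^n,d,du)$ is a space of homogeneous type and so carries the covering lemmas used there: since $q>p$, the superadditivity of $t\mapsto t^{q/p}$ forces $\widetilde\nu$ to be purely atomic, $\widetilde\nu=\sum_{i\in I}\nu^i\delta_{u_i}$ with $I$ at most countable and $\{u_i\}\subset\mathbb H^n$, its atoms being among those of $\widetilde\mu$, and $\nu^i\le C_{Q,\alpha,\beta,p,q'}^{q}\,\widetilde\mu(\{u_i\})^{q/p}$. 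A Brezis--Lieb comparison on shrinking balls, using that $g\in L^p(|z'|^{\beta p}\,dv)$ and that $|z'|^{\beta p}\,dv$ has no atoms (valid since $\beta p>-2n$, which holds because $\alpha+\beta\ge0$ forces $\beta\ge-\alpha>-\tfrac{2n}{q}\ge-\tfrac{2n}{p}$ when $q>p$), then identifies $\widetilde\mu(\{u_i\})=\mu(\{u_i\})=:\mu^i$; combined with $\mu\ge|g|^p|z'|^{\beta p}\,dv$ and the fact that $|g|^p|z'|^{\beta p}\,dv$ is atomless, this gives $\mu\ge|g|^p|z'|^{\beta p}\,dv+\sum_{i\in I}\mu^i\delta_{u_i}$, while $\nu=|I_{\lambda}(g)(u)|^q|z|^{-\alpha q}\,du+\sum_{i\in I}\nu^i\delta_{u_i}$ with $\nu^i\le C_{Q,\alpha,\beta,p,q'}^{q}(\mu^i)^{q/p}$, as desired. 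The main obstacle is the local Stein--Weiss estimate above, specifically controlling the far part of $I_{\lambda}(h_i)$ uniformly up to $\{z=0\}$ and out to infinity in the presence of two singular weights and without translation invariance; but this is precisely where the restrictions $\alpha<\frac{2n}{q}$, $\beta<\frac{2n}{p'}$ and $\lambda+\alpha+\beta\le Q$ of Theorem \ref{thm3} enter.
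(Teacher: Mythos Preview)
Your proposal is correct and follows exactly the route the paper has in mind: the paper gives no proof of this lemma at all, simply stating it as the $|z|$-weighted analogue of Lemma~\ref{lem51}, which in turn is justified only by the remark that Lions' Lemma~1.2 in \cite{Lions1} carries over to $\mathbb{H}^n$. Your write-up is in fact considerably more detailed than anything in the paper --- in particular, you make explicit where the hypotheses $\alpha<\tfrac{2n}{q}$, $\beta<\tfrac{2n}{p'}$, and $(\lambda+\beta)p'>Q$ are used to control the weights near $\{z=0\}$ and at infinity, and you spell out the near/far splitting that underlies the local reverse-H\"older estimate; the paper leaves all of this implicit.
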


\emph{The Proof of Theorem \ref{thm3}:}
Without loss of generality, we may assume that
$\alpha>0$. Once we obtain $\|g|z'|^{\beta}\|_p=1$, according to \eqref{HSW1} and $g_i|z'|^{\beta}\rightharpoonup g|z'|^{\beta}$ weakly in $L^p(\mathbb{H}^n)$, we derive that $g_i|z'|^{\beta}\rightarrow g|z'|^{\beta}$ strong in $L^p(\mathbb{H}^n)$ and $\|I_{\lambda}(g)|z|^{-\alpha}\|_q=C_{Q,\alpha,\beta,p,q'}$, which implies that  $g$ is a extremal function for the $|z|$ weighted Stein-Weiss inequality on Heisenberg group. Hence, we only need to prove that $\|g|z'|^{\beta}\|_p=1$. We argue this by contradiction. If
$$\|g|z'|^{\beta}\|_p^p=k<1,$$ then it follows from $\mu(\mathbb{H}^n)=1$ and $\nu(\mathbb{H}^n)=C^q_{Q,\alpha,\beta,p,q'}$ and Lemma \ref{lem5} that
\begin{equation}\begin{split}
\nu(\mathbb{H}^n)&=\|I_{\lambda}(g)|z|^{-\alpha}\|_q^q+\sum_{i\in I}\nu^i\\
&\leq C^q_{Q,\alpha,\beta,p,q'}\|g|z'|^{\beta}\|_p^q+\sum_{i\in I} C^q_{Q,\alpha,\beta,p,q'}(\mu^i)^{\frac{q}{p}}\\
&\leq C^q_{Q,\alpha,\beta,p,q'}k^{\frac{q}{p}}+ C^q_{Q,\alpha,\beta,p,q'}(\sum_{i\in I}\mu^i)^{\frac{q}{p}}\\
&\leq C^q_{Q,\alpha,\beta,p,q'}.
\end{split}\end{equation}
Hence the above inequality must be equality. Since $q>p$, we must have $\mu=\delta_{u_0}$, $\nu=C^q_{Q,\alpha,\beta,p,q'}\delta_{u_0}$ and $g=0$.
Next, we claim this is impossible to happen. We discuss this by distinguishing two case.
\medskip

\emph{Case 1.}
If $u_0=(z_0, t_0)$ with $z_0=0$, then $\int_{B_{\mathbb{H}}((0,t_0),1)}d\mu=1$, which arrives at a contradiction with the initial hypotheses
$$\sup_{t\in \mathbb{R}}\int_{B_\mathbb{H}((0,t),1)}|g_i|^p|z'|^{\beta p}du=\frac{1}{2}.$$

\emph{Case 2.} if $u_0=(z_0, t_0)$ with $z_0\neq0$, we will show that
 there exists some $\delta>0$ such that
$$(0,t)\notin B_\mathbb{H}(u_0,\delta)\ \  {\rm and}\ \
\lim_{i\rightarrow \infty}\int_{B_\mathbb{\mathbb{H}}(u_0,\delta)}|I_{\lambda}(g_i)|^q|z|^{-\alpha q}du=0,$$
which arrive at a contradiction with
$\int_{B_{\mathbb{H}}(u_0,\delta)}d\nu=C^q_{Q,\alpha,\beta,p,q'}$.
In fact, since $$I_\lambda (g_i)(u)\rightarrow I_\lambda(g)(u)=0,\ \ \forall\ u\in \mathbb{H}^n,$$
In view of the $|z|$ weighted Stein-Weiss inequality on Heisenberg group, one can derive that
\begin{equation}\begin{split}
\int_{B_\mathbb{H}(u_0,\delta)}|I_{\lambda}(g_i)|^t|z|^{-\alpha t}du &\lesssim \int_{B_\mathbb{H}(u_0,\delta)}|I_{\lambda}(g_i)|^tdu\\
&\lesssim \int_{\mathbb{H}^n}|I_{\lambda}(g_i)|^tdu\\
&\lesssim \big(\int_{\mathbb{H}^n}|g_i|^p|z'|^{\beta p}dv\big)^{\frac{t}{p}},
\end{split}\end{equation}
where $\frac{1}{t}=\frac{1}{p}+\frac{\lambda+\beta}{Q}-1$.
This together with the fact $\alpha>0$, $\frac{1}{q}=\frac{1}{p}+\frac{\lambda+\alpha+\beta}{Q}-1$ and $I_\lambda (g_i)(u)\rightarrow 0$ a.e. $u\in \mathbb{H}^n$ yields
$$\lim_{i\rightarrow \infty}\int_{B_\mathbb{H}(u_0,\delta)}|I_{\lambda}(g_i)|^q|z|^{-\alpha q}du=0.$$
Then we accomplish the proof of Theorem \ref{thm3}.

\section{The Proof of Theorem \ref{thm5}}
In this section, we will use the Sawyer and Wheeden condition on weighted inequalities for integral operators in \cite{SW} and concentration compactness principle to establish inequality \eqref{HSWF} and existence of their extremal functions.
\medskip
Observe that inequality \eqref{HSWF} is equivalent to the boundedness of the following weighted fractional integral operator:
$$\|T(g)|x'|^{-\alpha}\|_{L^q(\mathbb{R}^{m+n})}\leq C\|g|y'|^{\beta}\|_{L^p(\mathbb{R}^{m+n})},$$
where $T(g)$ is defined as $$T(g)=\int_{\mathbb{R}^{n+m}}|x-y|^{-\lambda}g(y)dy.$$ In order to prove that operator $T$ is bounded from $L^{p}(|y'|^{\beta p}dy, \mathbb{R}^{m+n})$ to $L^{q}(|x'|^{-\alpha q}dx, \mathbb{R}^{m+n})$, according to Sawyer and Wheeden's result \cite{SW}, we only need to verify that the following two condition hold.
\begin{lemma}\label{Saw}
The operator $T$ is bounded from $L^{p}(|y'|^{\beta p}dy, \mathbb{R}^{m+n})$ to $L^{q}(|x'|^{-\alpha q}dx, \mathbb{R}^{m+n})$ if and only if the following two conditions hold.

(1) There exists $\varepsilon>0$ such that for any pair of balls $B$ and $B'$ with radius $r$ and $r'$ satisfying $B'\subset 4B$,
$$\big(\frac{r'}{r}\big)^{Q-\varepsilon}\big(\frac{\phi(B')}{\phi(B)}\big)\leq C_\varepsilon.$$

(2) There exists $t>1$ such that for any ball $B\subset \mathbb{R}^{n+m}$,
$$\phi(B)|B|^{\frac{1}{q}+\frac{1}{p'}}\big(\frac{1}{|B|}\int_{B}|x'|^{-\alpha q t}dx\big)^{\frac{1}{qt}}\big(\frac{1}{|B|}\int_{B}|y'|^{-\beta p't }dy\big)^{\frac{1}{p't}}\leq C_t.$$
\end{lemma}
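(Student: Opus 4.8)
\emph{Proof strategy for Lemma~\ref{Saw}.} The plan is to deduce the lemma directly from the two-weight characterization of fractional integral operators of Sawyer and Wheeden \cite{SW}, with no new analytic ingredient. First I would fix the setting: $(\mathbb{R}^{n+m},|\cdot|,dx)$ is a space of homogeneous type, and for $0<\lambda<n+m$ the kernel $K(x,y)=|x-y|^{-\lambda}$ is the model fractional (Riesz-type) kernel to which \cite{SW} applies -- it is nonnegative, radially decreasing in $|x-y|$, and satisfies $\int_{B}K(x_B,y)\,dy\asymp r_B^{\,n+m-\lambda}$ for every ball $B=B(x_B,r_B)$ precisely because $\lambda<n+m$. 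Hence the Sawyer--Wheeden functional attached to this kernel is $\phi(B)\asymp r_B^{\,n+m-\lambda}\asymp|B|^{1-\lambda/(n+m)}$, and the operator $T$ in the statement is exactly the fractional integral $T_\lambda$ acting between $L^p(v\,dy)$ and $L^q(w\,dx)$ with weights $v(y)=|y'|^{\beta p}$ and $w(x)=|x'|^{-\alpha q}$.

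Next I would verify that we are in the admissible exponent range: the identity $\tfrac1p+\tfrac1{q'}+\tfrac{\lambda+\alpha+\beta}{n+m}=2$ rewrites as $\tfrac1p-\tfrac1q=1-\tfrac{\lambda+\alpha+\beta}{n+m}$, which together with $\lambda+\alpha+\beta\le n+m$ forces $1<p\le q<\infty$. Thus \cite{SW} applies, and its theorem asserts that $T_\lambda\colon L^p(v\,dy)\to L^q(w\,dx)$ is bounded if and only if two conditions hold: a $\phi$-regularity (balance) condition comparing $\phi$ on nested balls, and a localized testing condition over single balls. Substituting $\phi(B)\asymp r_B^{\,n+m-\lambda}$ and the explicit $w,v$ into those two conditions, and writing the homogeneous dimension as $Q=n+m$, reproduces verbatim conditions (1) and (2) of the lemma.

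Finally, the forward implication is the elementary one, obtained by testing the claimed inequality on indicator-type functions supported in a ball $B$ (for which $T_\lambda$ produces a quantity $\asymp\phi(B)$ on $B$), which forces (1) and (2); the converse -- that (1) and (2) imply boundedness -- is the substantive content of \cite{SW}, which I would simply invoke. The only real obstacle is bookkeeping: \cite{SW} is phrased for cubes and for an abstract functional $\phi$ obeying a list of structural axioms, so one must check that $\phi(B)=r_B^{\,n+m-\lambda}$ meets them (nonnegativity, the doubling growth $\phi(2B)\lesssim\phi(B)$, and $\int_BK(x_B,y)\,dy\asymp\phi(B)$, all immediate from $0<\lambda<n+m$) and pass freely between cubes and balls, which is harmless since the two are comparable and $K$ is radially controlled.
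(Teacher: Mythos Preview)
Your overall strategy matches the paper exactly: Lemma~\ref{Saw} is not proved independently in the paper at all, it is simply quoted as a specialization of the Sawyer--Wheeden two-weight theorem in \cite{SW}, with the paper only supplying the identification of $\phi$ and then checking that condition (2) is verifiable for the specific weights $|x'|^{-\alpha q}$, $|y'|^{\beta p}$. So deducing the lemma by direct citation of \cite{SW} is precisely what is intended.

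There is, however, a concrete error in your bookkeeping. You identify the Sawyer--Wheeden functional as
\[
\phi(B)\asymp \int_B K(x_B,y)\,dy \asymp r_B^{\,n+m-\lambda},
\]
but this is not the functional used in \cite{SW} or in the paper. Immediately after the statement of the lemma the paper specifies $\phi(B)=2^{24\lambda}r^{-\lambda}$, i.e.\ the \emph{value} of the kernel $|x-y|^{-\lambda}$ at scale $r$, not its integral over $B$. With your choice, condition~(1) would not read as stated (the exponent in the ratio $\phi(B')/\phi(B)$ would be $n+m-\lambda$ rather than $-\lambda$), and your own structural check ``$\phi(2B)\lesssim\phi(B)$'' fails outright, since $r\mapsto r^{n+m-\lambda}$ is increasing for $\lambda<n+m$. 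Once you replace your $\phi$ by $\phi(B)\asymp r^{-\lambda}$, the substitution into the abstract \cite{SW} conditions genuinely does reproduce (1) and (2) verbatim, and the rest of your outline (exponent range check, testing on indicators for necessity, invoking \cite{SW} for sufficiency, cube/ball equivalence) goes through.
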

Here $\phi(B)=2^{24\lambda}r^{-\lambda}$ and $B, B' \in \mathbb{R}^{m+n}$.
According to the assumptions of Theorem \ref{thm5}, we see that there exists $t>1$ such that $\alpha qt<m$, $\beta p't<m$.
Then it follows that integrals $\int_{B}|x'|^{-\alpha q t}dx$ and $\int_{B}|x'|^{-\beta p't }dx$ are finite. Since the ideas of existence of extremals for the inequality \eqref{HSWF} is similar to the proof of Theorem \ref{thm3} in principle, we omit the detailed proof.
Then we accomplish the proof of Theorem \ref{thm5}.

\section{The Proof of Theorem \ref{thm6}}
In this section, we will use the method of moving plane in integral forms introduced by Chen, Li and Ou \cite{CLO} to prove that
symmetry results for each pair of solutions $(u,v)$ of integral system \eqref{system3}.
In order to prove our theorem, we first introduce some notations. For $\lambda \in \mathbb{R}$, denote
$$H_{\nu}=\{x\in \mathbb{R}^{m+n}:x_{1}>\nu\}.$$
For $x=(x_{1},\hat{x}) \in \mathbb{R}^{m+n}$, we denote $x_{\lambda}=(2\nu-x_{1},\hat{x})$. Write $u_{\lambda}(x)=u(x^{\nu})$ and $v_{\nu}(x)=v(x^{\nu})$. Assume that $(u,v)\in L^{p_1+1}(\mathbb{R}^{m+n})\times L^{p_2+1}(\mathbb{R}^{n+m})$ is a pair of positive solutions of the following integral system
\begin{equation}\label{weighted}\begin{cases}
u(x)=\int_{\mathbb{R}^{n+m}}|x'|^{-\alpha}|x-y|^{-\lambda} v^{p_2}(y)|y'|^{-\beta} dy,\\
v(x)=\int_{\mathbb{R}^{n+m}}|x'|^{-\beta}|x-y|^{-\lambda} u^{p_1}(y)|y|^{-\alpha} dy,
\end{cases}\end{equation}
where $\frac{1}{p_1-1}+\frac{1}{p_2-1}=\frac{\alpha+\beta+\lambda}{n+m}.$  We only prove that $u(x)|_{\mathbb{R}^m\times \{0\}}$ and $v(x)|_{\mathbb{R}^m\times \{0\}}$ are radially symmetric and monotonously decreasing about origin in $\mathbb{R}^m$. We need the following lemma whose proof can be seen in \cite{CL}.
\begin{lemma}\label{lemma}
If $(u,v)$ is a pair of nonnegative solutions of \eqref{weighted}, then for any $x\in \mathbb{R}^{m+n}$, there holds
\begin{equation}\nonumber\\\begin{split}
&u(x)-u(x^\nu)\\
&\ \ =\int_{H_\nu}\Big(\frac{1}{|(x^\nu)'|^{\alpha}|x^\nu-y|^\lambda}-\frac{1}{|x'|^\alpha|x-y|^{\lambda}}\Big)
\Big(\frac{1}{|(y^\nu)'|^\beta}-\frac{1}{|y'|^\beta}\Big)
v^{p_2}(y)dy\\
&\ \ \ \ +\int_{H_\nu}\Big(\frac{1}{|x'|^\alpha}-\frac{1}{|(x^\nu)'|^\alpha}\Big)\Big(\frac{1}{|x-y|^{\lambda}|(y^\nu)'|^\beta}+
\frac{1}{|x^\nu-y|^{\lambda}|(y^\nu)'|^\beta}\Big)v^{p_2}(y)dy\\
&\ \ \ \ +\int_{H_\nu}\Big(\frac{1}{|(x^\nu)'|^\alpha|x^\lambda-y|^{\lambda}|(y^\nu)'|^\beta}-\frac{1}
{|x'|^\alpha|x-y|^{\lambda}|(y^\nu)'|^\beta}\Big)\Big(v_{\nu}^{p_2}(y)-v^{p_2}(y)\Big)\\
\end{split}\end{equation}
and
\begin{equation}\nonumber\\\begin{split}
&v(x)-v(x^\nu)\\
&\ \ =\int_{H_\nu}\Big(\frac{1}{|(x^\nu)'|^\beta|x^\nu-y|^{\lambda}}-\frac{1}{|x|^\alpha|x-y|^{\lambda}}\Big)
\Big(\frac{1}{|(y^\nu)'|^\alpha}-\frac{1}{|y'|^\alpha}\Big)
u^{p_1}(y)dy\\
&\ \ \ \ +\int_{H_\nu}\Big(\frac{1}{|(x^\nu)'|^\alpha}-\frac{1}{|(x^\nu)'|^\beta}\Big)\Big(\frac{1}{|x-y|^{\lambda}|(y^\nu)'|^\alpha}+
\frac{1}{|x^\nu-y|^{\lambda}|(y^\nu)'|^\alpha}\Big)u^{p_1}(y)dy\\
&\ \ \ \ +\int_{H_\nu}\Big(\frac{1}{|(x^\nu)'|^\beta|x^\nu-y|^{\lambda}|(y^\nu)'|^\alpha}-\frac{1}
{|x'|^\beta|x-y|^{\lambda}|(y^\nu)'|^\alpha}\Big)\Big(u^{p_1}_{\nu}(y)-u^{p_1}(y)\Big)dy.
\end{split}\end{equation}
\end{lemma}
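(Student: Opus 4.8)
The plan is to derive the identity directly from the defining integral representation \eqref{weighted}, using only the geometry of the reflection $y\mapsto y^\nu$ together with an absolute-convergence argument that legitimizes rearranging the integrals. The two geometric facts I would record at the outset are that $R:y\mapsto y^\nu=(2\nu-y_1,\hat y)$ is a measure-preserving isometry of $\mathbb{R}^{m+n}$ which fixes the hyperplane $\partial H_\nu$ and interchanges $H_\nu$ with its complement, so that for all $x,y$
\begin{equation}\nonumber
|x^\nu-y^\nu|=|x-y|,\qquad |x-y^\nu|=|x^\nu-y|,
\end{equation}
and that, since the reflected coordinate $y_1$ is one of the first $m$ coordinates, the weights $|(y^\nu)'|$ and $|(x^\nu)'|$ genuinely differ from $|y'|$ and $|x'|$; it is precisely these weight factors, together with the two kernels $|x-y|^{-\lambda}$ and $|x^\nu-y|^{-\lambda}$, that must be recombined.

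The first step I would carry out is to check that every integral appearing in the computation converges absolutely. Since $(u,v)\in L^{p_1+1}(\mathbb{R}^{m+n})\times L^{p_2+1}(\mathbb{R}^{m+n})$ solves \eqref{weighted}, the weighted Hardy--Littlewood--Sobolev (Stein--Weiss) inequality \eqref{HSWF} of Theorem \ref{thm5} shows that $\int_{\mathbb{R}^{m+n}}|x'|^{-\alpha}|x-y|^{-\lambda}v^{p_2}(y)|y'|^{-\beta}\,dy$ is finite for a.e.\ $x$ (indeed it equals $u(x)$), and likewise with $x$ replaced by $x^\nu$; hence the splittings and changes of variables used below are legitimate applications of Fubini's theorem.

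Next I would split the integral for $u(x)$ over $H_\nu$ and $\mathbb{R}^{m+n}\setminus H_\nu$ and substitute $y\mapsto y^\nu$ in the second piece, using $|x-y^\nu|=|x^\nu-y|$; performing the same operation for $u(x^\nu)$, where instead $|x^\nu-y^\nu|=|x-y|$, and subtracting, I obtain $u(x)-u(x^\nu)$ as a sum of two integrals over $H_\nu$, one carrying $v^{p_2}(y)$ and one carrying $v^{p_2}(y^\nu)=v_\nu^{p_2}(y)$. Writing $v_\nu^{p_2}(y)=v^{p_2}(y)+\big(v_\nu^{p_2}(y)-v^{p_2}(y)\big)$ isolates the third integral in the statement, and it remains to reorganize the coefficient of $v^{p_2}(y)$. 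Here I would add and subtract the two ``mixed'' factors $|x'|^{-\alpha}|x-y|^{-\lambda}|(y^\nu)'|^{-\beta}$ and $|(x^\nu)'|^{-\alpha}|x^\nu-y|^{-\lambda}|(y^\nu)'|^{-\beta}$, which splits this coefficient exactly into a product of the two differences $\big(|(x^\nu)'|^{-\alpha}|x^\nu-y|^{-\lambda}-|x'|^{-\alpha}|x-y|^{-\lambda}\big)$ and $\big(|(y^\nu)'|^{-\beta}-|y'|^{-\beta}\big)$, plus the single weight difference $\big(|x'|^{-\alpha}-|(x^\nu)'|^{-\alpha}\big)$ multiplied by the symmetrized kernel $|x-y|^{-\lambda}|(y^\nu)'|^{-\beta}+|x^\nu-y|^{-\lambda}|(y^\nu)'|^{-\beta}$ --- these being exactly the first two integrals of the lemma. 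The companion identity for $v(x)-v(x^\nu)$ then follows by running the same computation on the second equation of \eqref{weighted} after interchanging the roles of $(\alpha,p_1,u)$ and $(\beta,p_2,v)$.

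I expect the main obstacle to be bookkeeping rather than anything conceptual: keeping straight, through the substitution and the add-and-subtract step, which of the six scalar factors $|x'|^{-\alpha}$, $|(x^\nu)'|^{-\alpha}$, $|y'|^{-\beta}$, $|(y^\nu)'|^{-\beta}$, $|x-y|^{-\lambda}$, $|x^\nu-y|^{-\lambda}$ pairs with which, so that the three integrals come out with the displayed signs. The single analytic point that genuinely needs care --- and that I would write out in full --- is the absolute-integrability claim of the first step, without which the term-by-term regrouping is not justified; everything after that is elementary algebra with those six factors.
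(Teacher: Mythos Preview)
Your plan is the standard derivation of this type of reflection identity in the moving-plane literature: split over $H_\nu$ and its complement, change variables $y\mapsto y^\nu$ in the complement using $|x-y^\nu|=|x^\nu-y|$ and $|x^\nu-y^\nu|=|x-y|$, subtract, peel off the $v_\nu^{p_2}-v^{p_2}$ piece, and then add-and-subtract to factor the remaining coefficient of $v^{p_2}$. This is correct and is exactly how the identity is obtained in the reference the paper invokes.

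There is, however, nothing in the paper to compare your argument against: the paper does not prove this lemma at all but simply writes ``whose proof can be seen in \cite{CL}'' and moves on. So your proposal is not an alternative to the paper's proof but rather a self-contained replacement for a citation. One small caution: the lemma as printed in the paper contains several evident typos (e.g.\ $|x^\lambda-y|$ for $|x^\nu-y|$, and in the $v$-identity $|(x^\nu)'|^\alpha$ where $|x'|^\beta$ is meant), so when you carry out the bookkeeping you flagged as the main difficulty, trust your own algebra rather than trying to reproduce the displayed formula character-for-character; the structurally correct version is what your split-and-regroup procedure yields.
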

Now we are in a position to prove Theorem \ref{thm6}, the proof will be divided into two steps.

\medskip

\emph{Step 1.} We show that for sufficiently negative $\nu$,
\begin{equation}\label{bb}
u_\nu(x)-u(x)\leq0,\,\,v_\nu(x)-v(x)\leq0,\,\,\,\,\forall \, x\in H_\nu.
\end{equation}
Define
$$H_\nu^u=\{\xi\in H_\nu | u_\nu(x)-u(x)>0\},\,\,\,\,H_\nu^v=\{x\in H_\nu| v_\nu(x)-v(x)>0\}.$$
We only need to verify that $H_\nu^u$ and $Q_\nu^v$ must be empty for sufficiently negative $\nu$.

\medskip

Applying the mean value theorem and Lemma \ref{lemma}, we obtain that for any $x \in H_{\nu}^u$, there holds
\begin{equation}\nonumber\\\begin{split}
u_{\nu}(x)-u(x)&\leq\int_{H_\nu} \big(|x'|^{-\alpha}|x-y|^{-\lambda}|(y^\nu)'|^{-\beta}-|(x^\nu)'|^{-\alpha}|x-y^\nu|^{-\lambda}|(y^\nu)'|^{-\beta}\big)\big(v_{\nu}^{p_2}(y)-v^{p_2}(y)\big)dy\\
&\leq \int_{H_\nu^v}|x|^{-\alpha}|x-y|^{-\lambda}|y'|^{-\beta}\big(v_{\nu}^{p_2}(y)-v^{p_2}(y)\big)dy\\
&\leq q_0\int_{H_\nu^v}|x'|^{-\alpha}|x-y|^{-\lambda}|y'|^{-\beta}v_{\nu}^{p_2-1}(v_{\nu}(y)-v(y))dy
\end{split}\end{equation}
and
\begin{equation}\nonumber\\\begin{split}
v_{\lambda}(x)-v(x)&\leq \int_{H_\lambda} \big(|x|^{-\beta}|x-y|^{-\lambda}|(y^\nu)'|^{-\alpha}-|(x^\nu)'|^{-\beta}|x-y^\nu|^{-\lambda}|(y^\nu)'|^{-\alpha}\big)\big(u_{\nu}^{p_1}(y)-u^{p_1}(y)\big)dy\\
&\leq \int_{H_\nu^u}|x'|^{-\beta}|x-y|^{-\lambda}|y'|^{-\alpha}\big(u_{\nu}^{p_1}(y)-u^{p_0}(y)\big)dy\\
&\leq p_0\int_{H_\nu^u}|x'|^{-\beta}|x-y|^{-\lambda}|y'|^{-\alpha}u_{\nu}^{p_1-1}(y)(u_{\nu}(y)-u(y))dy.
\end{split}\end{equation}
Thanks to $(u,v)\in L^{p_0+1}(\mathbb{R}^{m+n})\times L^{p_2+1}(\mathbb{R}^{m+n})$ and integral inequality \eqref{HSWF}, we derive that
\begin{equation}\label{A1}\begin{split}
\|u_\lambda-u\|_{L^{p_1+1}(H^u_\nu)}&\leq p_2 \|v_\nu\|_{L^{p_2+1}(Q_{\lambda})}^{p_2-1}\|v_{\nu}-v\|_{L^{p_2+1}(H_{\nu}^v)}
\end{split}\end{equation}
and
\begin{equation}\label{A2}\begin{split}
\|v_\nu-v\|_{L^{p_2+1}(H^v_\nu)}&\leq p_1 \|u_\nu\|_{L^{p_1+1}(H_{\nu})}^{p_1-1}\|u_{\nu}-u\|_{L^{p_1+1}(H_{\nu}^u)}.
\end{split}\end{equation}
This together with  \eqref{A1} and \eqref{A2} yields that
\begin{equation}\label{A3}\begin{split}
\|u_\nu-u\|_{L^{p_1+1}(H^u_\nu)}\leq p_1p_2 \|v_\lambda\|_{L^{p_2+1}(H_{\nu})}^{p_2-1}\|u_\nu\|_{L^{p_1+1}(H_{\nu})}^{p_1-1}\|u_{\nu}-u\|_{L^{p_1+1}(H_{\nu}^u)}
\end{split}\end{equation}
and
\begin{equation}\label{A4}\begin{split}
\|v_\nu-v\|_{L^{p_2+1}(H^v_\nu)}\leq p_1q_1 \|v_\nu\|_{L^{p_2+1}(H_{\nu})}^{p_2-1}\|u_\nu\|_{L^{p_1+1}(H_{\nu})}^{p_1-1}\|v_{\nu}-v\|_{L^{p_2+1}(H_{\nu}^u)}
\end{split}\end{equation}

\medskip

In virtue of the conditions $(u,v)\in L^{p_1+1}(\mathbb{R}^{m+n})\times L^{p_2+1}(\mathbb{R}^{m+n})$, we can choose sufficiently negative $\nu$ such that
\begin{equation}\label{A5}\begin{split}
\|u_\nu-u\|_{L^{p_1+1}(H^u_\nu)}\leq \frac{1}{2}\|u_\nu-u\|_{L^{p_1+1}(H^u_\nu)},\ \ \|v_\nu-v\|_{L^{p_2+1}(H^v_\nu)}\leq
\frac{1}{2} \|v_\nu-v\|_{L^{p_2+1}(H^v_\nu)},
\end{split}\end{equation}
which implies that $H_{\nu}^u$ and $H_{\nu}^v$ must be  must be empty sets.

\medskip

\emph{Step 2.} The inequality \eqref{bb} provides a starting point for moving plane.
Now we start from the negative infinity of $x_1$-axis and move the plane to the right as long as \eqref{bb} holds. Define
$$\nu_0=\sup\{\nu\  |\  u_\mu(x)\leq u(x),\,\,v_\mu(x)\leq v(x),\,\,\mu\leq\nu,\,\,\forall\  x\in H_\mu\ \}.$$
We will show that
$\\nu_0=0$.
Suppose on the contrary that $\nu_0<0$, we will show that $u$ and $v$ must be symmetric about the plane $x_1=\nu_0$, namely
\begin{equation}\label{ba}
u_{\nu_0}(x)\equiv u(x),\,\,\,v_{\nu_0}(x)\equiv v(x),\,\,\,\forall\  x\in  H_{\nu_{0}}.
\end{equation}
Otherwise, we may assume on $H_{\nu_0}$,
\begin{equation}\nonumber
u_{\nu_0}(x)\leq u(x),\,\, v_{\nu_0}(x)\leq v(x),\,\,\text{but}\,\,u_{\nu_0}(x)\not\equiv u(x)\,\,\text{or}
\,\,v_{\nu_0}(x)\not\equiv v(x).
\end{equation}
In the case of $u_{\nu_0}(\xi)\not\equiv u(x)$ on $H_{\nu_0}$, one can employ Lemma \ref{lemma} to obtain $u_{\nu_0}(x)<u(x)$ and $v_{\lambda_0}(x)<v(x)$ in the interior of $H_{\nu_0}$.

\medskip

Next, we will show that the plane can be moved further to the right. That is to say, there exists an $\varepsilon>0$ such that
for any $\nu\in[\nu_0,\nu_0+\varepsilon)$,
\begin{equation}\label{move}
u_{\nu}(x)\leq u(x) ,\,\,v_{\nu}(x)\leq v(x) ,\,\,\,\forall\  x\in H_{\nu}.
\end{equation}
Set
$$\overline{H_{\nu_0}^u}=\{x\in H_{\nu_0}|u(x)\geq u_{\nu_0}(\xi)\},\,\,\,
\overline{Q_{\nu_0}^v}=\{x\in H_{\nu_0}|v(x)\geq v_{\nu_0}(x)\}.$$
It is clear that both $\overline{H_{\nu_0}^u}$ and $\overline{H_{\nu_0}^v}$ must be measure zero and
$$\lim_{\nu\to \nu_0}H_\nu^u\subset\overline{H_{\nu_0}^u}\ \ \ \lim_{\nu\to \nu_0}
Q_\nu^v\subset\overline{H_{\nu_0}^v}.$$ Combining this and
integrability conditions $(u,v)\in L^{p_1+1}(\mathbb{R}^{m+n})\times L^{p_2+1}(\mathbb{R}^{m+n})$, one can choose $\varepsilon$ small enough such that
for all $\lambda\in[\lambda_0,\lambda_0+\varepsilon)$, there holds
\begin{equation}\nonumber\\\begin{split}
p_1p_2\|u\|_{L^{p_1+1}(H_{\nu}^u)}^{p_1-1}\|v\|_{L^{p_2+1}(H_{\nu}^v)}^{p_2-1}\leq \frac{1}{2}.
\end{split}\end{equation}
As what we did in the estimates of \eqref{A5}, one can obtain
$$\|u-u_\nu\|_{L^{p_1+1}(H_\nu^u)}=\|v-v_\nu\|_{L^{p_2+1}(H_\nu^v)}=0,$$
which implies that $H_\nu^u$ and $H_\nu^v$ must be measure zero.
\medskip

Finally, we show that the plane cannot stop before touching the origin. We argue this by contradiction. Suppose the plane stops at $x_1=\nu_0<0$, according to the above argument, we know that $u$ and $v$ must be symmetric about the plane $x_1=\nu_0$, that is to say
\begin{equation*}
u_{\nu_0}(x)\equiv u(x),\,\,\,v_{\nu_0}(x)\equiv v(x),\,\,\,\forall\  x \in  H_{\nu_{0}}.
\end{equation*}
Since $|x-y^{\nu_0}|^{-\lambda}<|x-y|^{-\lambda}$, $|x^{\nu_0}-y|^{-\lambda}<|x-y|^{-\lambda},$ $|(x^{\nu_0})'|>|x'|$ and $|(y^{\nu_0})'|>|y'|$, we derive that
\begin{equation*}\begin{split}
u_{\nu_0}(x)-u(x)&=\int_{H_{\nu_{0}}}(|(x^{\nu_0})'|^{-\alpha}|x^{\nu_0}-y|^{-\lambda}-|x'|^{-\alpha}|x-y|^{-\lambda})v^{p_2}(y)|y'|^{-\beta}dy\\
&\ \ +\int_{H_{\nu_0}}(|(x^{\nu_0})'|^{-\alpha}|x^{\nu_0}-y^{\nu_0}|^{-\lambda}-|x'|^{-\alpha}|x-y^{\nu_0}|^{-\lambda})v_\lambda^{p_2}(y)|(y^{\nu_0})'|^{-\beta}dx\\
&< \int_{H_{\nu_{0}}}(|x'|^{-\alpha}|x^{\nu_0}-y^{\nu_0}|^{-\lambda}-|x'|^{-\alpha}|x-y|^{-\lambda})v^{p_2}(y)|y'|^{-\beta}dy\\
&\ \ +\int_{H_{\nu_0}}(|x'|^{-\alpha}|x-y|^{-\lambda}-|x'|^{-\alpha}|x^{\nu_0}-y|^{-\lambda})v^{p_2}(x)|(y^{\nu_0})'|^{-\beta}dy\\
&=\int_{H_{\nu_0}}|x'|^{-\alpha}\big(|x-y|^{-\lambda}-|x^{\nu_0}-y|^{-\lambda})v^{p_2}(y)(|(y^{\nu_0})'|^{-\beta}-|y|^{-\beta})dy\\
&<0,
\end{split}\end{equation*}
which leads to a contradiction. Then it follows that $\nu_0=0$, $u_{0}(x)\leq u(x)$ and  $v_{0}(x)\leq v(x)$.
Carrying out the above procedure in the opposite direction, one can also obtain the moving plane must stop before the origin. Hence,
we have $u_{0}(x)\geq u(x)$ and  $v_{0}(x)\geq v(x)$. Since the $x_1$ direction can be replace by
the $x_i$ direction for $i=1,2,\cdot\cdot\cdot m$, we deduce that $u(x)|_{\mathbb{R}^m\times \{0\}}$ and $v(x)|_{\mathbb{R}^m\times \{0\}}$ must be radially symmetric and monotone decreasing about the origin in $\mathbb{R}^m$. This accomplishes the proof of Theorem \ref{thm6}.

{\bf Acknowledgement:} The authors wish to thank the referees for their helpful comments which improve the exposition of the paper.

\end{document}